         
 \title{\large{\textbf   
 {A RAY-KNIGHT THEOREM FOR $\nabla \phi$ INTERFACE MODELS AND SCALING LIMITS
 }}}

\date{}

\documentclass[11pt, leqno]{article}
\usepackage[english]{babel}
\usepackage[a4paper,left=3.5cm,right=3.5cm,top=4cm,bottom=4cm]{geometry}
\usepackage{amsmath}
\usepackage{amssymb}
\usepackage{amsfonts}
\usepackage{amsthm}
\usepackage{amsfonts, amsmath, amssymb, amsthm, mathrsfs}
\usepackage{graphicx}
\usepackage{color}
\usepackage[sans]{dsfont}
\usepackage{constants}
\usepackage{verbatim}
\usepackage{ esint }

\let\oldbibliography\thebibliography
\renewcommand{\thebibliography}[1]{\oldbibliography{#1}
\setlength{\itemsep}{-2pt}} 





\usepackage{color}

\usepackage{comment}
\usepackage[hang,flushmargin]{footmisc}

\newconstantfamily{c}{symbol=c}

\numberwithin{equation}{section}

\newtheorem{thm}{Theorem}[section]
\newtheorem{lem}[thm]{Lemma}
\newtheorem{proposition}[thm]{Proposition}

\newtheorem{corollary}[thm]{Corollary}

\theoremstyle{remark}
\newtheorem{rmk}[thm]{Remark}



\renewcommand{\P}[0]{\mathbb{P}}

\DeclareMathOperator{\E}{\mathbb{E}}

\newtheorem{remark}[thm]{Remark}




\newcommand{\supp}{\operatorname{supp}}

\newcommand{\R}{\mathbb R}

\newcommand{\Z}{\mathbb Z}


\usepackage{mathabx}
\emergencystretch=1em
\oddsidemargin -1mm
\evensidemargin -8mm
\topmargin -12mm
\textheight 680pt
\textwidth 455pt

\usepackage
{hyperref}


\usepackage{etex}
\usepackage{vwcol}
\setlength{\fboxsep}{7pt}

\usepackage{todonotes}
\setlength{\marginparwidth}{2.7cm}


\newenvironment{tight_enumerate}{
\begin{enumerate}
  \setlength{\itemsep}{3pt}
  \setlength{\parskip}{0pt}
}{\end{enumerate}}

\usepackage{titlesec}
\titleformat{\subsection}[runin]{\normalfont\bfseries}{\thesubsection.}{.5em}{}[.]\titlespacing{\subsection}{0pt}{2ex plus .1ex minus .2ex}{.8em}

\begin{document}

\maketitle

\begin{center}
\vspace{-1.4cm}
Jean-Dominique Deuschel$^1$ and Pierre-Fran\c cois Rodriguez$^2$ 

\vspace{1.0cm}
\textbf{Abstract}
\vspace{0.6cm}

\begin{minipage}{0.9\textwidth}
\noindent  We introduce a natural measure on bi-infinite random walk trajectories evolving in a time-dependent environment driven by the Langevin dynamics associated to a gradient Gibbs measure with convex potential. 
We derive an identity relating the occupation times of the Poissonian cloud induced by this measure to the square of the corresponding gradient field, which -- generically -- is \textit{not} Gaussian. In the quadratic case, we recover a well-known generalization of the second Ray-Knight theorem.  
We further determine the scaling limits of the various objects involved in dimension $3$, which are seen to exhibit homogenization. In particular, we prove that the renormalized square of the gradient field converges under appropriate rescaling to the Wick-ordered square of a Gaussian free field on $\R^3$ with suitable diffusion matrix, thus extending a celebrated result of Naddaf and Spencer regarding the scaling limit of the field itself.

\end{minipage}
\end{center}

\thispagestyle{empty}

\vspace{6.5cm}

\begin{flushleft}

$^1$Institut f\"ur Mathematik \hfill August 2023 \\
Technische Universit\"at Berlin \\
Stra\ss e des 17. Juni, 136 \\
D-10623 Berlin, Germany\\
\texttt{deuschel@math.tu-berlin.de}

\bigskip
\bigskip

$^2$Imperial College London \\  180 Queen's Gate \\ South Kensington Campus \\ SW7 2AZ London, UK \\ \texttt{p.rodriguez@imperial.ac.uk}

\end{flushleft}


\newpage
\thispagestyle{empty}

\section{Introduction}

Random-walk representations and isomorphism theorems have a long history in mathematical physics and probability theory, going back at least to works of Symanzik \cite{S69}, Ray \cite{zbMATH03192471} and Knight \cite{zbMATH03194172}, among others; 
we refer to the monographs \cite{zbMATH00108044, MR06,LJ11,Sz12b} and references therein for a more exhaustive overview. Recent developments, not captured by these references, include signed versions of some of these identities and their characterization through cluster capacity observables, see \cite{zbMATH06603570,zbMATH07089008,Sz15.2,drewitz2021cluster}, continuous extensions in dimension two \cite{zbMATH07202500,aidekon2021multiplicative}, applications to percolation problems in higher dimensions \cite{zbMATH06603570,drewitz2021critical}, to cover times, see e.g.~\cite{zbMATH06288284,zbMATH06051273,zbMATH07206369,abe2019exceptional}, and generalizations to different target spaces \cite{zbMATH07145320,zbMATH07374666,kassel2020covariant,lupu2021topological}, with ensuing relevance e.g.~to the study of reinforced processes.



In the present article, we investigate similar questions for a broader class of (generically) non-Gaussian scalar gradient models introduced by Brascamp, Lebowitz and Lieb in \cite{zbMATH03556982}, which have received considerable attention, see~\cite{zbMATH04157659,NaSp97,FS97,GOS01, zbMATH07492701} and further references below, in particular, Remark~\ref{R:finitevol},(2).
In a sense, our findings assess the ``stability'' of such identities under gradient perturbations. 

We now explain our main results, which appear in Theorems~\ref{T:isom1} and~\ref{T:limit_2} below. We consider the lattice $\Z^d$, for $d \geq 3$, and for $\varphi: \Z^d \to \R$ 
the (formal) Hamiltonian
\begin{equation}
\label{eq2isom1-1}
H(\varphi)\stackrel{\text{def.}}{=} \frac12 \sum_{|x-y|=1 } U(\varphi_x-\varphi_y), 
\end{equation}
where the sum ranges over $x,y \in \Z^d$ and $|\cdot|$ denotes the Euclidean norm.
We will assume for simplicity (but see Remark~\ref{R:generalizations},(3)~below with regards to relaxing the assumptions on $U$) that
\begin{align}
&\text{$U$ is even, }
U\in C^{2,\alpha}(\R), \text{ for some $\alpha > 0$ and }
\Cl[c]{c:ellipt} \leq U'' \leq \Cl[c]{C:ellipt} \label{eq2isom2.0},
\end{align}
for some $ \Cr{c:ellipt}, \Cr{C:ellipt} \in (0,\infty)$. We write $E = \R^{\Z^d}$, endowed with the corresponding product $\sigma$-algebra $\mathcal{F}$ and corresponding canonical coordinate maps $\varphi_x : E \to \R$ for $x \in \Z^d$. We then consider, for finite $\Lambda \subset \Z^d$ and all $\xi \in E$, the probability measure on $(E, \mathcal{F})$ defined as
\begin{equation}
\label{eq:finvol}
\begin{split}
&\mu_{\Lambda}^{\xi}(d\varphi)= (Z_{\Lambda}^{\xi})^{-1} \exp \{ - H_{\Lambda}(\varphi)\} \prod_{x\in \Lambda} d\varphi_x \prod_{x\in \Z^d \setminus \Lambda} \delta_{\xi_x}(\varphi_x),
\end{split}
\end{equation}
where $H_{\Lambda}$ is obtained from $H$ by restricting the summation in \eqref{eq2isom1-1} to (neighboring) vertices $x,y$ such that $\{x,y\} \cap \Lambda \neq \emptyset$. The condition \eqref{eq2isom2.0} guarantees in particular that \eqref{eq:finvol} is well-defined. 

Associated to this setup is a Gibbs measure $\mu$ on $(E, \mathcal{F})$, defined as the weak limit
\begin{equation}
\label{eq:Gibbs}
\mu \stackrel{\text{def.}}{=}
 \lim_{\varepsilon \downarrow 0} \lim_{N \to \infty} \mu_{\Lambda_N ,\varepsilon}^{\text{per}},
\end{equation}
where $\mu_{\Lambda_N,\varepsilon}^{\text{per}}$ refers to the analogue of the finite-volume measure in \eqref{eq:finvol} with $\Lambda_N = (\Z/ 2^N \Z)^d$ (periodic boundary conditions) and with $H_{\Lambda}(\varphi)$ replaced by the massive Hamiltonian $H_{\Lambda}(\varphi)+ \frac\varepsilon2 \sum_{x \in \Lambda} \varphi_x^2$, $\varepsilon > 0$. Indeed combining the Brascamp-Lieb inequality \cite{MR0450480,zbMATH03556982} and the bounds of~\cite{DD05}, one classically knows that the measures $\mu_{\Lambda_N ,\varepsilon}^{\text{per}}$ are tight in $N$ and their subsequential limits tight in $\varepsilon$, hence the limits in \eqref{eq:Gibbs} exist, possibly upon possibly passing to appropriate subsequences. The Gibbs property of $\mu$ is the fact that, for any finite set $\Lambda \subset \Z^d$, with $\mathcal{F}_{\Lambda}= \sigma(\varphi_x : x \in \Z^d \setminus \Lambda)$, 
\begin{equation}
\label{eq:DLRmu}
\begin{split}
&\mu (\, \cdot \, | \mathcal{F}_{\Z^d \setminus \Lambda})(\xi)= \mu_{\Lambda}^{\xi} (\cdot), \text{ $ \mu(d\xi)$-a.s.}
\end{split}
\end{equation}

The measure $\mu$ will be the main object of interest in this article. We use $E_{\mu}[\cdot]$ to denote expectation with respect to $\mu$ in the sequel. By construction, $\mu$ is translation-invariant, ergodic with respect to the canonical lattice shifts $\tau_x: E \to E$, $x \in \Z^d$, and $E_{\mu}[\varphi_x]=0$ for all $x \in \Z^d$. 

As will turn out, our scaling limit results require probing squares of the canonical field $\varphi$ under $\mu$ in a sequence of growing finite subsets exhausting $\Z^d$, thus leading to generating functionals that involve tilting the measure $\mu$ by both linear and quadratic functionals of the field. We now introduce these measures, which are parametrized by a function $h$ and a (typically) signed potential $V$, with corresponding Hamiltonian (cf.~\eqref{eq2isom1-1})
\begin{equation}
\label{eq2isom1}
H^{h,V}(\varphi)\stackrel{\text{def.}}{=}H(\varphi) - \sum_x h(x) \varphi_x - \frac12 \sum_x V(x) \varphi_x^2
\end{equation}
(the minus signs are a matter of convenience), where 
\begin{align}
&  h,V: \Z^d\to \mathbb{R} \text{ have finite support and } \Vert V_+\Vert_{\infty} \cdot\text{diam}(\text{supp}(V_+))^{2} <  \lambda_0. \label{eq2isom2}
\end{align}
Here, $\lambda_0 =c(d,  \Cr{c:ellipt} ) \in (0,\infty)$, where $V_+= \max\{V,0\} $ is the positive part of $V$, $\text{supp}(V)=\{ x \in \Z^d: V(x) \neq 0\}$ and $\text{diam}$ refers to the $\ell^{\infty}$-diameter of a set; see Remark~\ref{R:BL},(2)  below regarding the choice of $\lambda_0$. Under~\eqref{eq2isom2}, we introduce the probability measure $\mu_{h,V}$  on $(E, \mathcal{F})$ defined by
\begin{equation}
\label{eq:Gibbs_tilt}
\frac{d\mu_{h,V}}{d\mu} = Z_{h,V}^{-1} \exp\Big\{ \sum_x h(x) \varphi_x + \frac12 \sum_x V(x) \varphi_x^2  \Big\}
\end{equation}
(note in particular that $\mu=\mu_{0,0}$); we refer to Lemma~\ref{L:corBL} and Remark~\ref{R:BL} for matters relating to the tilt in \eqref{eq:Gibbs_tilt} under condition \eqref{eq2isom2}, which, along with \eqref{eq2isom2.0}, we always assume to be in force from here on. The measure $\mu_{h,V}$ is a Gibbs measure for the specification $(U,h,V)$. In case $h=0$, $\mu_{0,V}$ is invariant under $\varphi \mapsto -\varphi$ and has zero mean. Moreover, if $U(\eta)=\frac12\eta^2$, then $\mu_{h,V}$ is the Gaussian free field on $\Z^d$ (with `mass' $V$ when $V \leq 0$ and non-zero mean unless $h \equiv 0$). 

We now introduce certain dynamics corresponding to the above setup, which will play a central role in this article. One naturally associates to $\mu_{h,V}$ in \eqref{eq:Gibbs_tilt} a diffusion $\{\varphi_t : t \geq 0\}$ on $E$
attached to the Dirichlet form
\begin{equation}
\label{eq2isom3}
 \mathcal{E}_1(f,f)= \int_E \|\nabla f\|^2d\mu_{h,V}=\int_E f(-L_1)fd\mu_{h,V},
 \end{equation}
 with domain in $L^2(\mu_{h,V})$, which is the domain of the closed self-adjoint extension of the second order elliptic operator $L_1$ in $L^2(\mu_{h,V})$, where
\begin{equation}
\label{eq2isom4}
L_1 f (\varphi) \equiv L_1^{h,V} f (\varphi) = e^{H^{h,V}(\varphi)}\sum_{x} \frac{\partial}{\partial \varphi_x}\left[ e^{-H^{h,V}(\varphi)} \frac{\partial f}{\partial \varphi_x} \right],
\end{equation}
which acts on local (i.e. depending on finitely many coordinates) smooth bounded functions $f:E\to \mathbb{R}$ such both $\frac{\partial f}{\partial \varphi_x}$ and $\frac{\partial^2 f}{\partial \varphi_x^2} $ are bounded. The assumptions \eqref{eq2isom2.0},\eqref{eq2isom2} ensure that the construction of $\{\varphi_t : t \geq 0\}$ falls within the realm of standard theory; indeed
$\{\varphi_t : t \geq 0\}$ is obtained as a solution to the system of SDE's
\begin{equation}\label{eq:langevin}
d\varphi_t(x) = \Big\{ -\sum_{y:\, |y-x|=1} U'(\varphi_t(x)-\varphi_t(y)) + V(x) \varphi_t(x) + h(x) \Big\} dt +\sqrt{2} dW_t(x) , \quad x \in \Z^d
\end{equation}
with appropriate initial conditions in $\{ \varphi \in E: \sum_x |\varphi_x|^2 e^{-\lambda |x|}< \infty \text{ for some } \lambda>0 \}$, where $(W_t(x))_{x \in \Z^d}$ is a family of independent standard Brownian motions. The relevant drift terms in \eqref{eq:langevin} are globally Lipschitz and guarantee the existence of a unique solution for the associated martingale problem~\cite{MR591802}. 

For a fixed realization of $\varphi \in E$, we then consider the symmetric weights $a(\varphi)=\{a (x,y; \varphi) : x,y \in \Z^d \}$ given by
\begin{equation}
\label{eq2isom5}
a(x,y; \varphi)=a(y,x; \varphi)=U''(\varphi_x-\varphi_y)1_{\{|x- y|=1\}}.  
\end{equation}
With this, we define the (quenched) Dirichlet form associated to the weights $a(\varphi)$ as
\begin{equation}
\label{eq2isom7}
\mathcal{E}_2^{a(\varphi)}(f,f)=\frac12\sum_{x,y}a(x,y; \varphi)(f(x)-f(y))^2=\sum_x f(x)(-L^{a(\varphi)})f(x)
\end{equation}
for suitable $f \in \ell^2( \Z^d)$ (e.g.~having finite support) and  
\begin{equation}
\label{eq2isom8}
L_2^{a(\varphi)}f(x) =\sum_{y}a(x,y; \varphi)(f(y)-f(x)), \text{ for } x \in \Z^d.
\end{equation}
The assumptions \eqref{eq2isom2} ensure that the weights \eqref{eq2isom5} are uniformly elliptic and the construction of the corresponding Markov chain on $\Z^d$ is standard. We will be interested in the evolution of the process $ \overline{X}_t = (X_t,\varphi_t)$ on $\Z^d\times E$ generated by 
\begin{equation}
\label{eq2isom9}
{L} f(x,\varphi) \equiv {L}^{h,V} f(x,\varphi)= L_1^{h,V} f(x,\cdot)(\varphi)+L_2^{a(\varphi)}f(\cdot,\varphi)(x),
\end{equation}
for suitable $f$, and the corresponding Dirichlet form with domain $\mathcal{D}(\mathcal{E})$ in $L^2(\rho_{h,V})$, where $\rho_{h,V}= \kappa \times \mu_{h,V}$, with $\kappa$ counting measure on $\Z^d$, given by
\begin{equation}
\label{eq2isom10}
\begin{split}
\mathcal{E}(f,f)&=\int  f(-L)f d\rho_{h,V}\\
&= \sum_x\mathcal{E}_1(f(x,\cdot),f(x,\cdot))+
\int_E \mathcal{E}_2^{a(\varphi)}(f(\cdot,\varphi),f(\cdot,\varphi))\mu_{h,V}(d\varphi).
\end{split}
\end{equation}
Note in particular that $L$ is symmetric with respect to $\rho_{h,V}$, that is, for suitable $f$ and $g$,
\begin{equation}
\label{eq2isom10.1}
\int  f(Lg) \, d\rho_{h,V}= \int  (Lf)g \,d\rho_{h,V}.
\end{equation}
In line with above notation, we abbreviate $\rho=\rho_{0,0}$, whence $\rho=\kappa \times \mu$. We write $P_{(x,\varphi)}$ for the canonical law of $\overline{X}_{\cdot}$ started at $(x,\varphi)$. This is a probability measure on the space $\overline W^+$ of right-continous trajectories on $\Z^d \times E$ whose projection on $\Z^d$ escapes all finite sets in finite time.
We use $\theta_t$, $t \geq0$, to denote the corresponding time-shift operators.
It will often be convenient to write, for $f=f(\overline{X}_{\cdot})$ bounded and supported on $\{ X_0 \in A \}$, for some finite $A \subset \Z^d$, 
\begin{equation}
\label{eq2isom31}
E_{\rho_{h,V}} [f]= \sum_x \int_E \mu_{h,V}(d \varphi) E_{(x,\varphi)}[f] \quad \left(=\int \rho_{h,V}(dx,d\varphi) E_{(x,\varphi)}[f] \right). 
\end{equation}
The process $\overline{X}_{\cdot}$ is deeply linked to $\mu_{h,V}$. Indeed, adapting the arguments of \cite{DGI00,GOS01}, one knows that for all functions $F,G:E\to \R$ satisfying a suitable growth condition at infinity, comprising in particular any polynomial expression of an arbitrary finite-dimensional marginal of the field $\varphi$ (which will be sufficient for our purposes), 
\begin{multline}
\label{eq2isomHS}
\text{cov}_{\mu_{h,V}}(F,G)=
\int_0^\infty E_{\rho_{h,V}} \left[ \partial F(\overline{X}_0)  e^{\int_0^t V(X_s) ds} \partial G(\overline{X}_t)  \right] dt \\
= \sum_x E_{\mu_{h,V}} \left[ \partial F(x,\varphi)  (-(L + V )^{-1}   \partial G) (x, \varphi)  \right];
\end{multline}
here $\partial F(x, \varphi) = \partial F(\varphi) / \partial \varphi_x$, for $x \in \Z^d$ and, with a slight abuse of notation, we regard $V$ as the multplication operator $Vf(x,\varphi)=V(x)f(x,\varphi)$, for $f:\Z^d\times E \to \R$. We refer to \cite{DGI00}, Prop.~2.2 and Remark 2.3 for a proof of \eqref{eq2isomHS}; see also \cite{zbMATH01305190}. This formula links covariances associated to the (in general non-Gaussian) random field, $\varphi$, to a certain Markov process, $\overline{X}$. It is thus natural to ask if one has identities resembling the classical isomorphism theorems in the Gaussian case. 

Our first result is that this is indeed the case: we derive one such identity in Theorem~\ref{T:isom1} below, which can be regarded as a generalization of the second Ray-Knight theorem. Namely, for a suitable measure $ \P^{V}_{u}$ which we will introduce momentarily, we prove in Theorem~\ref{T:isom1} that for all $u > 0$ and $V:\Z^d \to \R$ as in \eqref{eq2isom2}, with $\mu$ as in \eqref{eq:Gibbs},
\begin{equation}
\label{eq4isom5}
\begin{split}
& \E^{V}_{u}  \left[ \int \mu(d\varphi)  \exp \left\{  \Big\langle V,  \, \mathcal{L}_{\cdot}+ \frac12 \varphi_{\cdot}^2 \Big\rangle_{\ell^2(\Z^d)}\right\} \right] \\
&\qquad\qquad\qquad \qquad\qquad= \int \mu(d\varphi) \exp \left\{ \left\langle V, \frac12(\varphi_{\cdot} + \sqrt{2u})^2 \right\rangle_{\ell^2(\Z^d)}\right\} .
\end{split}
\end{equation}
The key here is the measure $\P^{V}_{u}$ governing the field $ \mathcal{L}_{\cdot}$, which we now describe in some detail. In a nutshell, $\P^{V}_{u}$ is a Poisson process of trajectories on $\Z^d\times E$ modulo time-shift, whose total number is controlled by the scalar parameter $u>0$: the larger $u$ is, the more trajectories enter the picture. The intensity measure $\nu_u^V$ of this process, constructed in Theorem~\ref{T:nu} below (cf.~also \eqref{eq4isom2}), is roughly speaking the unique natural measure on such trajectories whose forward part evolve like the process $\overline{X}$ generated by $L$ as given by \eqref{eq2isom9}, with a slight twist. Namely, $L$ is \textit{not} simply the generator for the Langevin dynamics associated to $\mu=\mu_{0,0}$. Instead, the potential $V$ in \eqref{eq4isom5} manifests itself as a drift term in the system of SDE's governing the Langevin dynamics in \eqref{eq2isom4}, As it turns out, these dynamics are solutions to the SDE's \eqref{eq:langevin} where $V$ corresponds exactly to the test function in \eqref{eq4isom5} and $h$ is appropriately chosen; see the discussion leading up to \eqref{eq4isom3.bis} and~\eqref{eq4isom2} in Section~\ref{sec:isom} for precise definitions.

 The field $ \mathcal{L}_{\cdot}$ is then simply the cumulated occupation time of the spatial parts of all trajectories in the soup. In case $U$ in \eqref{eq2isom1} is quadratic, the components of $\overline{X}$ decouple, the projection of the process $\P^{V}_{u}$ onto the first coordinate has the law of random interlacements and \eqref{eq4isom5} specializes to the isomorphism theorem of \cite{Sz12c}; see Remarks~\ref{R:Gaussian-intensity} and~\ref{R:Gaussian-ri} below for details. In particular, the construction of the measure $\P^{V}_{u}$ described above entails the interlacement process introduced in \cite{Sz10} as a special case.

The derivation in Theorem~\ref{T:nu} of the intensity measure lurking behind $ \mathcal{L}_{\cdot}$ in \eqref{eq4isom5} involves a patching of several local `charts' (much like the DLR-condition, see Remark~\ref{R:DLR}) and relies on elements of potential theory associated to the process $ \overline{X}$, see Section~\ref{sec:PT}. The two crucial inputs to do the patching are i) a suitable probabilistic representation of the equilibrium measure for space-like cylinders, and ii) reversibility of $\overline{X}$ with respect to $\rho$, which together give rise to a desirable sweeping identity, see Proposition~\ref{P:sweep}. Once Theorem~\ref{T:nu} is shown, the proof of \eqref{eq4isom5} in Theorem~\ref{T:isom1} is essentially obtained as a consequence of a suitable Feynman-Kac formula for a killed version of the (big) process $\overline{X}$ (rather than just $X$).

We refer to Remark~\ref{R:finitevol} below for further comments around isomorphism theorems in the present context of~\eqref{eq2isom1-1}. We hope to return to applications of \eqref{eq4isom5} and other similar formulas, e.g.~with regards to existence of mass gaps, elsewhere~\cite{dr22}.
The utility of identities like \eqref{eq4isom5} for problems in statistical mechanics cannot be over-emphasized, where it can for instance be used as a powerful dictionary between the worlds of percolation and random walks in transient setups, see e.g.~\cite{zbMATH06261888} for early works in this direction, and more recently \cite{zbMATH06603570, Sz15.2, drewitz2018geometry,zbMATH07395561,drewitz2021cluster,drewitz2021critical}.
We also refer to \cite{2016arXiv161202385R, https://doi.org/10.48550/arxiv.2112.12096} for percolation and first-passage percolation in the context of $\nabla \varphi$-models, as well as the references at the beginning of this introduction for a host of other applications. 

A version of our first result, Theorem~\ref{T:isom1}, can also be proved on a finite graph with suitable (wired) boundary conditions, see Remark~\ref{R:finitevol},(1) below. In case $U$ in \eqref{eq2isom1} is quadratic, \eqref{eq4isom5} was proved in \cite{EKMRZ00}, and later extended to infinite volume in \cite{Sz12c} in transient dimensions. We further refer to~\cite{zbMATH07049491} for a pinned version in dimension $2$, to \cite{zbMATH06603570, Sz15.2,prevost2021percolation} for a signed version, to \cite{zbMATH06610651} for an ``inversion'', and to \cite{zbMATH07145320,zbMATH07374666,merkl2019random, chang2020h22} for related findings in the context of certain hyperbolic target geometries. 
Finally, let us mention that, similarly as in the Gaussian case, see \cite{MR2386070} or \cite[Chap.~3]{RIbook2014}, the Poisson process underlying $\mathcal{L}$ in \eqref{eq4isom5} can likely be exhibited as the (annealed) local limit of a random walk on the torus defined similarly as the spatial projection of $\overline{X}$ under $P_{\rho}$ in \eqref{eq2isom31}; we will not pursue this further here.


\medskip

Similar in spirit to works of Le Jan \cite{zbMATH05757657} and Sznitman \cite{Sz13} in the Gaussian case, we then investigate the existence of possible scaling limits for the various objects attached to \eqref{eq4isom5}. Our starting point is the celebrated result of Naddaf-Spencer~\cite{NaSp97} regarding the scaling limit of $\varphi$ itself to a continuum free field $\Psi$, see \eqref{eq:formGFFcont}-\eqref{eq:PSigma} and \eqref{eq:NS2} below (see also Remark~\ref{R:finitevol},(2) for related findings among a vast body of work on this topic), whose covariance function is the Green's function of a Brownian motion with homogenized diffusion matrix $\Sigma$, obtained as the scaling limit of the first coordinate of $\overline{X}$ under diffusive scaling, cf.~\eqref{eq:IP}. Given this homogenization phenomenon for $\varphi$, \eqref{eq4isom5} may plausibly lie in the `domain of attraction' of a limiting Gaussian identity involving~$\Psi$.

Among other things, our second main result addresses this question. Indeed, we prove in Theorem~\ref{T:limit_2} below that, as a random distribution on $\R^3$, cf.~Section~\ref{sec:hom} for exact definitions, and with $\varphi_N(z)=  N^{1/2} \varphi_{\lfloor Nz\rfloor}$, $z \in \R^3$,
\begin{equation}\label{eq:phi^2conv}
\text{$(\, \varphi_N, \, :\varphi_{N}^2:\,)$ under $\mu$ converges in law to $(\, \Psi, \, :\Psi^2:\,)$ as $N \to \infty$},
\end{equation}
(see Theorem~\ref{T:limit_2} below for the precise statement), where $:\varphi_{N}^2: (\cdot) \stackrel{\text{def.}}{=} \varphi_{N}^2(\cdot) - E_\mu[\varphi_{N}^2(\cdot)]$ and $:\Psi^2:$ stands for the Wick-ordered square of $\Psi$, see \eqref{eq:GFFcont_wick1}. Thus, our theorem can be understood as an extension of Naddaf and Spencer's result~\cite{NaSp97} to the simplest possible non-linear functional of the field, i.e.~$\varphi^2$, when $d=3$.

The nonlinearity in \eqref{eq:phi^2conv} is by no means a small issue. The proof of results similar to \eqref{eq:phi^2conv} are already delicate in the Gaussian case, see \cite{Si74,zbMATH05757657,Sz13}, and even more so presently, due to the combined effects of i) the absence of Gaussian tools, and ii) the need for renormalization.

Our approach also yields a new proof in the Gaussian case, which we believe is more transparent. For instance, it avoids the use of determinantal formulas, such as those typically used to express generating functionals like \eqref{eq:tightintro} below --
in fact our proof yields a different representation of such functionals, see \eqref{eq:scalinglimit2}-\eqref{e:A^V} and Remark~\ref{R:finitevol},(3)). We now briefly outline our strategy and focus our discussion on the marginal $:\varphi_{N}^2:$ alone in \eqref{eq:phi^2conv} for simplicity.
We first prove tightness by controlling generating functionals of gradient squares in Proposition~\ref{L:uniformint}, i.e.~for $V \in C_{0}^{\infty}(\R^3)$ and $|\lambda|$ small enough, we obtain uniform bounds of the form
\begin{equation}
\label{eq:tightintro}
\sup_{N \geq 1} E_{\mu}\Big[ \exp\Big\{\lambda  \int :\varphi_{N}^2(z):V(z) \, dz 
\Big\} \Big]< \infty;
\end{equation}
cf.~\eqref{eq:uniformint1} below. This is facilitated through the use of a certain variance estimate, see Lemma~\ref{L:corBL} (in particular \eqref{eq:BLcor2}), which is of independent interest and can be viewed as a consequence of the more classical Brascamp-Lieb estimate \cite{MR0450480}.
 Once \eqref{eq:tightintro} is shown, the task is to identify the limit in \eqref{eq:phi^2conv}. To do so, we first replace $\varphi_{N}$ by a regularized version $\varphi_{N}^{\varepsilon}$, corresponding at the discrete level to the presence of an ultraviolet cut-off in the limit. The removal of the divergence at $\varepsilon > 0$ allows for an application of~\cite{NaSp97}, which together with tightness estimates akin to \eqref{eq:tightintro}, is seen to imply convergence of $(\varphi_{N}^{\varepsilon})^2$. 

To remove the cut-off, the crucial control is the following $L^2$-estimate, derived in Section~\ref{sec:approx}. Namely, we show in Proposition~\ref{P:tightness_phi2.2} that for all $\varepsilon >0$, there exists $c(\varepsilon) \in (1,\infty)$ such that 
\begin{equation}
\label{eq:introL2}
\lim_{ \varepsilon \searrow 0} \sup_{N \ge c(\varepsilon)} \left\Vert  \int V(z) \big[ :(\varphi_{N})^2(z): - :(\varphi_{N}^{\varepsilon})^{2}(z): \big] \,  dz \right\Vert_{L^2(\mu)} =0, \quad (d=3).
\end{equation}
The bound~\eqref{eq:introL2} is obtained as a consequence of the Brascamp-Lieb inequality alone; no further random walk estimates on $\overline{X}$ are necessary. In particular, no gradient estimates on its Green's function are needed, as one might naively expect from the form of \eqref{eq:introL2} on account of \eqref{eq2isomHS}.

The controls \eqref{eq:introL2} are surprisingly strong. For instance, one does not need to tune $\varepsilon$ with $N$ when taking limits in \eqref{eq:phi^2conv}. Rather, one can in a somewhat loose sense first let $\varepsilon \to 0$ then $N \to \infty$ (cf.~Lemmas~\ref{l1}-\ref{l3} below for precise statements) and \eqref{eq:introL2} serves to determine the exact limits of the functionals in \eqref{eq:tightintro}, thus completing the proof. 

Returning to the identity \eqref{eq4isom5}, the result \eqref{eq:phi^2conv} then enables us to directly identify the limit of suitably rescaled occupation times $\mathcal{L}_N$ of $\mathcal{L}$ when $d=3$, and we deduce in Corollary~\ref{T:localtimes} below that $\mathcal{L}_N$ converges in law to the occupation-time measure of a Brownian interlacement with diffusivity $\Sigma$, cf.~\eqref{eq:nu_sigma}--\eqref{oc4} for precise definitions. As in the Gaussian case, the convergence of the associated occupation time measure does not require counter-terms. In particular, the drift term implicit in $\P^{V}_{u}$ generated by the potential $V$, which breaks translation invariance, is thus seen to ``disappear'' in the limit. Further, we immediately recover from this the limiting isomorphism proved in~\cite{Sz13} in the Gaussian case (albeit with non-trivial diffusivity $\Sigma$ stemming from homogenization), see Corollary~7.7 and \eqref{e:isom-cont} below.
In the parlance of renormalization group theory, \eqref{e:isom-cont} is thus seen to be the ``Gaussian fixed point'' of the  identity~\eqref{eq4isom5} for any potential $U$ satisfying \eqref{eq2isom2.0}.

\bigskip

We now describe how this article is organized. In Section~\ref{sec:prelim} we gather various useful preliminary results. To avoid disrupting the flow of reading, some proofs are deferred to an appendix (this also applies to several bounds related to $\varepsilon$-smearing in Sections~\ref{sec:prep}-\ref{sec:denouement}). In Section~\ref{sec:PT}, we develop some potential theory tools for the process $\overline{X}$ with generator $L$, see \eqref{eq2isom9}, and introduce the intensity measure underlying $\P^{V}_{u} $ in \eqref{eq4isom5}. In Section~\ref{sec:isom}, we state and prove the isomorphism, see Theorem~\ref{T:isom1}. Section~\ref{sec:hom} gives precise meaning to our scaling limit result for the renormalized squares of $\varphi$. The statement appears in Theorem~\ref{T:limit_2} and is proved over the remaining two sections \ref{sec:prep}-\ref{sec:denouement}. 
Section~\ref{sec:prep} contains some preparatory work: Sections~\ref{sec:tight} and \ref{sec:approx} respectively deal with matters relating to tightness (cf.~\eqref{eq:tightintro}) and the aforementioned $L^2$-estimate (cf.~\eqref{eq:introL2}), see also Propositions~\ref{L:uniformint} and \ref{P:tightness_phi2.2} below; Section~\ref{sec:approx-2} deals with convergence of the smeared field at a suitable functional level. The actual proof of Theorem~\ref{T:limit_2} then appears in Section~\ref{sec:denouement}, along with its various corollaries, notably the scaling limits of rescaled occupation times (Corollary~\ref{T:localtimes}) and the limiting isomorphism (Corollary~7.7). 

Throughout, $c,c',\dots$ denote positive constants which can change from place to place and may depend implicitly on the dimension $d$. Numbered constants are fixed upon first appearance in the text. The dependence  on any quantity other than $d$ will appear explicitly in our notation.

\bigskip

\textbf{Acknowledgments.} This work was initiated while one of us (JDD) was visiting UCLA, while the other (PFR) was still working there. We both thank Marek Biskup for being the great host he is. PFR thanks TU Berlin for its hospitality on several occasions. We thank M.~Slowik for stimulating discussions at the final stages of this project. Part of this research was supported by the ERC grant CriBLaM. We thank two anonymous referees for the quality of their reviews.

\section{Preliminaries and tilting}
\label{sec:prelim}

In this section we first gather several useful results for the discrete Green's function in a potential~$V$. Lemma~\ref{L:RW_tilt} yields useful comparison bounds for the corresponding heat kernel in terms of the standard (i.e.~ with $V=0$) one under suitable assumptions on $V$. Lemma~\ref{L:GFF-conv} deals with scaling limits of the associated Green's function (and its square). We then discuss key aspects of the $\varphi$-Gibbs measures $\mu_{h,V}$ introduced in \eqref{eq:Gibbs_tilt} (see also \eqref{eq:Gibbs}) under the assumptions \eqref{eq2isom2.0},\eqref{eq2isom2}, including matters relating to existence of $\mu_{h,V}$, which involves exponential tilts with functionals of $\varphi^2$; for later purposes we actually consider general quadratic functionals of $\varphi$, see \eqref{eq:Q-BL-1} and conditions \eqref{eq:Q-BL-2}-\eqref{eq:Q-BL-3}. Some care is needed because the scaling limits performed below will require the tilt to be signed and have finite but arbitrarily large support. We also collect a useful variance estimate, of independent interest, see Lemma~\ref{L:corBL} and in particular~\eqref{eq:BLcor2}, see also Lemma~\ref{L:corBL2} regarding higher moments, which can be viewed as a consequence of the Brascamp-Lieb inequality.

Let $(Z_t)_{t \geq 0}$ denote the continuous-time simple random walk on $\Z^d$ with generator given by \eqref{eq2isom8} with $a\equiv 1$ (amounting to the choice $U(t)=\frac12t^2$ in \eqref{eq2isom5}). We write $P_x$ for its canonical law with $Z_0=x$ and $E_x$ for the corresponding expectation. For $V: \Z^d \to \R$, we introduce the heat kernels
\begin{equation}
\label{eq:q_t}
q_t^V(x,y) = \textstyle  E_x\big[ e^{\int_0^{t} V(Z_s) ds} 1_{\{Z_t=y \}} \big], \quad \text{for } x,y \in \Z^d, \, t \geq 0,
\end{equation}
and abbreviate $q_t=q_t^0$. The corresponding Green's function is defined as
\begin{equation}
\label{eq:srw-g}
g^V(x,y) = \textstyle \int_0^{\infty} q_t^V(x,y)dt, \quad x ,y \in \Z^d
\end{equation}
(possibly $+ \infty$) with $g^0 =g$. We now discuss conditions on $V_+= \max\{ V, 0\}$ guaranteeing good control on these quantities, which will be useful on multiple occasions.
\begin{lem}[$d \geq 3$]\label{L:RW_tilt}
There exists $\varepsilon > 0$ such that, for any $V: \mathbb{Z}^d\to \R$ with 
\begin{equation}\label{eq:Vcond}
\textstyle\{\sup_x V_+(x)\} \textnormal{diam}(\textnormal{supp}(V_+))^{2}< \varepsilon,
\end{equation}
and all $x, y \in \mathbb{Z}^d$, one has:
\begin{align}
& E_x \big[e^{\int_0^{\infty} 4V(Z_t) dt}\big] \leq c (< \infty),\label{eq:RW-exp1}\\
 & q_t^V(x,y)  \leq c' q_{ct}(x,y), \text{ } t \geq 0. \label{eq:RW-exp2}
\end{align}
\end{lem}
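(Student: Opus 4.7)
My plan is to deduce both parts from Khasminskii-type arguments, supported by a uniform Green's function bound and, for \eqref{eq:RW-exp2}, a three-Green's-function (``3G'') type inequality for the heat kernel. Throughout, let $M = \sup_x V_+(x)$, $K = \supp(V_+)$, $R = \diam(K)$, so that \eqref{eq:Vcond} reads $MR^2 < \varepsilon$.

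For \eqref{eq:RW-exp1}, since $V \leq V_+$ pointwise, it suffices to bound $\sup_x E_x[\exp\int_0^\infty 4 V_+(Z_s)\,ds]$. By Khasminskii's lemma -- which follows by expanding the exponential and iterating the strong Markov property -- this quantity is at most $(1-\alpha)^{-1}$ provided $\alpha := 4\sup_x E_x[\int_0^\infty V_+(Z_s)\,ds] < 1$. Writing $\sup_x E_x[\int_0^\infty V_+(Z_s)\,ds] = \sup_x \sum_y g(x,y) V_+(y) \leq M \sup_x \ell_K(x)$ with $\ell_K(x) := \sum_{y \in K} g(x,y)$, the key point is that $\sup_x \ell_K(x) \leq C R^2$. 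Indeed, $\ell_K$ is superharmonic on $\Z^d$ (the generator of $Z$ applied to it equals $-1_K$), harmonic off $K$, and vanishes at infinity, so by the maximum principle its supremum is attained on $K$; and for $x \in K$, the transience bound $g(x,y) \leq c|x-y|^{2-d}$ valid in $d \geq 3$ yields $\ell_K(x) \leq g(x,x) + c\sum_{y \in K,\, y \neq x} |x-y|^{2-d} \leq CR^2$ by direct summation. Hence $\alpha \leq 4CMR^2 \leq 4C\varepsilon < 1$ for $\varepsilon$ small enough, yielding \eqref{eq:RW-exp1}.

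For \eqref{eq:RW-exp2}, bound $q_t^V \leq q_t^{V_+}$ and use the bridge representation
\[
q_t^{V_+}(x,y) = q_t(x,y) \cdot E^{\mathrm{br}}_{x,y,t}\Big[\exp\textstyle\int_0^t V_+(Z_s)\,ds\Big],
\]
where $E^{\mathrm{br}}_{x,y,t}$ denotes expectation under the $Z$-bridge from $x$ to $y$ of length $t$. A bridge analogue of Khasminskii's lemma (the bridge is itself Markov, so the same iteration goes through) bounds the exponential moment by $(1-\beta)^{-1}$ provided $\beta := \sup_{x,y,t} E^{\mathrm{br}}_{x,y,t}[\int_0^t V_+(Z_s)\,ds] < 1$. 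Using $V_+ \leq M \cdot 1_K$ and the identity $q_s(x,z)q_{t-s}(z,y)/q_t(x,y) = P^{\mathrm{br}}_{x,y,t}(Z_s = z)$, the bound $\beta \leq CMR^2$ reduces to the \emph{heat-kernel 3G estimate}
\[
\int_0^t \sum_{z \in K} q_s(x,z)\,q_{t-s}(z,y)\,ds \leq C R^2 \, q_t(x,y), \quad x,y \in \Z^d,\ t > 0,
\]
equivalently, the assertion that the expected time spent in $K$ by the bridge is $O(R^2)$ uniformly in $(x,y,t)$. This is where $d \geq 3$ enters essentially: with standard Gaussian bounds on $q_t$ and the (approximate) Gaussianity of $Z_s$ under $P^{\mathrm{br}}_{x,y,t}$ (variance $s(t-s)/t$), I would split the $s$-integral at $s, t-s \asymp R^2$; on this region the bridge density is $O(1)$ on $K$, contributing time $O(R^2)$, while on the complement the Gaussian tail combined with $d/2 > 1$ yields a further $O(R^2)$ contribution. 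Combined with $MR^2 < \varepsilon$, we get $q_t^{V_+}(x,y) \leq C\,q_t(x,y)$, and \eqref{eq:RW-exp2} follows, for instance with $c = 1$.

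The principal technical step is the heat-kernel 3G estimate above: while ultimately a Gaussian computation, its uniformity in $(x,y,t)$ requires some care, and the dependence on $d \geq 3$ is strict. Everything else -- the Khasminskii arguments, the Green's function bound via the maximum principle, and the bridge representation -- is then essentially routine. An equivalent route to \eqref{eq:RW-exp2} iterates Duhamel's formula $q_t^{V_+} = q_t + q \star V_+ \star q_t^{V_+}$; the same 3G estimate controls each convolution term by a factor of $CMR^2 \leq C\varepsilon$, so $q_t^{V_+} \leq \sum_{n \geq 0}(C\varepsilon)^n q_t = q_t/(1-C\varepsilon)$, giving the same conclusion.
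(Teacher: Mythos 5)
Your argument for \eqref{eq:RW-exp1} is correct but follows a different route from the paper: you use Khasminskii's lemma together with the bound $\sup_x \sum_{y\in \supp(V_+)} g(x,y)\leq cR^2$ (maximum principle plus the transient decay $g(x,y)\leq c|x-y|^{2-d}$), whereas the paper decomposes the trajectory into excursions between $\supp(V_+)$ and the complement of a dilated box, using the exponential moment of exit times $\sup_{x,N}E_x[e^{cN^{-2}T_{B_N}}]<\infty$ and the smallness of the return probability $\gamma(\alpha)$ by transience. Both are sound; yours is arguably shorter, the paper's avoids any Green's function computation.

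For \eqref{eq:RW-exp2} there is a genuine gap in your key step. Your whole argument rests on the uniform parabolic ``3G'' estimate $\int_0^t\sum_{z\in K}q_s(x,z)q_{t-s}(z,y)\,ds\leq CR^2\,q_t(x,y)$ for \emph{all} $x,y\in\Z^d$ and $t>0$, and the sketch you give (bridge marginal approximately Gaussian with variance $s(t-s)/t$, split at $s,t-s\asymp R^2$) only makes sense in the diffusive regime $|x-y|\lesssim t$. For the lattice walk with $|x-y|\gg t$ the kernel has Poissonian, not Gaussian, decay, the bridge is ballistic, and the Gaussian lower bound on $q_t(x,y)$ you implicitly divide by is unavailable; even in the Gaussian regime, dividing a Gaussian \emph{upper} bound for $q_s(x,z)q_{t-s}(z,y)$ by a Gaussian \emph{lower} bound for $q_t(x,y)$ leaves a mismatch of constants in the exponent which does not cancel pointwise, and which your insistence on concluding with $c=1$ (no time dilation, although the statement explicitly allows $q_{ct}$) gives you no room to absorb. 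So the ``principal technical step'' you defer is in fact where essentially all of the work lies, and as sketched it would fail off-diagonal. The paper avoids this entirely: it deduces \eqref{eq:RW-exp2} \emph{from} \eqref{eq:RW-exp1} in a few lines, by writing $e^{\int_0^t 2V}\leq \frac12\bigl(e^{\int_0^{t/2}4V}+e^{\int_{t/2}^t 4V}\bigr)$, using time reversal and the Markov property at $t/2$ together with the on-diagonal bound $q_{t/2}\leq c(t\vee1)^{-d/2}$ to get $q_t^{2V}(x,y)\leq c(t\vee1)^{-d/2}$, and then Cauchy--Schwarz $q_t^V\leq (q_t^{2V})^{1/2}q_t^{1/2}$ combined with a lower bound on the heat kernel, exploiting the freedom in $c,c'$. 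Since your part (i) already controls $\sup_x E_x[e^{\int_0^\infty 4V(Z_s)ds}]$, you could graft that reduction onto your proof and dispense with the bridge/3G machinery altogether; alternatively, if you want the stronger pointwise comparison $q_t^{V}\leq Cq_t$, you must prove the occupation bound for the bridge uniformly in the large-deviation regime as well, which requires matching two-sided Poissonian heat kernel estimates and is substantially more than ``a Gaussian computation''.
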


The proof of Lemma~\ref{L:RW_tilt} is deferred to Appendix~\ref{A:pot}.
Now, for smooth, compactly supported $V: \R^d \to \R$ and arbitrary integer $N \geq 1$, consider its discretization (at level $N$)
\begin{equation} \label{eq:V_N}
 V_N(x)= N^{-2} \int_{\frac xN+ [0,1)^d}V(\textstyle \frac zN)dz, \quad x \in \Z^d,
\end{equation}
and the rescaled Green's function
\begin{equation}\label{eq:g_Ndef}
g_N^V(z,z')=  \textstyle\frac1d N^{d-2}  g^{V_N}(\lfloor Nz \rfloor , \lfloor Nz' \rfloor ), \quad z,z' \in \R^d
\end{equation}
with $g^{V_N}$ referring to \eqref{eq:srw-g} with $V_N$ given by~\eqref{eq:V_N}. In accordance with the notation $g=g^0$, cf.~below \eqref{eq:srw-g}, we set $g_N= g_N^0$, whence $g_N(z,z')= \frac1d N^{d-2}  g(\lfloor Nz \rfloor , \lfloor Nz' \rfloor )$. Associated to $g_N^V(\cdot,\cdot)$ in \eqref{eq:g_Ndef} is the rescaled potential operator $G_N^V$
with 
\begin{equation}\label{eq:G_Ndef}
G_N^V f (z)=  \int g_N^{V}(z,z') f(z')dz',
\end{equation} 
for any function $f: \R^d \to \R$ such that $\int g_N^{V}(z,z')^k |f(z')|dz' < \infty$. The operator $(G_N^V)^2$ is defined similarly, with kernel $g_N^{V}(z,z')^2$ in place of $ g_N^{V}(z,z')$ on the right-hand side of~\eqref{eq:G_Ndef}. Finally, we introduce continuous analogues for \eqref{eq:G_Ndef}. Let $W_{z}$, $z\in \R^d$, denote the law of the standard $d$-dimensional Brownian motion $(B_t)_{t \geq 0}$ starting at $z$ and
\begin{equation}
\label{GV}
G^V f(z) =  \int_0^{\infty}  W_z\big[ \textstyle e^{\int_0^t V(B_s) ds} f(B_t)\big] dt.
\end{equation}
for suitable $f$, $V$ (to be specified shortly).
Let $\langle \cdot , \cdot \rangle $ refer to the standard inner product on~$\R^d$.

\begin{lem} \label{L:GFF-conv}For all $f,V \in C^{\infty}_0 (\R^d)$ with $\textnormal{supp}(V) \subset B_L$ for some $L \geq 1$ and $\Vert V \Vert_{\infty} \leq c L^{-2}$,
\begin{align}
\label{eq:GFF-expl2}& \lim_N \, \langle f, G_N^{V} f \rangle= \langle f, G^{V} f \rangle, \quad (d \geq 3)\\
\label{eq:GFF-expl3}& \lim_N \, \langle f, (G_N^{V})^2 f \rangle= \langle f, (G^{V})^2 f \rangle, \quad (d = 3).
\end{align}
\end{lem}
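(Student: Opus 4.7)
The strategy is to reduce both \eqref{eq:GFF-expl2} and \eqref{eq:GFF-expl3} to convergence of time-integrated heat-kernel expressions, and to control these using the invariance principle together with the uniform Gaussian bound $q_t^{V_N}(x,y) \le c' q_{ct}(x,y)$ supplied by Lemma~\ref{L:RW_tilt}. The latter is available here because the hypothesis $\Vert V\Vert_\infty \le cL^{-2}$ together with $\supp(V) \subset B_L$ translates via \eqref{eq:V_N} into $\Vert (V_N)_+ \Vert_\infty \cdot \diam(\supp(V_N)_+)^2 \le c$ uniformly in $N$, with $c$ as small as required, so \eqref{eq:Vcond} holds at every scale.

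For \eqref{eq:GFF-expl2}, I would substitute the Feynman-Kac representation $g^{V_N}(x,y) = \int_0^\infty q_t^{V_N}(x,y)\, dt$ into the definition of $g_N^V$ and rescale time by $t = N^2 s$. Under $P_{\lfloor Nz\rfloor}$, the rescaled walk $N^{-1} Z_{N^2\cdot}$ converges weakly in Skorokhod space to a Brownian motion $B$ under $W_z$; the prefactor $1/d$ in \eqref{eq:g_Ndef} accommodates the diffusivity of the continuous-time simple random walk. Since $V_N(x) \approx N^{-2} V(x/N)$, the Feynman-Kac exponent $\int_0^{N^2 s} V_N(Z_r)\, dr$ converges to $\int_0^s V(B_\sigma)\, d\sigma$; combined with a Riemann-sum approximation of the discrete inner product against $f \otimes f$ (in which the lattice spacing $N^{-d}$ and the time Jacobian $N^2$ jointly absorb the prefactor $N^{d-2}$), this identifies the pointwise-in-$s$ limit as the Brownian heat semigroup in the potential $V$ paired with $f\otimes f$. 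The uniform heat-kernel bound $q_t^{V_N}(x,y) \le c''\, t^{-d/2} e^{-c|x-y|^2/t}$ supplies a dominating function in $s$; $d \ge 3$ ensures integrability at $s \to 0$ and the exponential gives integrability at $s \to \infty$ on $\supp(f)\times\supp(f)$. Dominated convergence then yields \eqref{eq:GFF-expl2}.

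For \eqref{eq:GFF-expl3} the same blueprint applies, but with the kernel replaced by the square $g_N^V(z,z')^2$. Writing this square as a double time integral and rescaling both variables $t_i = N^2 s_i$, the integrand converges pointwise (for $z \ne z'$) to $G^V(z,z')^2$ by the argument of the previous paragraph. The crucial new input is a uniform-in-$N$ domination near the diagonal: the rescaled Gaussian bound yields
\[
g_N^V(z,z')^2 \le c\Bigl(\int_0^\infty s^{-d/2} e^{-c|z-z'|^2/s}\, ds\Bigr)^2 \le c'\, |z-z'|^{-2(d-2)},
\]
which is locally integrable on $\R^d\times\R^d$ precisely when $2(d-2)<d$, i.e.\ $d=3$, explaining the restriction. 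This marginally integrable singularity, combined with the compact support of $f$ and the fast decay of the Gaussian majorant off-diagonal, furnishes the dominating function needed to pass to the limit in the double space integral.

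The main obstacle is the uniform control near the diagonal in the second part: one has to track the small-$s$ (pre-diffusive) contribution to the Feynman-Kac functional very carefully, since the majorant $|z-z'|^{-2}$ in $d=3$ leaves no slack. A local central-limit-type argument is needed to verify both that the majorant above is in fact achieved by $g_N^V(z,z')^2$ uniformly in $N$ and that pointwise convergence to $G^V(z,z')^2$ holds all the way up to the diagonal. Everything else in the plan is essentially a careful bookkeeping of scaling factors and an application of dominated convergence, for which Lemma~\ref{L:RW_tilt} provides the decisive input.
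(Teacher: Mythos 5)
Your proposal is correct and follows essentially the same route as the paper: both check that \eqref{eq:Vcond} holds for $V_N$ uniformly in $N$ so that Lemma~\ref{L:RW_tilt} removes the potential, both use the invariance principle plus a Riemann-sum argument to identify the limit, and both isolate the local square-integrability of $|z-z'|^{2-d}$ as the reason for the restriction $d=3$ in \eqref{eq:GFF-expl3}; the paper merely organizes the limit by truncating the time integral and splitting near/far from the diagonal at scale $1/N$ instead of invoking dominated convergence against the majorant $|z-z'|^{-2(d-2)}$. One small slip that does not affect the argument: in your domination for \eqref{eq:GFF-expl2}, integrability as $s\to\infty$ comes from the factor $s^{-d/2}$ with $d\ge 3$ (the Gaussian factor tends to $1$ there), while the Gaussian factor is what controls the $s\to 0$ regime off the diagonal.
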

In particular \eqref{eq:GFF-expl2}-\eqref{eq:GFF-expl3} implicitly entail that all expressions are well-defined and finite, i.e.~all of $G_N^V$, $G^V$ (and  $(G_N^{V})^2$ when $d=3$) act on $C^{\infty}_0 (\R^d)$ when the potential $V$ satisfies the above assumptions. The proof of Lemma~\ref{L:GFF-conv} is given in Appendix~\ref{A:pot}.

Next, we introduce suitable tilts of the measure $\mu$ defined in \eqref{eq:Gibbs}. The ensuing variance estimates below are of independent interest. 
 We state the following bounds at a level of generality tailored to our later purposes. For real numbers $Q_{\lambda}(x,y)$, $x,y \in \Z^d$, indexed by $\lambda > 0$ (cf.~\eqref{eq:Q-BL-3} below regarding the role of $\lambda$) and vanishing unless $x,y$ belong to a finite set, let
\begin{equation}
\label{eq:Q-BL-1}
Q_{\lambda}(\varphi,\varphi)= \sum_{x,y } Q_{\lambda}(x,y) \varphi_x \varphi_y.
\end{equation}
and write $d \mu_{Q_{\lambda}} = E_{\mu}[e^{ Q_{\lambda}}]^{-1} e^{ Q_{\lambda}} d\mu$ (with $\mu=\mu_{0,0}$) whenever $0< E_{\mu}[e^{ Q_{\lambda}}] < \infty$. Recall $g=g^0$ from \eqref{eq:srw-g} and abbreviate $\partial_x F= \partial F(\varphi) / \partial \varphi_x$ below. The inequality \eqref{eq:BLcor1} below (in the special case where $F$ is a linear combination of $\varphi_x$'s) is due to Brascamp-Lieb, see \cite{MR0450480, zbMATH03556982}.

\begin{lem}[$d \geq 3$, \eqref{eq2isom2.0}, \eqref{eq2isom2}]\label{L:corBL} If, for some $0< \lambda < \Cl[c]{c:Q}$, $x_0 \in \Z^d$, $R \geq 1$, with $B=B(x_0,R)$, 
\begin{align}
&Q_{\lambda}(x,y)=0 \text{ if $x\notin B$ or $y \notin B$ and }\label{eq:Q-BL-2}\\
&Q_{\lambda}(\varphi, \varphi) \leq \lambda R^{-2} \Vert \varphi \Vert_{\ell^2(B)}^2\label{eq:Q-BL-3}, \text{ for all } \varphi \in \mathbb{R}^B,  
\end{align}
then $e^{ Q_{\lambda}} \in L^1(\mu)$ and the following hold: for $F\in C^1(E, \R)$ depending on finitely many coordinates such that $F$ and $\partial_{x} F$, $x \in \Z^d$, are in $L^2(\mu_{Q_{\lambda}})$, one has
\begin{equation}
\label{eq:BLcor1}
\textnormal{var}_{\mu_{Q_{\lambda}}}(F) \leq c\sum_{x,y} g(x,y) E_{\mu_{Q_{\lambda}}}[ \partial_x F \,  \partial_y F].
\end{equation}
If moreover, $F\in C^2(E, \R)$ and $\partial_{x}\partial_{y} F \in L^2(\mu_{Q_{\lambda}})$ for all $x,y \in \Z^d$, then
\begin{equation}
\label{eq:BLcor2}
\textnormal{var}_{\mu_{Q_{\lambda}}}(F) \leq c \sum_{x,y} g(x,y) \Big( E_{\mu_{Q_{\lambda}}}[ \partial_x F] E_{\mu_{Q_{\lambda}}}[  \partial_y F]+ c \sum_{ x',  y'} g( x', y') E_{\mu_{Q_{\lambda}}}[ \partial_{ x'} \partial_xF\, \partial_{ y'}  \partial_yF]\Big).
\end{equation}
\end{lem}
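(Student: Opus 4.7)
The three claims will be handled by successive applications of the Brascamp--Lieb inequality, with the key ingredient being an operator comparison $Q_\lambda \le c\lambda(-\Delta)$ on $\ell^2(\Z^d)$ which upgrades the local bound \eqref{eq:Q-BL-3} via a discrete Hardy inequality. For the integrability of $e^{Q_\lambda}$ under $\mu$, the plan is to observe that the Hamiltonian splits as $H = \frac{\Cr{c:ellipt}}{2}\langle\varphi,-\Delta\varphi\rangle + \widetilde{H}$ with $\widetilde{H}$ convex (because $U'' \ge \Cr{c:ellipt}$), so that $\mu$ is log-concave in a way that permits Brascamp--Lieb comparison against the Gaussian reference $\nu$ with covariance $\Cr{c:ellipt}^{-1}g$ for even convex functionals. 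Decomposing $Q_\lambda = Q_+ - Q_-$ into its positive/negative spectral parts (each of operator norm at most $\lambda R^{-2}$), the trivial bound $e^{Q_\lambda}\le e^{Q_+}$ together with the evenness and convexity of $\varphi \mapsto e^{Q_+(\varphi,\varphi)}$ reduces the matter to the Gaussian moment $E_\nu[e^{Q_+(\varphi,\varphi)}]$, which is a determinant, finite as soon as $\|\Cr{c:ellipt}^{-1} g^{1/2}(2Q_+)g^{1/2}\|_{\textnormal{op}} < 1$. The discrete Hardy inequality in $d \ge 3$, i.e.\ $\|\psi_B\|^2 \le cR^2\langle\psi,-\Delta\psi\rangle$ for all $\psi \in \ell^2(\Z^d)$, delivers exactly the operator inequality $Q_+ \le c\lambda(-\Delta)$ and hence $\|g^{1/2}Q_+ g^{1/2}\|_{\textnormal{op}} \le c\lambda$; choosing $\Cr{c:Q}$ small enough guarantees finiteness.

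For the variance bound \eqref{eq:BLcor1}, the same Hardy-based operator inequality shows that $\mu_{Q_\lambda}$ is log-concave with Hamiltonian Hessian $\nabla^2 H - Q_\lambda \ge (\Cr{c:ellipt} - c\lambda)(-\Delta)$, so the classical Brascamp--Lieb variance inequality applied to $\mu_{Q_\lambda}$ directly yields
$\textnormal{var}_{\mu_{Q_\lambda}}(F) \le c \sum_{x,y} g(x,y)\, E_{\mu_{Q_\lambda}}[\partial_x F\, \partial_y F]$.
In infinite volume this will be justified by passing to the limit from the finite-volume analogues $\mu_{\Lambda,\varepsilon}^{\textnormal{per}}$ tilted by $Q_\lambda$ (where log-concavity of the tilted measure is immediate from the same Hessian computation), with the uniform integrability needed to pass to the limit supplied by the previous step.

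Finally, \eqref{eq:BLcor2} will be obtained by iterating \eqref{eq:BLcor1}. The natural decomposition $E_{\mu_{Q_\lambda}}[\partial_x F\, \partial_y F] = E[\partial_x F]\, E[\partial_y F] + \textnormal{cov}_{\mu_{Q_\lambda}}(\partial_x F, \partial_y F)$ produces the ``tadpole'' contribution $c\sum g(x,y) E[\partial_x F]\, E[\partial_y F]$ from the first summand. For the covariance piece, the spectral factorization $g(x,y) = \sum_u g^{1/2}(x,u)\, g^{1/2}(u,y)$ (available since $g = (-\Delta)^{-1}$ is a positive-semidefinite kernel on $\ell^2(\Z^d)$) lets us rewrite
\begin{equation*}
\sum_{x,y} g(x,y)\, \textnormal{cov}_{\mu_{Q_\lambda}}(\partial_x F, \partial_y F) = \sum_u \textnormal{var}_{\mu_{Q_\lambda}}(G_u), \qquad G_u := \sum_x g^{1/2}(x,u)\, \partial_x F.
\end{equation*}
Applying \eqref{eq:BLcor1} to each $G_u$ and re-summing over $u$, using the factorization $\sum_u g^{1/2}(x,u)g^{1/2}(y,u) = g(x,y)$ twice, produces exactly the ``one-loop'' term in \eqref{eq:BLcor2}. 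The main technical hurdle throughout is upgrading the local bound \eqref{eq:Q-BL-3} on $\ell^2(B)$ to a \emph{global} operator inequality on $\ell^2(\Z^d)$; this is where the assumption $d \ge 3$ becomes essential (so that Hardy is at one's disposal), and the resulting Hardy constant ultimately determines the admissible threshold $\Cr{c:Q}$.
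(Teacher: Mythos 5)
Your treatment of \eqref{eq:BLcor1} and \eqref{eq:BLcor2} is essentially the paper's own argument: the paper also upgrades \eqref{eq:Q-BL-3} to a global form inequality against $-\Delta$ (it does this by a discrete Sobolev inequality on the box plus H\"older, where you invoke a discrete Hardy inequality -- the two are interchangeable here and both are where $d\ge 3$ enters), concludes that the tilted Hamiltonian has Hessian bounded below by $c'(-\Delta)$ uniformly in the volume, applies the classical Brascamp--Lieb variance inequality to the finite-volume tilted measures and passes to the limit, and then obtains \eqref{eq:BLcor2} by exactly your iteration: it writes the right-hand side of \eqref{eq:BLcor1} as $\sum_x E[((-\Delta)^{-1/2}\partial_\cdot F)^2(x)]$, splits each second moment into (first moment)$^2$ plus a variance, and applies \eqref{eq:BLcor1} once more; your factorization $g(x,y)=\sum_u g^{1/2}(x,u)g^{1/2}(u,y)$ is the same computation phrased with kernels instead of spectral calculus.

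Where you genuinely depart from the paper is the integrability claim $e^{Q_\lambda}\in L^1(\mu)$, and this is also where your argument has a soft spot. First, a small slip: \eqref{eq:Q-BL-3} is a one-sided bound, so it controls the eigenvalues of $Q_\lambda$ only from above; hence the positive spectral part satisfies $Q_+\le \lambda R^{-2}\,1_B$, but the negative part $Q_-$ need not have operator norm $\le \lambda R^{-2}$ (this is harmless, since you only use $Q_+$). More seriously, the comparison $E_\mu[e^{Q_+(\varphi,\varphi)}]\le E_\nu[e^{Q_+(\varphi,\varphi)}]$ for the Gaussian reference $\nu$ is \emph{not} what the Brascamp--Lieb inequality gives: Brascamp--Lieb yields the variance bound and moment/exponential-moment comparison for \emph{linear} functionals $\langle a,\varphi\rangle$, while the comparison for general even convex functionals of the full field is a stronger statement (it is Harg\'e's convex comparison theorem for log-concave perturbations of Gaussians). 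You can either cite that result, or avoid it elementarily: linearize via Hubbard--Stratonovich, writing $e^{Q_+(\varphi,\varphi)}=E_\zeta\big[e^{\langle (2Q_+)^{1/2}\zeta,\varphi\rangle}\big]$ for an auxiliary finite-dimensional standard Gaussian $\zeta$, apply the Brascamp--Lieb exponential bound for the linear functional $\langle (2Q_+)^{1/2}\zeta,\cdot\rangle$ (in finite volume, uniformly in the boundary condition, then pass to the limit using \eqref{eq:Gibbs}--\eqref{eq:DLRmu}), and perform the resulting Gaussian integral over $\zeta$, which is finite precisely under your operator-norm condition $\|g^{1/2}Q_+g^{1/2}\|_{\mathrm{op}}\le c\lambda<1$. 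With that repair (or by adopting the paper's shorter route, which deduces integrability directly from the uniform finite-volume variance bounds and the Gibbs property without any Gaussian determinant), your proof is complete.
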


\begin{rmk}\label{R:BL}\begin{enumerate}
\item[(1)] By adapting classical arguments, see e.g.~\cite[Corollary 2.7]{DGI00}, one readily shows that the conclusions of Lemma~\ref{L:corBL} (and thus also of Lemma~\ref{L:corBL2} below) continue to hold if one considers the measure $\mu_{h, Q_{\lambda}}$ with exponential tilt of the form $Q(\varphi, \varphi) + \sum_{x} h(x) \varphi_x$, for arbitrary $h$ as in \eqref{eq2isom2}.

\item[(2)]  In particular, Lemma~\ref{L:corBL} applies with the choice 
\begin{equation}\label{e:Q-diag}Q_{\lambda_0}(x,y)= V(x) 1\{x=y\},
\end{equation} for $V$ as in \eqref{eq2isom2} with $\lambda_0= \Cr{c:Q}$. Indeed with the choice $R=\text{diam}(\text{supp}(V))$, one readily finds $x_0$ such that \eqref{eq:Q-BL-2} is satisfied. Moreover, with $B=B(x_0,R)$,  \eqref{e:Q-diag} yields that
$$Q_{\lambda_0}(\varphi, \varphi) \ {\leq} \ \Vert V_+ \Vert_{\infty} \cdot \Vert \varphi \Vert_{\ell^2(B)}^2 \stackrel{\eqref{eq2isom2}}{\leq} \lambda_0 R^{-2} \Vert \varphi \Vert_{\ell^2(B)}^2, $$
i.e.~\eqref{eq:Q-BL-2} holds. Lemma~\ref{L:corBL} (along with the previous remark) thus implies that the tilted measure $\mu_{h,V}$ introduced in \eqref{eq:Gibbs_tilt} is well-defined and satisfies the estimates \eqref{eq:BLcor1} and \eqref{eq:BLcor2} if $\lambda_0 < \Cr{c:Q}$ in \eqref{eq2isom2}. In fact, in the specific case of \eqref{e:Q-diag}, the same conclusions could instead be derived by combining \eqref{eq2isomHS} with \eqref{eq2isom10.2} below and the heat kernel bound \eqref{eq:RW-exp2}.
\end{enumerate}
\end{rmk}

\begin{proof}[Proof of Lemma~\ref{L:corBL}]
In view of \eqref{eq2isom5}, \eqref{eq2isom8} and \eqref{eq2isom9} and \eqref{eq2isom2}, observe that (with $L=L^{0,0}$ and notation we explain below)
\begin{equation}
\label{eq2isom10.2}
-L \geq -L_2^{a(\varphi)} \geq - \Cr{c:ellipt} \Delta
\end{equation}
as symmetric positive-definite operators (restricted to $\text{Dom}(-\Delta)$, tacitly viewed as a subset of $\R^{\Z^d \times E}$ independent of $\varphi \in E$). Here, ``$A\geq B$'' in \eqref{eq2isom10.2} means that $\langle f , Af \rangle \geq \langle f , Bf \rangle $ for $f \in \text{Dom}(-\Delta) $ where $\langle \cdot,\cdot \rangle$ is the usual $\ell^2(\mathbb{Z}^d)$ inner product; moreover $\Delta f (x) = \sum_{y\sim x}(f(y)-f(x))$, for suitable $f: \Z^d \to \R$ (e.g.~having finite support), so that $(-\Delta)^{-1}1_y(x)=\frac1{2d}g(x,y)$ for all $x,y \in \Z^d$ with $g=g^0$, cf.~\eqref{eq:srw-g}. By assumption on $H$ in \eqref{eq2isom2.0}, it follows that (see below \eqref{eq:finvol} regarding $H_{\Lambda}$) for all $\Lambda \supset \overline{B}$, and $\varphi \in E$
\begin{equation}\label{eq:Q-BL-33}
D^2H_{\Lambda}(\varphi) \stackrel{\eqref{eq2isom10.2}}{\geq} c \langle \varphi , - \Delta \varphi \rangle_{\ell^2(\overline{B})} \geq  \Cl[c]{c:sob} R^{-2} \Vert \varphi \Vert_{\ell^2(B)}^2
\end{equation}
where $D^2 H_{\Lambda}$ refers to the Hessian of $H_{\Lambda}$ and the last bound follows by a discrete Sobolev inequality in the box $B$, as follows e.g.~from Lemma 2.1~in \cite{zbMATH06824408} and H\"older's inequality. Together with \eqref{eq:Q-BL-3}, \eqref{eq:Q-BL-33} implies that whenever $\lambda < \Cr{c:sob}/2 =\Cr{c:Q} $,
$$H_{\lambda}= H- Q_{\lambda}$$
satisfies $D^2 H_{\lambda} \geq c'(-\Delta)$, in the sense that the inequality holds for the restriction of either side to $\ell^2(\overline{\Lambda})$ with a constant $c'$ uniform in $\Lambda$. This implies that the measure
$\nu_{\Lambda}^{\xi} \equiv \mu_{\Lambda, Q_{\lambda}}^{\xi}$ defined as in \eqref{eq:finvol} but with $H_{\lambda}$ in place of $H$ is log-concave and it yields, together with the Brascamp-Lieb inequality, uniformly in $\Lambda$ and $\xi$,
\begin{equation}\label{eq:Q-BL-34}
 \text{var}_{\nu_{\Lambda}^{\xi}}(F ) \leq E_{\nu_{\Lambda}^{\xi}}[\langle \partial_{\cdot} F , (D^2H_{\lambda})_{\Lambda}^{-1}  \partial_{\cdot} F \rangle  ] \leq c E_{\nu_{\Lambda}^{\xi}}[\langle \partial_{\cdot}  F , (-\Delta)_{\Lambda}^{-1}  \partial_{\cdot}  F \rangle  ] 
\end{equation}
 for suitable $F$ (say depending on finitely many coordinates), where $\langle \cdot , \cdot \rangle$ denotes the $\ell^{2}(\overline{\Lambda})$ inner product.
In particular, choosing $F=\varphi_0$ and using that $2d (-\Delta)^{-1}1_y(x) \nearrow g(x,y)< \infty$ as $\Lambda \nearrow \Z^d$, one readily deduces from the resulting uniform bound in \eqref{eq:Q-BL-34} and the Gibbs property \eqref{eq:DLRmu} that $e^{ Q_{\lambda}} \in L^1(\mu)$, and \eqref{eq:BLcor1} then follows upon letting $\Lambda \nearrow \Z^d$ in \eqref{eq:Q-BL-34}.

To obtain \eqref{eq:BLcor2}, one starts with \eqref{eq:BLcor1} and introduces $(-\Delta)^{-1/2} $ (defined e.g.~by spectral calculus) to rewrite the right-hand side of \eqref{eq:BLcor1} up to an inconsequential constant factor as
$$
 E_{\mu_{Q_{\lambda}}}[\langle \partial_{\cdot}  F , (-\Delta)^{-1}  \partial_{\cdot}  F \rangle  ] = \sum_x E_{\mu_{Q_{\lambda}}}[ ((-\Delta)^{-1/2}  \partial_{\cdot}  F)^2(x)]
$$
Writing the second moment on the right-hand side as a variance plus the square of its first moment and applying \eqref{eq:BLcor1} once again to bound $\text{var}_{\mu_{Q_{\lambda}}}( ((-\Delta)^{-1/2}  \partial_{\cdot}  F)(x))$, 
\eqref{eq:BLcor2} follows.
\end{proof}

By iterating \eqref{eq:BLcor1}, one also has controls on higher moments. In view of Remark~\ref{R:BL} above, the following applies in particular to $\mu_{h,V}$ for any $h,V$ as in \eqref{eq2isom2}.

\begin{lem} \label{L:corBL2} Under the assumptions of Lemma~\ref{L:corBL}, for any $V : \Z^d \to \R$ with finite support and all integers $k \geq 0$,
\begin{equation}
\label{e:corBL2.1}
E_{\mu_{Q_\lambda}}[\langle \varphi, V \rangle_{\ell^2}^{2k}] \leq c(2k) \langle V, GV \rangle_{\ell_2}^k.
\end{equation}
where $\langle V, GV \rangle_{\ell_2}= \sum_{x,y}V(x)g(x,y)V(y)$.
\end{lem}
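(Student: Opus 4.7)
The plan is to derive the bound by iterating the Brascamp--Lieb-type variance inequality \eqref{eq:BLcor1} of Lemma~\ref{L:corBL}. Write $F(\varphi) = \langle \varphi, V\rangle_{\ell^2}$, $\sigma^2 = \langle V, GV\rangle_{\ell^2}$ and $m_k = E_{\mu_{Q_\lambda}}[F^k]$. Two preliminary observations drive the argument: first, because $U$ is even (cf.~\eqref{eq2isom2.0}) and $Q_\lambda(\varphi,\varphi)$ is purely quadratic, the measure $\mu_{Q_\lambda}$ is invariant under the involution $\varphi \mapsto -\varphi$, so $m_1=0$; second, the log-concavity of $\mu_{Q_\lambda}$ established in the proof of Lemma~\ref{L:corBL} delivers Gaussian concentration for any linear functional of finitely many coordinates, so all $m_k$ are finite and the regularity assumptions of Lemma~\ref{L:corBL} are met for every power $F^k$ (which is a polynomial in finitely many coordinates).

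I would then run an induction on $k$. The base case $k=1$ follows from \eqref{eq:BLcor1} applied to $F$: since $\partial_x F = V(x)$ and $m_1=0$, one has $m_2 = \textnormal{var}_{\mu_{Q_\lambda}}(F) \le c\sigma^2$. For the inductive step (with $k \geq 2$), apply \eqref{eq:BLcor1} to $F^k$, whose derivative is $\partial_x F^k = k V(x) F^{k-1}$, to get
\[
m_{2k} - m_k^2 \;=\; \textnormal{var}_{\mu_{Q_\lambda}}(F^k) \;\le\; c k^2 \sum_{x,y} g(x,y) V(x) V(y)\, m_{2k-2} \;=\; c k^2 \sigma^2 m_{2k-2}.
\]
The term $m_k^2$ is in turn controlled by Cauchy--Schwarz applied to $F^k = F \cdot F^{k-1}$, which yields $m_k^2 \le m_2\, m_{2k-2} \le c\sigma^2 m_{2k-2}$. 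Combining these two estimates and invoking the inductive hypothesis $m_{2k-2}\le c(2k-2)\sigma^{2k-2}$ produces $m_{2k} \le c(k^2+1)\,c(2k-2)\,\sigma^{2k}$, which is the claim with $c(2k) := c(k^2+1)\,c(2k-2)$.

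No serious obstacle is anticipated. The only point that requires a modicum of care is checking the moment finiteness and $L^2$-integrability hypotheses of Lemma~\ref{L:corBL} at each iteration, but this is automatic from the log-concavity of $\mu_{Q_\lambda}$ already extracted en route to Lemma~\ref{L:corBL}. Handling $m_k^2$ through Cauchy--Schwarz rather than a parity analysis has the advantage of making the recursion uniform in $k$, and in particular spares us from splitting even and odd cases beyond the base case where the symmetry $\varphi \mapsto -\varphi$ is invoked to obtain $m_1=0$.
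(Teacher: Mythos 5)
Your proof is correct and follows essentially the same route as the paper: apply \eqref{eq:BLcor1} to $\langle \varphi, V\rangle_{\ell^2}^{k}$ and induct on the resulting recursion $M(2k) \le M(k)^2 + c k^2 \langle V, GV\rangle_{\ell_2} M(2(k-1))$. The only (minor) difference is how the term $M(k)^2$ is disposed of: the paper notes that odd moments vanish by the symmetry $\varphi \mapsto -\varphi$ and propagates $c(2k)=c(k)^2+ck^2c(2(k-1))$, whereas you bound $m_k^2 \le m_2\, m_{2k-2}$ by Cauchy--Schwarz, invoking the symmetry only for $m_1=0$ in the base case -- both are fine.
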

\begin{proof} 
Abbreviating $M(k)= E_{\mu_{Q_\lambda}}[\langle \varphi, V \rangle_{\ell^2}^{k}]$, one has by \eqref{eq:BLcor1},
\begin{multline}
\label{e:corBL2.2}
M(2k) \leq M(k)^2 + c\sum_{x,y} g(x,y) E_{\mu_{Q_{\lambda}}}\big[ (\partial_x \langle \varphi, V \rangle_{\ell^2}^{k}) \, ( \partial_y \langle \varphi, V \rangle_{\ell^2}^{k})\big] \\
=  M(k)^2 + ck^2  \langle V, GV \rangle_{\ell_2} M(2(k-1)) .
\end{multline}
Defining $c(k)=0$ for odd $k$ and observing that $M(k)$ vanishes for such $k$, \eqref{e:corBL2.1} readily follows from \eqref{e:corBL2.2} and a straightforward induction argument, with $c(2k)=c(k)^2 + ck^2 c(2(k-1)).$
\end{proof}

\section{Elements of potential theory for $\overline{X}_{\cdot}$ and intensity measure}
\label{sec:PT}

For the remainder of this article, we always tacitly assume that conditions \eqref{eq2isom2.0} and \eqref{eq2isom2} are satisfied for the data $(U,h,V)$. In this section, we develop various tools around the process $\overline{X}_{\cdot}$ with generator $L$ given by \eqref{eq2isom9}. Among other things, these will allow us to define a natural intensity measure $ \nu_{h,V}$ on bi-infinite $\Z^d \times E$-valued trajectories, see Theorem~\ref{T:nu} below. This measure is fundamental to the isomorphism theorem derived in the next section.

We start by developing useful formulas for the equilibrium measure and capacity of ``cylindrical'' sets. For $K$ a finite subset of $\Z^d$, abbreviated $K\subset \subset \Z^d$, we write $Q_K = K\times E$ with $E=\R^{\Z^d}$ for the corresponding cylinder and abbreviate $Q_N= Q_{B_N}$, where $B_N=[-N,N]^d\cap \Z^d$ is the discrete box of radius $N$. We use $\partial K$ to denote the inner boundary of $K$ in $\Z^d$ and $K^c =\Z^d \setminus K$. Recalling $\mathcal{E}(\cdot,\cdot)$ from \eqref{eq2isom10} with domain $\mathcal{D}(\mathcal{E})$, we then define the capacity of $Q_K$, for arbitrary $K \subset \subset \Z^d$, as
\begin{equation}
\label{eq2isom30}
\text{cap}(Q_K)= \inf \big\{ \mathcal{E}(f,f) : f\in \mathcal{D}(\mathcal{E}), \,  f(x, \cdot) \geq 1 \text{ for all } x \in K, \, \lim_{|x|\to \infty} f(x,\cdot)=0 \big\}
\end{equation}
(with $\inf \emptyset = \infty$). Note that $\text{cap}\equiv \text{cap}_{h,V}$, $\mathcal{E}\equiv \mathcal{E}_{h,V}$, cf.~\eqref{eq2isom10}, along with various potential-theoretic notions developed in the present section (e.g.~$e_{Q_K}$, $h_{Q_k}$ below), all implicitly depend on the tilt $(h,V)$. In view of \eqref{eq2isom10}, restricting to the class of functions $f (x,\varphi)=f(x)$ satisfying the conditions in \eqref{eq2isom30} but independent of $\varphi$, and observing that $E_{\mu_{h,V}}[a(x,y,\varphi)] \leq \Cr{C:ellipt}$ for $|x- y|=1$ due to \eqref{eq2isom5} and \eqref{eq2isom2.0}, it follows that
\begin{equation}
\label{eq:capfinite}
\text{cap}(Q_K) \leq \Cr{C:ellipt} \cdot \text{cap}_{\Z^d}(K)< \infty \text{ for all }K \subset \subset \Z^d,
\end{equation}
where $ \text{cap}_{\Z^d}(K)$ refers to the usual capacity of the simple random walk on $\Z^d$. Similarly, neglecting the contribution from $\mathcal{E}_1$ and applying Fatou's lemma, one obtains that  
\begin{equation}
\label{eq:capfinite'}
\text{cap}(Q_K) \geq \Cr{c:ellipt} \cdot \text{cap}_{\Z^d}(K) \text{ for all }K \subset \subset \Z^d.
\end{equation}

We now derive a more explicit (probabilistic) representation of $\text{cap}(Q_K)$. Recalling that  $ \overline{X}_t = (X_t,\varphi_t)$ stands for the process associated to $\mathcal{E}$ (with generator $L=L^{h,V}$ given by \eqref{eq2isom9} and canonical law $P_{(x,\varphi)}$, see below \eqref{eq2isom10.1}), we introduce the stopping times $H_{Q_K} =\inf \{ t \geq 0 : \overline{X}_t \in Q_K\}$, let
\begin{equation}
\label{eq2isom11}
h_{Q_K} (x, \varphi)= P_{(x,\varphi)}[H_{Q_K}< \infty], \quad x \in \Z^d, \varphi \in E,
\end{equation}
and introduce, for suitable $f: \Z^d \times E\to \R$ the potential operators
\begin{equation}
\label{eq2isom12}
\overline{U}f(x,\varphi)= E_{(x,\varphi)}\left[\int_0^{\infty} dt f(X_t, \varphi_t)  \right].
\end{equation}

\begin{lem}[$(h,V)$ as in \eqref{eq2isom2}] \label{P:equi}
The variational problem \eqref{eq2isom30} has a unique minimizer given by $f=h_{Q_K}$ with $h_{Q_K}$ as in \eqref{eq2isom11}. Moreover, with
\begin{equation}
\label{eq2isom13}
e_{Q_K}(x,\varphi) \stackrel{\textnormal{def.}}{=} (-L h_{Q_K}) (x,\varphi), \quad x \in \Z^d, \varphi \in E,
\end{equation}
one has that 
\begin{align}
& \label{e:supp-e}\textnormal{supp}(e_{Q_K}) \subset \partial K \times E \, (\subset Q_K),\\
&\label{e:nonneg-e}e_{Q_K} \geq 0,
\end{align}
 and
\begin{equation}
\label{eq2isom31}
\textnormal{cap}(Q_K)
= \sum_{x \in K}  \int_E \mu_{h,V}(d \varphi) e_{Q_{K}}(x,\varphi). 
\end{equation}
\end{lem}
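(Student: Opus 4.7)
The plan is to proceed along classical potential-theoretic lines, adapted to the joint process $\overline{X}_{\cdot}$ on $\Z^d \times E$. First I would identify $h_{Q_K}$ as the $L$-harmonic extension of $1_{Q_K}$ that vanishes at spatial infinity; then compute $-L h_{Q_K}$ explicitly to obtain \eqref{e:supp-e}--\eqref{e:nonneg-e}; and finally run a Dirichlet-principle argument, combining the symmetry \eqref{eq2isom10.1} of $L$ under $\rho_{h,V}$ with the nonnegativity of $e_{Q_K}$, to deduce the minimizing property, uniqueness, and the capacity formula \eqref{eq2isom31}.

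Concretely, the first step is an application of the simple Markov property at a fixed time $t>0$, which gives $h_{Q_K}(x,\varphi) = E_{(x,\varphi)}[h_{Q_K}(\overline{X}_t)]$ for every $(x,\varphi)$ with $x \notin K$ (writing $\{H_{Q_K} < \infty\}$ as the disjoint union of $\{H_{Q_K} \leq t\}$ and $\{H_{Q_K} > t\} \cap \theta_t^{-1}\{H_{Q_K} < \infty\}$). Differentiating at $t = 0^+$ yields $L h_{Q_K} = 0$ on $K^c \times E$, so $\supp(e_{Q_K}) \subset Q_K$. For $x \in K$ one has $h_{Q_K}(x,\cdot) \equiv 1$; if in addition every neighbor $y \sim x$ lies in $K$, then $L_2^{a(\varphi)} h_{Q_K}(\cdot,\varphi)(x) = 0$ and $L_1 h_{Q_K}(x,\cdot) = 0$ (the latter because $h_{Q_K}(x,\cdot)$ is constant in $\varphi$), giving \eqref{e:supp-e}. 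For $x \in \partial K$ only $L_2^{a(\varphi)}$ contributes, and
\[
e_{Q_K}(x,\varphi) \;=\; \sum_{y \sim x,\; y \notin K} a(x,y;\varphi)\bigl(1 - h_{Q_K}(y,\varphi)\bigr) \;\geq\; 0,
\]
which gives \eqref{e:nonneg-e}.

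Combined with the symmetry \eqref{eq2isom10.1} of $L$ on $L^2(\rho_{h,V})$, the above produces, for any admissible $f$ in \eqref{eq2isom30}, the Green-type identity $\mathcal{E}(h_{Q_K}, f) = \int f\, e_{Q_K}\, d\rho_{h,V}$. The choice $f = h_{Q_K}$, together with $h_{Q_K} \equiv 1$ on $\supp(e_{Q_K})$, produces \eqref{eq2isom31} directly, while for an arbitrary admissible $f$ the polarization
\[
\mathcal{E}(f,f) = \mathcal{E}(f - h_{Q_K},\, f - h_{Q_K}) + 2\,\mathcal{E}(h_{Q_K},\, f - h_{Q_K}) + \mathcal{E}(h_{Q_K},\, h_{Q_K})
\]
combined with the bound $\mathcal{E}(h_{Q_K},\, f - h_{Q_K}) = \sum_{x \in K}\int (f(x,\varphi) - 1)\, e_{Q_K}(x,\varphi)\, \mu_{h,V}(d\varphi) \geq 0$ (using $f \geq 1$ on $K$ and \eqref{e:nonneg-e}) yields $\mathcal{E}(f,f) \geq \mathcal{E}(h_{Q_K}, h_{Q_K})$, with equality forcing $\mathcal{E}(f - h_{Q_K},\, f - h_{Q_K}) = 0$. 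Combined with the decay of both $f$ and $h_{Q_K}$ at spatial infinity (the latter a consequence of transience, cf.\ \eqref{eq:capfinite'} and $d \geq 3$), non-degeneracy of $\mathcal{E}$ on such functions forces $f = h_{Q_K}$, yielding both minimality and uniqueness.

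The hard part will be to justify that $h_{Q_K} \in \mathcal{D}(\mathcal{E})$ and that the above Green-type identity extends from compactly supported test functions to $h_{Q_K}$ itself, which is bounded but not compactly supported in $x$. My plan is to introduce the approximants $h_{Q_K}^{(N)}(x,\varphi) := P_{(x,\varphi)}[H_{Q_K} < T_N]$, where $T_N := \inf\{t \geq 0 : X_t \notin B_N\}$, for $N$ large enough that $B_N \supset K$: each $h_{Q_K}^{(N)}$ is $L$-harmonic on $(B_N \setminus K) \times E$, equals $1$ on $Q_K$, and vanishes off $Q_{B_N}$, so the capacity identity $\mathcal{E}(h_{Q_K}^{(N)}, h_{Q_K}^{(N)}) = \sum_{x \in K}\int \mu_{h,V}(d\varphi)\, e_{Q_K}^{(N)}(x,\varphi)$ with $e_{Q_K}^{(N)} := -L h_{Q_K}^{(N)}$ holds by the same computation as above. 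Uniform ellipticity and the upper bound \eqref{eq:capfinite} then keep $\mathcal{E}(h_{Q_K}^{(N)}, h_{Q_K}^{(N)})$ bounded in $N$, so the pointwise monotone limit $h_{Q_K}^{(N)} \nearrow h_{Q_K}$ lies in $\mathcal{D}(\mathcal{E})$ by standard weak-compactness for Dirichlet forms, and the desired identity \eqref{eq2isom31} follows by monotone convergence using the explicit form of $e_{Q_K}^{(N)}$.
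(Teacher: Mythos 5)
Your proposal is correct and follows essentially the same route as the paper: $L$-harmonicity of $h_{Q_K}$ off $K$ and the jump-part computation for $e_{Q_K} \geq 0$ (the paper phrases this via the semigroup derivative, cf.\ \eqref{eq:lim-t}, which amounts to the same first-jump calculation), followed by the decomposition $\mathcal{E}(f,f)=\mathcal{E}(f-h_{Q_K},f-h_{Q_K})+\mathcal{E}(h_{Q_K},h_{Q_K})+2\mathcal{E}(f-h_{Q_K},h_{Q_K})$ with the Green-type identity giving both the capacity formula and the nonnegativity of the cross term. Your additional finite-volume exhaustion for membership of $h_{Q_K}$ in $\mathcal{D}(\mathcal{E})$ and the explicit uniqueness argument are extra care on points the paper leaves implicit, not a different method.
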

\begin{proof}
The property \eqref{e:supp-e} follows by $L$-harmonicity of $h_{Q_K}$ in view of \eqref{eq2isom13}. To see \eqref{e:nonneg-e}, denoting by $(P_t)_{t \geq 0}$ the semigroup associated to $\overline{X}$, one has for all $z=(x,\varphi) \in Q_K$, applying the Markov property at time $t$,
$$
 \lim_{t \downarrow 0}  t^{-1} (h_{Q_K}(z)-(P_th_{Q_K})(z))
= \lim_{t \downarrow 0} t^{-1}(1-E_z[P_{\overline{X}_t}[H_{Q_K}< \infty]])
= \lim_{t \downarrow 0} t^{-1}P_z[H_{Q_K} \circ \theta_t= \infty]
$$
which is plainly non-negative; to see that the limit on the right-hand side exists, denoting by $\tau$ the first jump time of $X_{\cdot}$, the spatial part of $\overline{X}_{\cdot}$, one notes that it equals
\begin{equation}\label{eq:lim-t}
 \lim_{t \downarrow 0} t^{-1} E_{(x,\varphi)}\big[1\{ \tau \leq t\} P_{\overline{X}_{\tau}}[H_{Q_K}= \infty] \big]
\end{equation}
because $\overline{X}$ can only escape $Q_K$ through its spatial part $X$ and the contribution stemming from two or more spatial jumps up to time $t$ is $O(t^2)$ as $t\downarrow 0$; similarly the expectation in \eqref{eq:lim-t} is bounded by $ct$ for $t \le 1$. 

To obtain that $h_{Q_K}$ is a minimizer, first note that by definition, see \eqref{eq2isom11}, and by transience, $h_{Q_K}$ satisfies the constraints in \eqref{eq2isom30}. For arbitrary $f$ as in \eqref{eq2isom30}, one has
\begin{equation}
\label{eq2isom30.1}
 \mathcal{E}(f,f)= \mathcal{E}(f-h_{Q_K},f-h_{Q_K}) + \mathcal{E}(h_{Q_K},h_{Q_K}) + 2\mathcal{E}(f-h_{Q_K},h_{Q_K})
\end{equation}
The first term in \eqref{eq2isom30.1} is non-negative. On account of \eqref{eq2isom10} and due to \eqref{eq2isom13},
\begin{equation}
\label{eq2isom30.2}
\mathcal{E}(h_{Q_K},h_{Q_K})= \left\langle h_{Q_K}, (-L\overline{U}) e_{Q_{K}} \right\rangle_{L^2(\rho_{h,V})} = \left\langle 1, e_{Q_{K}} \right\rangle_{L^2(\rho_{h,V})} , 
\end{equation}
where the last step uses that $h_{Q_K}(\cdot,\varphi)= 1$ on $K$, which is the support of $e_{Q_{K}}(\cdot,\varphi)$, see \eqref{eq2isom13}. The last expression in \eqref{eq2isom30.2} is exactly the right-hand side of \eqref{eq2isom31}. To conclude, one observes that the third term in \eqref{eq2isom30.1} can be recast using $\mathcal{E}(f-h_{Q_K},h_{Q_K})= \left\langle f-h_{Q_K}, e_{Q_{K}} \right\rangle_{L^2(\rho_{h,V})} $ and the latter is non-negative because $(f-h_{Q_K})(\cdot,\varphi) \geq 0$ on $K$ by \eqref{eq2isom30}. 
\end{proof}

A key ingredient for the construction of the intensity measure $\nu$ below is the following result. We write $\overline{W}_{Q_K}^{\,+}$ below for the subset of trajectories in $\overline{W}^{\,+}$ with starting point in $Q_K$. Recall the definition of $P_{\rho}$ from \eqref{eq2isom31} and abbreviate $\rho=\rho_{h,V}$ for the remainder of this section.

\begin{proposition}[Sweeping identity] $\quad$
\label{P:sweep}

\medskip
\noindent With $e_{Q_K}$ as defined in \eqref{eq2isom13}, for all $K \subset K' \subset \subset \Z^d$ and 
bounded measurable $f: \overline{W}_{Q_K}^{\,+}\to \R$,
\begin{equation}
\label{eq2isom33}
\begin{split}
  E_{\rho}\big[e_{Q_{K'}}(\overline{X}_0)1_{\{H_{Q_K} < \infty\}}  f\big( \overline{X}\circ \theta_{H_{Q_K}} \big) \big]  =   E_{\rho}\left[e_{Q_{K}}(\overline{X}_0) f(\overline{X})\right]
\end{split}
\end{equation}\end{proposition}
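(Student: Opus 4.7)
My plan is to reduce \eqref{eq2isom33} to a single sweeping identity for bounded functions on $Q_K$, which I will then derive by applying the reversibility of $\overline X$ with respect to $\rho$ (i.e.\ the symmetry relation \eqref{eq2isom10.1}) in conjunction with the characterization $e_{Q_\cdot} = -Lh_{Q_\cdot}$ from \eqref{eq2isom13}, together with the fact that $h_{Q_K}$ and $h_{Q_{K'}}$ are both identically one on $Q_K$.

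\textbf{Step 1 (reduction).} By a standard monotone class argument, it will suffice to consider $f$ of the form $f(\overline X) = F(\overline X_{t_1}, \dots, \overline X_{t_n})$ with $0 \leq t_1 < \dots < t_n$ and $F$ bounded measurable. Setting $\phi(z) \stackrel{\text{def.}}{=} E_z[f(\overline X)]$, the strong Markov property applied at $H_{Q_K}$ will give
$$ E_z \big[ 1_{\{H_{Q_K} < \infty\}}\, f(\overline X \circ \theta_{H_{Q_K}}) \big] = \Pi_{Q_K}\phi(z), \quad z \in \Z^d \times E, $$
where $\Pi_{Q_K} \phi(z) \stackrel{\text{def.}}{=} E_z[1_{\{H_{Q_K}<\infty\}}\, \phi(\overline X_{H_{Q_K}})]$, which reduces to $\phi(z)$ when $z \in Q_K$. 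Using \eqref{eq2isom31} to unfold $E_\rho$ on both sides of \eqref{eq2isom33}, the claim will thus come down to
\begin{equation}\label{eq:sweep-reduced}
\int e_{Q_{K'}}(z)\, \Pi_{Q_K}\phi(z)\, \rho(dz)  =  \int e_{Q_K}(z)\, \phi(z)\, \rho(dz).
\end{equation}

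\textbf{Step 2 (integration by parts).} To establish \eqref{eq:sweep-reduced}, I will exploit three facts: (i) by a Dynkin-type argument based on the strong Markov property, $\Pi_{Q_K}\phi$ is $L$-harmonic on $Q_K^c$, so $-L\Pi_{Q_K}\phi$ is supported in $Q_K$; (ii) $\Pi_{Q_K}\phi = \phi$ on $Q_K \supset \supp(e_{Q_K})$, by construction and \eqref{e:supp-e}; (iii) since $K \subset K'$, any trajectory starting in $Q_K$ instantly hits $Q_K \subset Q_{K'}$, so $h_{Q_K} \equiv 1 \equiv h_{Q_{K'}}$ on $Q_K$. Writing $e_{Q_{K'}} = -Lh_{Q_{K'}}$ and invoking symmetry of $L$ with respect to $\rho$, cf.\ \eqref{eq2isom10.1}, the left-hand side of \eqref{eq:sweep-reduced} will be rewritten as
$$ \int (-Lh_{Q_{K'}})\, \Pi_{Q_K}\phi \, d\rho \, = \, \int h_{Q_{K'}}\, (-L\Pi_{Q_K}\phi)\, d\rho \, = \, \int_{Q_K}(-L\Pi_{Q_K}\phi) \, d\rho, $$
the final equality using (i) and (iii). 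For the right-hand side of \eqref{eq:sweep-reduced}, (ii) gives $\int e_{Q_K}\phi\, d\rho = \int e_{Q_K} \Pi_{Q_K}\phi\, d\rho$, and an analogous integration by parts combined with (i) and $h_{Q_K} \equiv 1$ on $Q_K$ will yield the same expression $\int_{Q_K}(-L\Pi_{Q_K}\phi)\, d\rho$. Identity \eqref{eq:sweep-reduced}, and hence \eqref{eq2isom33}, will follow.

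\textbf{Main obstacle.} The principal technical difficulty is the rigorous justification of the two integration-by-parts steps. Because $\rho$ is infinite and the functions $h_{Q_{K'}}, \Pi_{Q_K}\phi$ do not a priori lie in $L^2(\rho)$, one cannot directly invoke the self-adjointness of $L$ on $L^2(\rho)$. A clean way around this will be to work in the extended Dirichlet space and use $\mathcal{E}(h_{Q_{K'}}, h_{Q_{K'}}) = \text{cap}(Q_{K'}) < \infty$ (see Lemma~\ref{P:equi}) together with a corresponding finite-energy bound on $\Pi_{Q_K}\phi$ derived from the boundedness of $\phi$ and domination by $h_{Q_K}$. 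As a more hands-on alternative, I would kill $\overline X$ upon exit from a large cylinder $Q_N \supset Q_{K'}$, perform the whole computation in this finite-volume setting (where every object now lies in $L^2$ of the natural reference measure and self-adjointness applies straightforwardly), and pass to the limit $N \to \infty$ using transience and the convergence $h_{Q_K}^{N} \nearrow h_{Q_K}$; this is the route I would favour for the cleanest exposition.
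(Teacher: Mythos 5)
Your argument is correct, and it takes a genuinely different route from the paper's. The paper first establishes a pathwise \emph{switching identity}, \eqref{eq2isom32}, by time-reversal of the stationary process under $P_\rho$, and then applies it with $w=e_{Q_{K'}}$ and $v=-L\xi$, where $\xi=1_{Q_K}E_{(\cdot,\cdot)}[f]$ as in \eqref{eq2isom33.1}; the conclusion then follows from $\overline{U}e_{Q_{K'}}=h_{Q_{K'}}\equiv 1$ on $Q_K$ and a \emph{single} application of the symmetry \eqref{eq2isom10.1}, which pairs the spatially compactly supported function $-L\xi$ against the bounded $h_{Q_K}$. You dispense with the switching lemma and the potential operator $\overline{U}$ altogether: reversibility enters only analytically, through \eqref{eq2isom10.1} applied twice, combined with the $L$-harmonicity of $\Pi_{Q_K}\phi$ off $Q_K$ and $h_{Q_K}=h_{Q_{K'}}\equiv 1$ on $Q_K$, so that both sides collapse to $\int_{Q_K}(-L\Pi_{Q_K}\phi)\,d\rho$. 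What the paper's pathwise lemma buys is that most of the work happens at the level of expectations of bounded functionals under the stationary law, leaving only one symmetry pairing to justify; your version needs two such pairings, plus the pointwise existence of $-L\Pi_{Q_K}\phi$ on $Q_K$ (the same limiting argument used for $e_{Q_K}$ in Lemma~\ref{P:equi}, since $\Pi_{Q_K}\phi$ need not lie in the domain of $L$ in any naive sense) and, for the harmonicity claim at sites spatially adjacent to $K$, continuity of $\phi(x,\cdot)$ in $\varphi$ — so your monotone class reduction should be to $f$ depending continuously on finitely many coordinates at finitely many times, exactly as the paper restricts to $C^b_c$ test functions in its switching lemma. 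What your route buys is economy (no time-reversal argument, no $\overline{U}$) and a transparent identification of both sides with a single quantity; and your two proposed remedies for the $L^2(\rho)$ issue — the extended Dirichlet space via $\mathcal{E}(h_{Q_{K'}},h_{Q_{K'}})=\textnormal{cap}(Q_{K'})<\infty$, or killing outside a large cylinder $Q_N$ and letting $h^N_{Q_K}\nearrow h_{Q_K}$ — are both viable and, if carried out, would in fact be more careful than the paper, which simply invokes \eqref{eq2isom10.1} at the analogous step.
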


To prove Proposition \ref{P:sweep}, we will use the following result. We tacitly identify $E$ with the weighted $L^2$-space $E_r= \{ \varphi \in E: |\varphi|_r^2 \stackrel{\text{def.}}{=} \sum_x |\varphi_x|^2 e^{-r |x|}< \infty \}$ for arbitrary (fixed) $r>0$, which has full measure under $\mu$, and continuity on $E$ is meant with respect to $|\cdot|_r^2$ in the sequel.

\begin{lem}[Switching identity] $\quad$

\medskip
\noindent For all $K \subset \subset \Z^d$ and $v,w \in C_c^b(\Z^d \times E)$ (continuous bounded with compact support),
 \begin{equation}
\label{eq2isom32}
\begin{split}
&E_{\rho} \big[w(\overline{X}_0)1_{\{ H_{Q_K} < \infty\}} \overline{U}v\big(\overline{X}_{H_{Q_K} }\big) \big]  = E_{\rho} \big[v(\overline{X}_0)1_{\{ H_{Q_K} < \infty\}} \overline{U}w\big(\overline{X}_{H_{Q_K} }\big) \big]. 
\end{split}
\end{equation}
\end{lem}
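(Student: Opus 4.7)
The strategy is classical: convert the two sides of \eqref{eq2isom32} into double integrals of the form $\int_0^\infty E_\rho[\cdots] dt$ and then exchange the roles of $v$ and $w$ using reversibility of $\overline{X}_{\cdot}$ under $\rho$. The starting observation is that $L=L^{h,V}$ is symmetric with respect to $\rho=\rho_{h,V}$ by \eqref{eq2isom10.1}, so the corresponding semigroup $(P_t)_{t \geq 0}$ is self-adjoint in $L^2(\rho)$, which promotes to path-space reversibility in the usual sense.

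First, I would use the strong Markov property at the stopping time $H_{Q_K}$ to unfold $\overline{U}v$. Since $\overline{U}v(\overline{x}) = E_{\overline{x}}[\int_0^\infty v(\overline{X}_s) ds]$, one has
\begin{equation*}
E_\rho\!\left[w(\overline{X}_0) 1_{\{H_{Q_K}<\infty\}} \overline{U}v(\overline{X}_{H_{Q_K}})\right]
= E_\rho\!\left[w(\overline{X}_0) 1_{\{H_{Q_K}<\infty\}} \int_{H_{Q_K}}^\infty v(\overline{X}_t)\,dt\right],
\end{equation*}
and Fubini converts this into $\int_0^\infty E_\rho[w(\overline{X}_0) v(\overline{X}_t) 1_{\{H_{Q_K} \leq t\}}]\,dt$. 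The use of Fubini is legitimate because $v,w$ are bounded with compact support and $E_\rho[\overline{U}|v|\cdot 1_{Q_{K'}}] < \infty$ for the finite sets $K,K'$ carrying the compact supports in play (using \eqref{eq:capfinite} together with $g(x,y) < \infty$).

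The core step is then the pointwise-in-$t$ identity
\begin{equation*}
E_\rho\!\left[w(\overline{X}_0) v(\overline{X}_t) 1_{\{H_{Q_K}(\overline{X}) \leq t\}}\right]
= E_\rho\!\left[v(\overline{X}_0) w(\overline{X}_t) 1_{\{H_{Q_K}(\overline{X}) \leq t\}}\right].
\end{equation*}
To derive it, I note that the event $\{H_{Q_K} \leq t\}$ is precisely $\{\overline{X}_s \in Q_K \text{ for some } s \in [0,t]\}$, which is invariant under the time-reversal $s \mapsto t-s$ of the trajectory on $[0,t]$. Since $\rho$ is reversible for $\overline{X}_{\cdot}$, the law of $(\overline{X}_s)_{s\in[0,t]}$ under $P_\rho$ equals that of $(\overline{X}_{t-s})_{s\in[0,t]}$ under $P_\rho$ (understood at the level of the $\sigma$-finite path measure $P_\rho$, in the precise sense that integrating against a factorized boundary functional $F_0(\overline{X}_0) G_t(\overline{X}_t)$ yields the same value once $F_0$ and $G_t$ are swapped, which is exactly what self-adjointness of $P_t$ on $L^2(\rho)$ delivers). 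Applying this with $F_0=w$, $G_t=v$ and the time-reversal-invariant event above gives the claimed swap.

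Finally, integrating back in $t$ and running the first step in reverse (again by strong Markov and Fubini) recovers the right-hand side of \eqref{eq2isom32}. The main technical obstacle is the reversibility step: the state space $\Z^d \times E$ is infinite-dimensional, $\rho$ is only $\sigma$-finite, and the event $\{H_{Q_K} \leq t\}$ depends on the whole path; the argument must therefore be phrased via the symmetric semigroup and a standard approximation of the path event by its discretizations $\bigcup_{k \leq n} \{\overline{X}_{kt/n} \in Q_K\}$, for which the boundary-factorized swap is transparent. Right-continuity of trajectories in $\overline{W}^+$ (and the continuity of $v,w$) ensures these discretized events converge to $\{H_{Q_K} \leq t\}$ so that the identity survives in the limit.
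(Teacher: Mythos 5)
Your proposal is correct and follows essentially the same route as the paper: unfold $\overline{U}v$ at the stopping time, rewrite as $\int_0^\infty E_\rho[w(\overline{X}_0)v(\overline{X}_s)1_{\{H_{Q_K}\le s\}}]\,ds$, and swap $v$ and $w$ pointwise in $s$ using reversibility of $\overline{X}_\cdot$ under $\rho$ together with the time-reversal invariance of the entrance event. The extra care you take with Fubini and with discretizing the path event is a more detailed justification of the step the paper states in one line ("$\overline{X}_\cdot$ and $\overline{X}_{s-\cdot}$ have the same law under $P_\rho$"), not a different argument.
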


\begin{proof}
One writes
\begin{equation*}
\begin{split}
&E_{\rho} \big[w(\overline{X}_0)1_{\{ H_{Q_K} < \infty\}} \overline{U} v\big(\overline{X}_{H_{Q_K} }\big) \big] = \int_0^{\infty} dt E_{\rho} \big[w(\overline{X}_0)1_{\{ H_{Q_K} < \infty\}} v\big(\overline{X}_{t+H_{Q_K} }\big) \big]\\
&= \int_0^{\infty} ds E_{\rho} \big[w(\overline{X}_0)1_{\{ H_{Q_K} \leq s\}} v\big(\overline{X}_{s}\big) \big] = \int_0^{\infty} ds E_{\rho} \big[w(\overline{X}_s)1_{\{ \exists t\in [0,s]: \overline{X}_{s-t} \in Q_K\}} v\big(\overline{X}_{0}\big) \big],
\end{split}
\end{equation*}
where the last step uses that $\overline{X}_{\cdot}$ and $\overline{X}_{s-\cdot}$ have the same law under $P_{\rho}$. The last integral is readily seen to equal the expectation in second line of \eqref{eq2isom33}.
 \end{proof}
 
 \begin{proof}[Proof of Proposition \ref{P:sweep}]
 For a given $f$ as appearing in \eqref{eq2isom33}, consider the function $v$ defined such that, with $\overline{U}$ as in \eqref{eq2isom12},
 \begin{equation}
 \label{eq2isom33.1}
 \begin{split}
 &\overline{U}v=  \xi,  \text{ where }\\
 & \xi(x,\varphi) = 1_{Q_K}(x,\varphi) E_{(x,\varphi)}\left[f(\overline{X})\right], \quad x\in \Z^d, \varphi \in E.
 \end{split}
 \end{equation}
 (i.e.~let $v= -L \xi $).
By \eqref{eq2isom31} and the strong Markov property at time $Q_K$, one can rewrite
\begin{equation}
 \label{eq2isom33.10}
  E_{\rho}\big[e_{Q_{K'}}(\overline{X}_0)1_{\{H_{Q_K} < \infty\}}  f\big( \overline{X}\circ \theta_{H_{Q_K}} \big) \big]  =  E_{\rho}\big[e_{Q_{K'}}(\overline{X}_0)1_{\{H_{Q_K} < \infty\}}  \xi \big( \overline{X}_{H_{Q_K}} \big) \big]
  \end{equation}
In view of \eqref{eq2isom33.1}, \eqref{eq2isom33.10}, applying \eqref{eq2isom32} with $w= e_{Q_{K'}}$ and $v$ as in \eqref{eq2isom33.1} yields that the left-hand side of \eqref{eq2isom33} equals
\begin{equation}
 \label{eq2isom33.3}
E_{\rho} \big[v(\overline{X}_0)1_{\{ H_{Q_K} < \infty\}} \overline{U}e_{Q_{K'}}\big(\overline{X}_{H_{Q_K} }\big) \big].
\end{equation}
Since $K\subseteq K' $, \eqref{eq2isom13} and \eqref{eq2isom11} imply that, on the event $\{ H_{Q_K} < \infty\}$, $\overline{U}e_{Q_{K'}}\big(\overline{X}_{H_{Q_K} } \big)= h_{K'}(\overline{X}_{H_{Q_K}})=1$, whence \eqref{eq2isom33.3} simplifies to
\begin{equation}
 \label{eq2isom33.4}
 \begin{split}
&E_{\rho} \big[v(\overline{X}_0)1_{\{ H_{Q_K} < \infty\}}\big]\stackrel{\eqref{eq2isom11}}{=} \int_{E\times \Z^d} \rho(d \varphi,dx) v(x,\varphi)h_{Q_K}(x,\varphi) \stackrel{\eqref{eq2isom33.1}}{=} \langle -L \xi, h_{Q_K}\rangle_{L^2(\rho)} \\
&\stackrel{\eqref{eq2isom10.1}}{=}
\langle \xi,-L h_{Q_K}\rangle_{L^2(\rho)}\stackrel{\eqref{eq2isom13}}{=}\langle \xi, e_{Q_{K}}\rangle_{L^2(\rho)} \stackrel{\eqref{eq2isom33.1}}{=} \sum_x \int_E \mu(d \varphi) e_{Q_{K}}(x,\varphi) E_{(x,\varphi)}[f],
\end{split}
\end{equation}
which yields \eqref{eq2isom33}.
\end{proof}
\begin{rmk}\label{R:DLR}
The sweeping identity \eqref{eq2isom33} corresponds to the classical Dobrushin-Lanford-Ruelle-equations in equilibrium statistical mechanics, see e.g.~\cite[Def.~p.28]{zbMATH05883229}: for all $K \subset K' \subset \Z^d$ and $f=1_{\{X_0=z\}}$, $z \in K$, explicating \eqref{eq2isom33} gives
\begin{equation}
\label{eq2isom333}
\begin{split}
\sum_x \int_E \mu(d \varphi) e_{Q_{K'}}(x,\varphi) P_{(x,\varphi)}\big[H_{Q_K} < \infty, X_{H_{Q_K}}=z  \big]  =  \int_E \mu(d \varphi) e_{Q_{K}}(z,\varphi). 
\end{split}
\end{equation}
\end{rmk}

We now introduce the intensity measure $\nu$ which will govern the relevant Poisson processes. We write $\overline W$ for the space of bi-infinite right-continuous trajectories on $\Z^d\times E$ whose projection on $\Z^d$ escapes all finite sets in finite time. Its canonical coordinates will be denoted by $\overline X_t =(X_t, \varphi_t)$,  $t \in \R$, and we will abbreviate $\overline{X}_{\pm} = (\overline{X}_{\pm t} )_{t > 0}$. We let $ \overline{W}^* = \overline{W}/ \sim$ be the corresponding space modulo time-shift, i.e. $\overline{w} \sim \overline{w}'$ if $(\theta_t \overline w)=\overline w'$ for some $t \in \R$, and denote by $\pi^*: \overline W\to \overline W^*$ the associated projection. We also write $ \overline{W}_{Q_K}\subset \overline{W} $ for the set of trajectories entering $Q_K$, i.e. $\overline w \in \overline{W}_{Q_K} $ if $\overline{X}_t(\overline w) \in Q_K$ for some $t \in \R$, and $\overline{W}_{Q_K}^* =\pi^*(\overline{W}_{Q_K})$. All above spaces of trajectories are endowed with their corresponding canonical $\sigma$-algebra, denoted by 
$\overline{\mathcal{W}}$, $\overline{\mathcal{W}}^*$, $\overline{\mathcal{W}}_{Q_K}$ etc.
We then first introduce a measure $\nu_{Q_K}$ on $(\overline W, \overline{\mathcal{W}})$ as follows:
\begin{equation}
\label{eq3isom1}
\begin{split}
&\nu_{Q_K} \left[\, \overline{X}_- \in {A}_-, \, \overline{X}_0 \in {A}, \, \overline{X}_+ \in {A}_+ \,\right]\\
&\qquad \qquad \stackrel{\text{def.}}{=} \int_{{A}} \rho (dx, d\varphi) P_{(x,\varphi)}\left[\, \overline{X} \in {A}_+\right] \times E_{(x,\varphi)} \big[1_{\{\overline{X} \in {A}_-\}} e_{Q_K}(\overline{X}_0)\big],
\end{split}
\end{equation}
with $e_{Q_K}$ as defined in \eqref{eq2isom13}, and where, with a slight abuse of notation, we identify $ {A}_{\pm} \in \sigma (\overline{X}_{\pm}) $ (part of $ \overline{\mathcal{W}}$) with the corresponding events in $\overline{\mathcal{W}}_+$. The latter is the $\sigma$-algebra of $\overline{{W}}_+$, the space of one-sided trajectories on which $P_{(x,\varphi)}$ is naturally defined. Note that the $\rho$-integral in \eqref{eq3isom1} is effectively over $A \cap \{ X_0 \in K\}$, hence $\nu_{Q_K}$ is a finite measure, and by \eqref{eq2isom31},
\begin{equation}
\label{eq3isom2}
\nu_{Q_K}\left(\overline W\right)= \nu_{Q_K}\left(\overline{W}_{Q_K}\right)= \text{cap}(Q_K), \text{ for }K \subset \subset \mathbb{Z}^d.
\end{equation}
The family of measures $\{ \nu_{Q_K} : K \subset \subset \mathbb{Z}^d\}$ can be patched up as follows.
\begin{thm}[$d \geqslant 3$, $h,V$ as in \eqref{eq2isom2}]
\label{T:nu}
There exists a unique $\sigma$-finite measure $\nu=\nu_{h,V}$ on $(\overline W^*, \overline{\mathcal{W}}^*)$ such that
\begin{equation}
\label{eq3isom3}
\begin{split}
&\nu \vert_{\overline{\mathcal{W}}_{Q_K}^{\,*}} = \pi^* \circ \nu_{Q_K} , \text{ for all } K \subset \subset \Z^d.
\end{split}
\end{equation}
\end{thm}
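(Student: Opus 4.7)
The plan is to construct $\nu$ by gluing the compatible family $\{\pi^* \circ \nu_{Q_K}\}_{K \subset\subset \Z^d}$, whose consistency follows from the sweeping identity of Proposition~\ref{P:sweep}. The key structural observation enabling both $\sigma$-finiteness and uniqueness is the exhaustion $\overline W^* = \bigcup_N \overline{W}^*_{Q_N}$: any $\overline w \in \overline W$ has spatial projection escaping all finite sets in finite time, so $\overline X_t(\overline w) \in Q_N$ for some $t\in\R$ as soon as $N$ is large enough. Combined with $\nu_{Q_N}(\overline W) = \text{cap}(Q_N) < \infty$ from \eqref{eq3isom2} and \eqref{eq:capfinite}, this will give a $\sigma$-finite measure and reduce uniqueness to equality on each layer of the exhaustion.

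For the core compatibility check, namely
\[
(\pi^* \circ \nu_{Q_{K'}})\vert_{\overline{\mathcal{W}}_{Q_K}^*} = \pi^* \circ \nu_{Q_K} \quad (K\subseteq K'\subset\subset \Z^d),
\]
I would parametrize elements of $\overline W^*_{Q_K}$ by the (measurable) ``first-entry'' section $s_K$ selecting the unique representative with $X_0 \in Q_K$ and $\overline X_t \notin Q_K$ for all $t<0$. On $\overline W^*_{Q_K}$ the section $s_{K'}$ differs from $s_K$ by the time-shift $\theta_{H_{Q_K}}$ acting on representatives, so testing the desired equality against a bounded measurable $F$ on $\overline W_{Q_K}$ reduces to verifying
\[
E_{\rho}\big[e_{Q_{K'}}(\overline X_0) \, 1_{\{H_{Q_K} < \infty\}} \, F(\overline X \circ \theta_{H_{Q_K}})\big] \, = \, E_{\rho}\big[e_{Q_K}(\overline X_0) \, F(\overline X)\big].
\]
For $F$ depending only on the forward part of the trajectory this is literally Proposition~\ref{P:sweep}. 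For $F$ also involving the backward part, I would decompose the new backward part of $\overline X \circ \theta_{H_{Q_K}}$ into the pre-$H_{Q_K}$ segment of the original forward trajectory glued to the original backward one, and apply reversibility \eqref{eq2isom10.1} of $\overline X$ under $\rho$ to turn the backward segments into forward ones before invoking Proposition~\ref{P:sweep} (equivalently the switching identity \eqref{eq2isom32}) once more.

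Given consistency, I would define $\nu$ on $\overline{\mathcal{W}}^*$ by $\nu(A) = \lim_{N \to \infty} (\pi^* \circ \nu_{Q_N})(A \cap \overline W^*_{Q_N})$; the limit exists and is monotone thanks to consistency, countable additivity and \eqref{eq3isom3} are then automatic, and $\sigma$-finiteness follows from $\nu(\overline W^*_{Q_N}) \leq \text{cap}(Q_N) < \infty$. Uniqueness holds because any other $\sigma$-finite $\nu'$ satisfying \eqref{eq3isom3} agrees with $\nu$ on each $\overline{\mathcal{W}}^*_{Q_N}$, hence on the generating class $\bigcup_N \overline{\mathcal{W}}^*_{Q_N}$. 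I expect the main obstacle to be the bi-infinite compatibility check: after shifting by $\theta_{H_{Q_K}}$, the segment between the old origin in $Q_{K'}$ and $Q_K$ migrates into the past of the shifted trajectory, and showing that this glued past has the correct backward $\nu_{Q_K}$-marginal is where reversibility of $\overline X$ under $\rho$ and the representation \eqref{eq2isom13} of $e_{Q_K}$ as $-Lh_{Q_K}$ must be combined most carefully; by contrast, the forward marginal is dispatched cleanly by the strong Markov property together with Proposition~\ref{P:sweep}.
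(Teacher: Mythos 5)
Your outer scaffolding (gluing along the exhaustion by the $Q_{B_n}$, $\sigma$-finiteness via \eqref{eq3isom2} and \eqref{eq:capfinite}, uniqueness by monotone convergence) is exactly the paper's. The gap lies in the compatibility step, and it comes from a misreading of \eqref{eq3isom1}: under $\nu_{Q_K}$ the time-$0$ point is an $e_{Q_K}$-weighted point of $Q_K$ whose backward part is an \emph{independent, unconditioned} copy of the process started there; it is \emph{not} the first-entrance parametrization, and $\nu_{Q_K}$-a.s.\ the past may revisit $Q_K$. Consequently the pullback of $\pi^*\circ\nu_{Q_K}$ under your first-entry section $s_K$ is not $\nu_{Q_K}$ (re-marking at the global first entrance, which typically occurs at a strictly negative time, changes both the forward and the backward part), so your reduction ``testing against $F$ reduces to the displayed identity'' does not hold, and even the forward-only case is not ``literally Proposition~\ref{P:sweep}'' in your parametrization. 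Worse, the statement you plan to prove when $F$ involves the backward part, namely $\theta_{H_{Q_K}}\circ\big(1_{\{H_{Q_K}<\infty\}}\,\nu_{Q_{K'}}\big)=\nu_{Q_K}$ as measures on \emph{parametrized} bi-infinite trajectories, is false for these measures: if $K$ lies strictly inside $K'$ (say $K=\{0,e_1\}$, $K'=B_{10}$), then under the left-hand side the backward portion of the shifted trajectory is the time-reversed pre-$H_{Q_K}$ segment and hence avoids $Q_K$ until it reaches $\partial K'\times E$, so its first spatial jump never lands in $K$; under $\nu_{Q_K}$ the free past makes its first jump into $K$ with positive probability. Only the equality of the $\pi^*$-pushforwards is true, so the backward-gluing step you single out as the main obstacle is not merely delicate --- it aims at a false identity. (A further symptom of the same misreading: the indicator $1_{\{H_{Q_K}<\infty\}}$ with forward-time $H_{Q_K}$ drops the $\nu_{Q_{K'}}$-trajectories that meet $Q_K$ only at negative times, which nevertheless belong to $(\pi^*)^{-1}(\overline W^{*}_{Q_K})$ and carry mass.)

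The paper's proof is engineered precisely to bypass this: it compares the two pushforwards only on shift-invariant events of the form $\pi^*(\{\overline X_0\in A_0,\ \overline X_+\in A_+\})$ (taken there as a generating class of $\overline{\mathcal{W}}^{\,*}_{Q_K}$), for which, by the product structure of \eqref{eq3isom1}, the unconditioned backward parts integrate out on both sides and \eqref{eq3isom5} reduces verbatim to the sweeping identity \eqref{eq2isom33}; reversibility enters only inside Proposition~\ref{P:sweep}, through the switching identity \eqref{eq2isom32}, never through a reversal-and-gluing of pasts. If you want to rescue your Sznitman-style route, you would first need a separate lemma identifying $\pi^*\circ\nu_{Q_K}$ with the pushforward of a first-entrance measure whose backward part is conditioned never to return to $Q_K$ (a last-exit/time-reversal computation in its own right); your outline presupposes this identification instead of proving it.
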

\begin{proof}
The uniqueness of $\nu$ follows immediately from \eqref{eq3isom1}, since for all $A^* \in \overline{\mathcal{W}}^*$, with $A=(\pi^*)^{-1}(A)$, one has $\nu(A^*)= \lim_n  \nu_{Q_{B_n}} (A \cap {W}_{Q_{B_n}} )$ by monotone convergence. In order to prove existence, it is enough to argue that 
\begin{equation}
\label{eq3isom4}
\begin{split}
& \text{for all $K \subset K' \subset \subset \Z^d$ and $A^* \in \overline{\mathcal{W}}_{Q_K}^*$: }\\
&(\pi^* \circ \nu_{Q_{K'}})(A^*) =  (\pi^* \circ \nu_{Q_K})(A^*)
\end{split}
\end{equation}
(note that the left-hand side is well-defined since $\overline{\mathcal{W}}_{Q_K}^* \subset \overline{\mathcal{W}}_{Q_K'}^*$). Indeed, once \eqref{eq3isom4} is shown, one simply sets
$$
\nu(A^*) =\sum_{n \geqslant 1} (\pi^* \circ \nu_{Q_{B_n}})\left(A^* \cap \big(\overline{\mathcal{W}}_{Q_{B_n}}^* \setminus \overline{\mathcal{W}}_{Q_{B_{n-1}}}^*\big)\right), 
$$
and \eqref{eq3isom3} is readily seen to hold using \eqref{eq3isom4}. Moreover, due to \eqref{eq3isom2} and \eqref{eq:capfinite},
$\nu\big(\overline{W}_{Q_K}^{\,*}\big) < \infty$, whence $\nu$ is $\sigma$-finite.

It remains to prove that the compatibility condition \eqref{eq3isom4} holds. Writing $ \overline{W}_{Q_K}^0 \subset  \overline{W}_{Q_K}$ for the set of trajectories entering $Q_K$ at time $0$, we first observe that $\nu_{Q_K}$ is supported on $ \overline{W}_{Q_K}^0$ and similarly  $\nu_{Q_{K'}}$ on $ \overline{W}_{Q_{K'}}^0$, see \eqref{eq3isom1}, and, recalling $H_{Q_K}$ from around \eqref{eq2isom11}, that $\theta_{H_{Q_K}}: (\overline{W}_{Q_{K'}}^0 \cap \overline{W}_{Q_{K}}) \to \overline{W}_{Q_{K}}^0 $, $w \mapsto \theta_{H_{Q_K}}w$ is a bijection for all $K \subset K'$. Hence, in order to obtain \eqref{eq3isom4} it is sufficient to show that for all measurable  $A_0 \in 2^{K} \times E $ (where $2^K$ denotes the set of subsets of $K$) and $A_+ \in \sigma(\overline X_+)$,
\begin{equation}
\label{eq3isom5}
\begin{split}
& \nu_{Q_{K'}}\big( H_{Q_K} < \infty , 
\left\{ \overline{X}_0 \in A_0,  \overline{X}_+ \in A_+\right\} \circ \theta_{H_{Q_K}} \big) =  \nu_{Q_K}\left(\overline{X}_0 \in A_0,  \overline{X}_+ \in A_+\right).
\end{split}
\end{equation}
To see why this implies \eqref{eq3isom4}, simply note that \eqref{eq3isom5} corresponds to the choice $A^*=\pi^* (\{ \overline{X}_0 \in A_0,  \overline{X}_+ \in A_+\})$ in \eqref{eq3isom4} with $A_0$, $A_+$ as above, which generate $\overline{\mathcal{W}}_{Q_K}^*$. It now remains to argue that \eqref{eq3isom5} holds. By \eqref{eq3isom1}, the left-hand side of \eqref{eq3isom5} can be recast as
\begin{multline}
\label{eq3isom6}
\int_{\Z^d\times E} \rho (dx, d\varphi) P_{(x,\varphi)}\left[\,H_{Q_K} < \infty , 
\left\{ \overline{X}_0 \in A_0,  \overline{X}_+ \in A_+\right\} \circ \theta_{H_{Q_K}}\right] \times  e_{Q_{K'}}(x,\varphi) \\
 = E_{\rho}\left[e_{Q_{K'}}(\overline{X}_0) 1_{\{H_{Q_K} < \infty\}} \times\left(1_{ 
\{ \overline{X}_0 \in A_0,  \overline{X}_+ \in A_+\} } \right) \circ \theta_{H_{Q_K}} \right],
\end{multline}
whereas the right-hand side of \eqref{eq3isom5} equals
\begin{equation}
\label{eq3isom7}
\begin{split}
& E_{\rho}\big[e_{Q_{K}}(\overline{X}_0) 1_{ 
\{ \overline{X}_0 \in A_0,  \overline{X}_+ \in A_+\} } \big].
\end{split}
\end{equation}
But by Proposition \ref{P:sweep}, the right-hand side of \eqref{eq3isom6} and \eqref{eq3isom7} coincide, and \eqref{eq3isom5} follows, which completes the proof.
\end{proof}
\begin{remark}\label{R:Gaussian-intensity} Let $\Pi: \overline W^* \to W^*$ denote the projection onto the first ($\Z^d$-valued) component of a trajectory, i.e.~$W$ is the space of bi-infinite $\Z^d$-valued transient trajectories.
In the Gaussian case $U(\eta)=\frac12\eta^2$, cf.~\eqref{eq2isom1-1}, the projection
\begin{equation}\label{nu-G}
 \nu^{\mathbf{G}} = \Pi \circ \nu_{h,V} 
 \end{equation}
of the measure $\nu_{h,V}$ constructed in Theorem~\ref{T:nu} is independent of $V$ and $h$; indeed, in view of \eqref{eq2isom5} and \eqref{eq2isom8} the generator of the spatial component of $P_{(x,\varphi)}$ is that of a simple random walk. The measure $\nu^{\mathbf{G}}$ obtained in this way is precisely (up to defining trajectories in continuous-time) the intensity measure of random interlacements constructed in Theorem~1.1 of \cite{Sz10}.
\end{remark}

\section{An isomorphism theorem} 
\label{sec:isom}
We now derive a ``Ray-Knight'' identity for convex gradient Gibbs measures, which is given in Theorem~\ref{T:isom1} below. Recall that the measure $\nu= \nu_{h,V}$ defined by Theorem \ref{T:nu} depends implicitly on the choice of $(h,V)$ appearing in \eqref{eq2isom1}, corresponding to the Gibbs measure $\mu_{h,V}$ in \eqref{eq:Gibbs_tilt}. 

In what follows, $V$ will represent a (finite) region on which we seek to probe the field $\varphi^2$ sampled under $\mu=\mu_{h=0,V=0}$, cf.~\eqref{eq:Gibbs}, corresponding to the observable $\langle V,\varphi^2\rangle_{\ell^2(\Z^d)}$, and $h$ will be carefully tuned with $V$ in the relevant intensity measure, cf.~\eqref{eq4isom2.0} and \eqref{eq4isom2}. We now introduce these measures. Recall that we assume $(h,V)$ to satisfy \eqref{eq2isom2}. For such $h,V$ and all $u>0$, define the measure $\nu_{u}^{h,V}$ on $(\overline{W}^*, \overline{\mathcal{W}}^*)$ by
\begin{equation}
\label{eq4isom2.0}
\nu_{u}^{h,V}(A) = \int_0^{\sqrt{2u}}  \int_0^{\tau} \nu_{\sigma h, V}(A)  \, d\sigma d\tau,  \text{ for } A \in \overline{\mathcal{W}}^*
\end{equation}
(the right-hand side of \eqref{eq4isom2.0} can also be recast as $\int_0^{\sqrt{2u}}  (\sqrt{2u}- \sigma) \nu_{\sigma h, V}(A)  \, d\sigma$).
On account of Theorem~\ref{T:nu}, $\nu_{u}^{h,V}$ given by \eqref{eq4isom2.0} defines a $\sigma$-finite measure. We can thus construct a Poisson point process ${\omega}$ on $\overline{W}^*$ having $\nu_{u}^{h,V}$ as intensity measure. We denote its canonical law by $\P^{h,V}_{u}$, a probability measure on the space of point measures $\Omega_{\overline{W}^*}=\{ \omega = \sum_{i \geq 0} \delta_{\overline{w}_i^*}: \overline{w}_i^* \in \overline{W}^*, i \geq 0, \text{ and } \, \omega^*(\overline{W}^*_{Q_K}) < \infty \text{ for all }K\subset \subset \Z^d\}$, endowed with its canonical $\sigma$-algebra $\mathcal{F}_{\overline{W}^*}$. The law $\P^{h,V}_{u}$ on $(\Omega_{\overline{W}^*}, \mathcal{F}_{\overline{W}^*})$ is completely characterized by the fact that for any non-negative, $\overline{\mathcal W}^*$-measurable function \nolinebreak$f$,
\begin{equation}
\label{eq4isom3.bis}
\E^{h,V}_{u}\left[ \exp \left\{ - \int_{\overline{W}^*} f\,  \omega(d \overline{w}^*) \right\} \right]= \exp\left\{ - \int_{\overline{W}^*} (1-e^{-f}) \nu_{u}^{h,V} (d \overline{w}^*)\right\}.
\end{equation}
Of particular interest below is the corresponding field of (spatial) occupation times $(\mathcal{L}_{x})_{x\in \Z^d}$, defined as follows: for ${\omega}=\sum_{i \geq 0} \delta_{\overline{w}_i^*}$, let
\begin{equation}
\label{eq4isom4}
\mathcal{L}_{x}(\omega) =\sum_{i \geq 0} \int_{-\infty}^{\infty} 1\{ X_t(\overline{w}_i) =x \} dt, \text{ for } x \in \Z^d,
\end{equation}
where $\overline{w}_i \in \overline{W}$ is any trajectory such that $\pi^*(\overline{w}_i) = \overline{w}_i^*$ and $X_t(\overline{w}_i)$ is the projection onto the spatial coordinate of $\overline{w}_i$ at time $t$. In what follows, we frequently identify $V(x,\varphi)=V(x)$, $\varphi \in E$, viewed either as such or tacitly as multiplication operator $(Vf)(x,\varphi) = V(x,\varphi)f(x,\varphi)$, for suitable $f$. We first develop a  representation of Laplace functionals for the field $\mathcal{L}$ that will prove useful in the sequel.

\begin{lem}[$u>0, \text{$h,V$ as in \eqref{eq2isom2}}$]
\label{L:eq4isom10}
\begin{equation}
\label{eq4isom10}
\log \E^{h,V}_{u}   \big[  e^{  \langle V,  \, \mathcal{L}_{\cdot} \rangle_{\ell^2}} \big] =  
- \int_0^{\sqrt{2u}}   \int_0^{\tau}    \big\langle  V, \big( L^{\sigma h, V}+V\big)^{-1} V - 1 \big\rangle_{L^2(\rho_{\sigma h, V})}  \, d\sigma d\tau.
\end{equation}
(Here, with hopefully obvious notation, $1$ refers to the function of $(x,\varphi) \in \Z^d \times E$ which is identically one).
\end{lem}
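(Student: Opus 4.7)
The plan is to combine the exponential (Campbell) formula for Poisson point processes with the Feynman--Kac representation for the tilted semigroup of $L + V$ and the potential-theoretic identities of Section~\ref{sec:PT}. Set $H(\overline w^{\,*}) \stackrel{\text{def.}}{=} \int_{\mathbb R} V(X_t(\overline w^{\,*}))\, dt$; this is shift-invariant on $\overline W^{\,*}$ and coincides with $\langle V, \mathcal{L}(\overline w^{\,*})\rangle_{\ell^2}$ for a single trajectory, so that $\langle V, \mathcal{L}(\omega)\rangle_{\ell^2} = \sum_i H(\overline w_i^{\,*})$ for any configuration $\omega = \sum_i \delta_{\overline w_i^{\,*}}$. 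Applying the exponential formula to the Poisson process of intensity $\nu_u^{h,V}$ yields
$$
 \log \E_u^{h,V}\!\left[ e^{\langle V, \mathcal{L}\rangle} \right]
 \;=\; \int (e^H - 1)\, d\nu_u^{h,V}
 \;\stackrel{\eqref{eq4isom2.0}}{=}\; \int_0^{\sqrt{2u}}\!\!\int_0^{\tau}\!\! \left[\int (e^H - 1)\, d\nu_{\sigma h, V}\right] d\sigma\, d\tau,
$$
so the proof reduces to establishing, for each fixed $\sigma \in [0,\sqrt{2u}]$ and with $L = L^{\sigma h, V}$, $\rho = \rho_{\sigma h, V}$, the per-$\sigma$ identity
$$
 \int (e^H - 1)\, d\nu_{\sigma h, V} \;=\; -\bigl\langle V, (L+V)^{-1} V - 1 \bigr\rangle_{L^2(\rho)}.
$$

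Fix $\sigma$ and choose any finite $K \subset \mathbb{Z}^d$ with $\operatorname{supp}(V) \subset K$, so that $H$ vanishes on trajectories not visiting $Q_K$. The compatibility property~\eqref{eq3isom3} lets me rewrite the left-hand side of the per-$\sigma$ identity as $\int_{\overline W^{\,0}_{Q_K}} (e^H - 1)\, d\nu_{Q_K}$, where the $\overline W^{\,0}_{Q_K}$-representative places the trajectory's first entry into $Q_K$ at time $0$. Using the description~\eqref{eq3isom1} of $\nu_{Q_K}$, together with the fact that, under this first-entry convention, the backward part of the trajectory cannot visit $Q_K \supset \operatorname{supp}(V) \times E$ for $t < 0$ (so that only the forward part contributes to $H$), this integral reduces to
$$
 \int \rho(dx, d\varphi)\, e_{Q_K}(x,\varphi)\, \bigl( g(x,\varphi) - 1\bigr), \qquad g(x,\varphi) \stackrel{\text{def.}}{=} E_{(x,\varphi)}\!\left[ e^{\int_0^{\infty} V(X_s)\, ds} \right].
$$
The function $g$ is bounded and well-defined by Lemma~\ref{L:RW_tilt}, applied after a routine comparison of the Markov chain $X$ with the simple random walk using the ellipticity~\eqref{eq2isom2.0} of the weights $a(\varphi)$ in~\eqref{eq2isom5}.

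By Feynman--Kac, $g$ is $(L+V)$-harmonic on $\mathbb{Z}^d \times E$ with $g \to 1$ at spatial infinity, so $R := g - 1 = -(L+V)^{-1} V$ satisfies $L R = -V g$. Since $e_{Q_K} = -L h_{Q_K}$ by \eqref{eq2isom13} and $h_{Q_K} \equiv 1$ on $K \supset \operatorname{supp}(V)$, the symmetry~\eqref{eq2isom10.1} of $L$ with respect to $\rho$ yields, once the integration by parts is justified,
$$
 \int_\rho e_{Q_K}\, R \;=\; \int_\rho h_{Q_K}\, (-L R) \;=\; \int_\rho h_{Q_K}\, V g \;=\; \int_\rho V g \;=\; -\bigl\langle V, (L+V)^{-1} V - 1 \bigr\rangle_\rho,
$$
where the last equality uses $g = 1 - (L+V)^{-1} V$ to split $\int_\rho V g$ as $\langle V, 1\rangle_\rho - \langle V, (L+V)^{-1} V\rangle_\rho$. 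Substituting back into the $(\sigma,\tau)$-integral yields~\eqref{eq4isom10}.

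The main technical obstacle is making the integration-by-parts step rigorous: in dimension $d = 3$, neither $h_{Q_K}$ nor $R$ lies in $L^2(\rho)$ -- both decay only like $|x|^{-1}$ -- so the self-adjointness of $L$ cannot be invoked directly. I would circumvent this by truncation: replace $h_{Q_K}$ with $h_{Q_K}\, h_{Q_{K'}}$ for $K' \supset\supset K$, apply the sweeping identity of Proposition~\ref{P:sweep} at each finite $K'$, and pass to the limit $K' \nearrow \mathbb{Z}^d$, using transience of $\overline X$ (quantified by \eqref{eq:capfinite}--\eqref{eq:capfinite'}) to kill the residual boundary terms; alternatively, one can argue directly at the level of the Feynman--Kac semigroup of $L+V$ and exploit the spectral estimate encoded in~\eqref{eq2isom2} to guarantee the required integrability. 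The remaining ingredients -- the Poissonian exponential formula, the Feynman--Kac identity, and the algebraic simplification of $g$ via the resolvent $(L+V)^{-1}V$ -- are routine once the structural description of $\nu_{Q_K}$ has been unpacked.
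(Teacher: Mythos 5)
Your proposal is correct and follows essentially the same route as the paper's proof: the Poissonian exponential formula together with the structure of $\nu_{Q_K}$ (only the forward part can meet $\operatorname{supp}(V)$ under the entrance-at-time-zero normalization), the Feynman--Kac identity $E_{(x,\varphi)}\big[e^{\int_0^\infty V(\overline X_s)ds}\big]-1=-\big((L+V)^{-1}V\big)(x,\varphi)$, and the integration by parts based on $e_{Q_K}=-Lh_{Q_K}$, $h_{Q_K}\equiv 1$ on $\operatorname{supp}(V)$ and the $\rho$-symmetry \eqref{eq2isom10.1}. The only differences are presentational: the paper gets the resolvent identity by differentiating $u_t=E_{(x,\varphi)}[e^{\int_0^t V}]$ directly (sidestepping the uniqueness issue implicit in your harmonicity argument), handles the signed-$V$ extension of \eqref{eq4isom3.bis} by a one-line dominated-convergence remark via \eqref{eq:RW-exp1}, and treats the self-adjointness step you single out as an obstacle as routine, whereas you propose a truncation/sweeping or semigroup-level justification for it.
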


Before going any further, let us first relate the above setup and the formula \eqref{eq4isom10} to the (simpler) Gaussian case.

\begin{remark}\label{R:Gaussian-ri}
With $\Pi$ denoting the projection onto the spatial component (cf.~Remark~\ref{R:Gaussian-intensity} for its definition), consider the induced process 
$$\eta = \Pi (\omega) \stackrel{\text{def.}}{=} \sum_{i \geq 0} \delta_{\Pi(\overline{w}_i^*)} $$ when ${\omega}=\sum_{i \geq 0} \delta_{\overline{w}_i^*}$. Classically, $\eta$ is a Poisson process with intensity measure $\Pi \circ \nu_{u}^{h,V}
$, and $\mathcal{L}_{x}(\omega)= \mathcal{L}_{x}(\eta)$, as can be plainly seen from \eqref{eq4isom4}. In the Gaussian case, substituting \eqref{nu-G} into \eqref{eq4isom2.0} and performing the integrals over $\tau$ and $\sigma$ (note to this effect that $\int_0^{\sqrt{2u}}  \int_0^{\tau} d\sigma d\tau=u$), one readily infers that $\eta$ has intensity $u \nu^{\mathbf{G}}$, i.e.~the law $\P^{\mathbf{G}}_u$ of $\eta$ is that of the interlacement process at level $u>0$, cf.~\cite{Sz10}. The field $\mathcal{L}_{x}= \mathcal{L}_{x}(\eta)$ is then simply the associated field of occupation times (at level $u$). In this case, the formula \eqref{eq4isom10} simplifies because the test function $V$ is spatial and the dynamics generated by $L_1$ and $L_2$ decouple, see \eqref{eq2isom4}, \eqref{eq2isom8} and \eqref{eq2isom9}. All in all Lemma~\ref{L:eq4isom10} thus yields, for all $V$ satisfying \eqref{eq2isom2} and $u>0$,
\begin{equation}
\label{eq4isom10-G}
- u^{-1} \log \E^{\mathbf{G}}_{u}   \big[  e^{  \langle V,  \, \mathcal{L}_{\cdot}(\eta) \rangle_{\ell^2}} \big] =  
  \big\langle  V,  G^V V -1  \big\rangle_{\ell^2} ,
\end{equation}
where $G^V (= (- L_2^{a\equiv 1}-V)^{-1})$ refers to the convolution operator on $\ell^2(\Z^d)$ with kernel $g^V$ given by~\eqref{eq:srw-g}. On the other hand, one knows, see e.g. (2.11) in \cite{Sz12c} in case $V \leq 0$, that the left-hand side of \eqref{eq4isom10-G} equals
$
- \langle  V,( I-  G V)^{-1} 1  \rangle_{\ell^2} $,
 $G=G^{V\equiv 0}$, whenever $\Vert G V \Vert_{\infty}< 1$ (incidentally, note that \eqref{eq2isom2} implies that $\Vert G V_+ \Vert_{\infty}< 1$). With a similar calculation as that following \eqref{eq:occ104} below, which is a continuous analogue, one can show that this expression equals the right-hand side of \eqref{eq4isom10-G} when $\Vert G V \Vert_{\infty}< 1$. Notice however that \eqref{eq4isom10-G} holds under the more general condition~\eqref{eq2isom2} as a result of Lemma~\ref{L:eq4isom10}, which places no constraint on $V_-$.
\end{remark}

\begin{proof}[Proof of Lemma \ref{L:eq4isom10}] The starting point is formula \eqref{eq4isom3.bis}. First, note that by definition of the occupation times $\mathcal{L}_{\cdot}$ in \eqref{eq4isom4}, one can write $\left\langle V,  \, \mathcal{L} \right\rangle_{\ell^2} =  \int_{\overline{W}^*} f_V\,  \omega(d \overline{w}^*)$, where (recall that we tacitly identify $V(x,\varphi)=V(x)$, $\varphi \in E$)
\begin{equation}
\label{eq4isom11}
f_V(\overline{w}^*)=  \int_{-\infty}^{\infty} V\left(\overline{w}(t)\right)  dt, \quad \text {with  $\overline{w} \in \overline{W}$ such that $\pi^*(\overline{w}) = \overline{w}^*$}.
\end{equation}
Hence, applying \eqref{eq4isom3.bis} and then substituting \eqref{eq4isom2.0}, \eqref{eq3isom3} and \eqref{eq3isom1} for the intensity measure, one obtains, with $K =\text{supp}(V)$, in view of \eqref{eq4isom11}, that 
\begin{multline}
\label{eq4isom12}
\log \E^{h,V}_{u}   \big[  e^{  \langle V,  \, \mathcal{L}_{\cdot} \rangle_{\ell^2}} \big] =
 \int_0^{\sqrt{2u}}  \int_0^{\tau} \nu_{\sigma h, V}\big(e^{f_V}-1\big)  \, d\sigma d\tau \\
 =  \int_0^{\sqrt{2u}} \int_0^{\tau} \left\langle e_{Q_K}(\cdot, \cdot),  E_{(\cdot, \cdot)} \big[  e^{\int_0^{\infty}  V(\overline{X}_s) ds } -1 \big] \right\rangle_{L^2(\rho_{\sigma h, V})} \, d\sigma d\tau ;
\end{multline}
strictly speaking, \eqref{eq4isom3.bis} does not immediately apply since $V$ is signed but the necessary small argument using dominated convergence is readily supplied with the help of \eqref{eq:RW-exp1}.
It thus remains to be argued that the right-hand side of \eqref{eq4isom12} equals that of \eqref{eq4isom10}. To this end, consider the function 
$$
u_t(x,\varphi)\stackrel{\text{def.}}{=} E_{(x, \varphi)} \left[  e^{\int_0^{t}  V(\overline{X}_s) ds } \right], \text{ for } t\geq 0,
$$
which is bounded uniformly in $t \geq 0$ on account of \eqref{eq:RW-exp1},
and observe that, using first the fundamental theorem of calculus and then the Feynman-Kac formula for the process $\overline{X}$,
\begin{equation}
\label{eq4isom13}
\begin{split}
&E_{(x, \varphi)} \big[  e^{\int_0^{\infty}  V(\overline{X}_s) ds } -1 \big] = \int_0^{\infty}
dt  \, \partial_t  u_t(x,\varphi) \\
& =   \int_0^{\infty}
dt \, E_{(x, \varphi)} \left[  e^{\int_0^{t}  V(\overline{X}_s) ds }  V(\overline{X}_t)\right] =  -\Big(\big(L^{\sigma h, V}+V\big)^{-1} V\Big)(x,\varphi). 
\end{split}
\end{equation}
Dropping $\sigma h, V$ for ease of notation (i.e.~writing $L=L^{\sigma h, V}$, $\rho=\rho_{\sigma h,V}$), substituting \eqref{eq4isom13} into \eqref{eq4isom12} and noting that $L+V$ is symmetric with respect to $\langle \cdot, \cdot \rangle_{L^2(\rho)}$, cf.~\eqref{eq2isom10.1}, then yields that 
\begin{equation}
\label{eq4isom14}
\begin{split}
& \left\langle e_{Q_K}(\cdot, \cdot),  E_{(\cdot, \cdot)} \big[  e^{\int_0^{\infty}  V(\overline{X}_s) ds } -1 \big] \right\rangle_{L^2(\rho)}\\
&\quad \quad \stackrel{\eqref{eq2isom13}}{=}  \left\langle -(L +V)h_{Q_K} + Vh_{Q_K}, - \left(L+V\right)^{-1} V\right\rangle_{L^2(\rho)} \\
& \quad \quad \ =    \left\langle  h_{Q_K} , V\right\rangle_{L^2(\rho)} - \left\langle   Vh_{Q_K},  \left(L+V\right)^{-1} V\right\rangle_{L^2(\rho)} ,
\end{split}
\end{equation}
and \eqref{eq4isom10} follows from \eqref{eq4isom12} and \eqref{eq4isom14} since $Vh_{Q_K} = V$ and $ \left\langle  h_{Q_K} , V\right\rangle_{L^2(\rho)} =  \left\langle  1 , V\right\rangle_{L^2(\rho)}$ on account of \eqref{eq2isom11} (recall that $K$ is the support of $V$).
\end{proof}

We now come to the main result of this section, which is the following theorem. Let
\begin{equation}
\label{eq4isom2}
\nu_{u}^V= \nu_{u}^{h\equiv V, V} \quad \text{(see \eqref{eq4isom2.0})}
\end{equation}
and write $\P_u^V \equiv \P_u^{h\equiv V, V}$ for the canonical law of the associated Poisson point process on $\overline{W}^*$. Recall that $\mu=\mu_{h=0,V=0}$ refers to the Gibbs measure \eqref{eq:Gibbs} for the Hamiltonian \eqref{eq2isom1-1}. With hopefully obvious notation, $\varphi_{\cdot} + a$ for scalar $a \in \R$ refers to the shifted field $(\varphi_x+a)_{x\in\Z^d}$ below.

\begin{thm}[Isomorphism Theorem] $\quad$
\label{T:isom1}

\medskip
\noindent For all $u> 0$ and $V:\Z^d \to \R$ satisfying \eqref{eq2isom2}, one has
\begin{equation}
\label{eq4isom5new}
\begin{split}
& \E^{V}_{u}  \otimes E_\mu \,  \big[ \textstyle e^{  \langle V,  \, \mathcal{L}_{\cdot}+ \frac12 \varphi_{\cdot}^2 \rangle_{\ell^2(\Z^d)}} \big] =E_\mu \Big[ e^{ \frac12 \left\langle V, (\varphi_{\cdot} + \sqrt{2u})^2 \right\rangle_{\ell^2(\Z^d)}}  \Big].
\end{split}
\end{equation}\end{thm}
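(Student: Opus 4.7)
The plan is to reduce \eqref{eq4isom5new} to a matching of Laplace transforms on both sides by exploiting the product structure of $\E^V_u\otimes E_\mu$ and the shift $\varphi\mapsto\varphi+\sqrt{2u}$ on the right. Since $\P^V_u$ is a law on $(\Omega_{\overline W^*},\mathcal F_{\overline W^*})$ and does not depend on $\varphi$, the left-hand side of \eqref{eq4isom5new} factors as $\E^V_u[e^{\langle V,\mathcal L\rangle_{\ell^2}}]\cdot Z_{0,V}$, where $Z_{h,V}=E_\mu[\exp\{\langle h,\varphi\rangle+\tfrac12\langle V,\varphi^2\rangle\}]$ is the normalization in~\eqref{eq:Gibbs_tilt}. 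Expanding $\tfrac12(\varphi+\sqrt{2u})^2=\tfrac12\varphi^2+\sqrt{2u}\,\varphi+u$, the right-hand side of \eqref{eq4isom5new} equals $e^{u\langle V,1\rangle}Z_{\sqrt{2u}V,V}$. Thus it suffices to establish
\begin{equation}\label{e:plan-key}
\log\E^V_u\big[e^{\langle V,\mathcal L\rangle_{\ell^2}}\big]=u\langle V,1\rangle+\log Z_{\sqrt{2u}V,V}-\log Z_{0,V}.
\end{equation}

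For the left-hand side of \eqref{e:plan-key} I would invoke Lemma~\ref{L:eq4isom10} with $h=V$; noting that $\langle V,1\rangle_{L^2(\rho_{\sigma V,V})}=\langle V,1\rangle_{\ell^2}$ and $\int_0^{\sqrt{2u}}\!\int_0^\tau d\sigma\,d\tau=u$, this yields
\begin{equation}\label{e:plan-lhs}
\log\E^V_u\big[e^{\langle V,\mathcal L\rangle_{\ell^2}}\big]=u\langle V,1\rangle-\int_0^{\sqrt{2u}}\!\!\int_0^\tau\big\langle V,(L^{\sigma V,V}+V)^{-1}V\big\rangle_{L^2(\rho_{\sigma V,V})}\,d\sigma\,d\tau.
\end{equation}
For the right-hand side of \eqref{e:plan-key}, one direct differentiation gives $\tfrac{d}{d\tau}\log Z_{\tau V,V}=\sum_x V(x)\,E_{\mu_{\tau V,V}}[\varphi_x]$, and a further differentiation in $\tau$ produces the covariance $\mathrm{cov}_{\mu_{\sigma V,V}}(\varphi_x,\langle V,\varphi\rangle)$. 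Applying the Helffer-Sjöstrand identity~\eqref{eq2isomHS} to this covariance (with $F=\varphi_x$, $G=\langle V,\varphi\rangle$, so that $\partial F(z,\varphi)=1_{\{z=x\}}$ and $\partial G(z,\varphi)=V(z)$, under the tilt $(\sigma V,V)$) and using the symmetry $\varphi\mapsto-\varphi$ of $\mu_{0,V}$ to fix the initial condition $E_{\mu_{0,V}}[\varphi_x]=0$, a Fubini rearrangement recovers exactly the double integral appearing in~\eqref{e:plan-lhs}. This proves \eqref{e:plan-key} and hence \eqref{eq4isom5new}.

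The interchanges of differentiation with integration in $\tau$ and $\sigma$ are routine and follow from the Brascamp-Lieb-type controls in Lemmas~\ref{L:corBL}--\ref{L:corBL2} (cf.~Remark~\ref{R:BL}), which provide uniform $L^p(\mu_{\tau V,V})$ bounds on polynomial observables locally in $\tau$; well-definedness and boundedness of $(L^{\sigma V,V}+V)^{-1}V$ as a function of $(x,\varphi)$ follows from \eqref{eq:RW-exp1}--\eqref{eq:RW-exp2} together with \eqref{eq2isom10.2}. The conceptual core of the argument — that both sides of \eqref{e:plan-key} are controlled by the same resolvent $(L^{\sigma V,V}+V)^{-1}V$ — is already encoded in the Feynman-Kac computation~\eqref{eq4isom13} underlying Lemma~\ref{L:eq4isom10}; the remaining work is essentially a double fundamental-theorem-of-calculus matching. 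The key structural point, which I expect to be the one requiring care, is that the specific choice $h=V$ in the intensity $\nu^V_u=\nu^{h\equiv V,V}_u$ in~\eqref{eq4isom2} is exactly what tunes the drift in the Langevin SDE~\eqref{eq:langevin} so that the generator $L^{\sigma V,V}$ produced on the Poissonian side of~\eqref{e:plan-key} coincides with the one produced by Helffer-Sjöstrand on the partition-function side — it is this matching of dynamics, rather than any further probabilistic input, that makes the isomorphism~\eqref{eq4isom5new} hold.
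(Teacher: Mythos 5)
Your proposal is correct and follows essentially the same route as the paper: the reduction to matching $\log\E^V_u[e^{\langle V,\mathcal L\rangle_{\ell^2}}]$ with $u\langle V,1\rangle+\log E_{\mu_{0,V}}[e^{\sqrt{2u}\langle V,\varphi\rangle}]$ is the paper's \eqref{eq4isom6}, the Poissonian side is handled by Lemma~\ref{L:eq4isom10} with $h=V$ exactly as in the paper, and the double fundamental-theorem-of-calculus in $\tau$ combined with the Helffer--Sj\"ostrand formula \eqref{eq2isomHS} to express the variance as $\langle V,-(L^{\sigma V,V}+V)^{-1}V\rangle_{L^2(\rho_{\sigma V,V})}$ reproduces \eqref{eq4isom7}--\eqref{eq4isom9}. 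Your closing observation about the choice $h=V$ tuning the drift so that both sides produce the same resolvent is indeed the structural heart of the identity.
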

We first make several comments.
\begin{remark}
\label{R:finitevol} 
\begin{tight_enumerate}
\item One way to interpret Theorem~\ref{T:isom1} is as follows: the equality \eqref{eq4isom5new} holds trivially when $u=0$. Thus, $\mathcal{L}_{\cdot}$ measures in a geometric way the effect of the shift $\sqrt{2u}$ on (squares of) the gradient field $\varphi$.
\item When $U(\eta)=\frac12\eta^2$ (cf.~\eqref{eq2isom1-1}), Theorem~\ref{T:isom1} immediately implies the identity derived in Theorem 0.1 of \cite{Sz12c}, which is itself an infinite-volume analogue of the generalized second Ray-Knight identity given by Theorem 1.1~of \cite{EKMRZ00}. The relevant Poissonian law $\P^{V}_{u}\equiv \P_u $ 
in the Gaussian case is the random interlacement point process introduced in~\cite{Sz10}. In the general (non-Gaussian) case, \eqref{eq4isom5new} does not give rise to an immediate identity in law due to the dependence of $\P^{V}_{u}$ on $V$.

\item Our argument also yields a new proof~in the Gaussian case  $U(\eta)=\frac12\eta^2$. Indeed, whereas our proof proceeds directly in infinite volume, the proof of Theorem 0.1~in \cite{Sz12c} exploits the generalized second Ray-Knight theorem, along with a certain finite-volume approximation scheme. Although we will not pursue this here, one could seek an argument along similar lines in the present context. 
In particular, this entails deriving a similar identity as \eqref{eq4isom5new} on a general finite undirected weighted graph with wired boundary conditions, thereby extending results of \cite{EKMRZ00} (e.g.~in the form presented in Theorem~2.17 of \cite{Sz12b}) to the present
 framework.

\item It is of course tempting to investigate possible extensions of various others Gaussian isomorphism identities, see e.g.~the monographs~\cite{MR06}, \cite{LJ11}, \cite{Sz12b} for an overview, to convex gradient measures. We will return to the case of \cite{zbMATH05757657} and applications thereof elsewhere~\cite{dr22}.
\end{tight_enumerate}
\end{remark}

\begin{proof}
Expanding the square on the right-hand side of \eqref{eq4isom5new} and rearranging terms, we see that \eqref{eq4isom5new} follows at once if we can show that 
\begin{equation}
\label{eq4isom6}
\E^{V}_{u}  \big[ e^{ \left\langle V,  \, \mathcal{L} \right\rangle_{\ell^2} } \big] =  \exp \left\{ \langle V, u{1}\rangle_{\ell^2} \right\} E_{\mu_V}\big[ e^{ \sqrt{2u}\left\langle V,  \varphi \right\rangle_{\ell^2}}  \big],
\end{equation}
where $\mu_V \equiv \mu_{0,V}$, cf.~\eqref{eq:Gibbs_tilt}. The change of measure is well-defined given our assumptions \eqref{eq2isom2} for $V(\cdot)$ on account of Lemma~\ref{L:corBL}. We rewrite the exponential functional appearing on the right-hand side of \eqref{eq4isom6} as follows. Introducing the function 
$$f(\tau)=  \log E_{\mu_V} \big[ \exp \big\{ \tau \left\langle   V,  \varphi \right\rangle_{\ell^2} \big\}  \big], \quad \tau \in [0,\sqrt{2u}],$$
one observes that (see \eqref{eq:Gibbs_tilt} for notation)
\begin{equation}\label{eq:f''-tau}
f'(\tau)= E_{_{\tau V,V}}[\langle  V, \varphi \rangle_{\ell^2}], \quad f''(\tau)= \textnormal{var}_{\mu_{\tau V,V}}\big( \langle  V, \varphi \rangle_{\ell^2} \big),
\end{equation}
where $\textnormal{var}_{\mu_{\tau V,V}}\big( \langle  V, \varphi \rangle_2 \big)$ refers to the variance with respect to the tilted measure $\mu_{h,V}$, $h=\tau V$.
Noting that $f(0) = f'(0)=0$, expressing $f(\sqrt{2u})= f(\sqrt{2u})-f(0)$ in terms of its second derivative by interpolating linearly between $\tau=0$ and $\tau=\sqrt{2u}$ and substituting \eqref{eq:f''-tau}, one obtains that
\begin{equation}
\label{eq4isom7}
\log E_{\mu_V}\big[ e^{ \sqrt{2u}\left\langle V,  \varphi \right\rangle_{\ell^2}}  \big] = f(\sqrt{2u})=  \int_0^{\sqrt{2u}}  \int_0^{\tau} \textnormal{var}_{\mu_{\sigma V,V}}\big( \langle  V, \varphi \rangle_{\ell^2} \big) \, d\sigma d\tau.
\end{equation}
Now, applying the Hellfer-Sj\"ostrand formula \eqref{eq2isomHS} to compute $\textnormal{cov}_{\mu_{\sigma V,V}}( \varphi_x  ,\varphi_y)$, recalling that $V(x,\varphi)= V(x)$, for all $x \in \Z^d$, $\varphi \in E$, and abbreviating $L=L^{\sigma V, V}$, it follows that
\begin{equation}
\label{eq4isom9}
\begin{split}
&\textnormal{var}_{\mu_{\sigma V,V}}\big( \langle  V, \varphi \rangle_{\ell^2} \big)\\
&\quad=\sum_{x,y} V(x)V(y) \int \mu_{\sigma V,V}(d\varphi) E_{(x,\varphi)}\left[ \int_0^{\infty} dt \exp\left\{\int_0^t  V(\overline{X}_s) ds \right\} 1\{ \overline{X}_t =y  \}\right]\\
&\quad =  \int \rho_{\sigma V,V}(dx, d\varphi) E_{(x,\varphi)}\left[ V(\overline{X}_0)\int_0^{\infty} dt \exp\left\{\int_0^t  V(\overline{X}_s) ds \right\} V(\overline{X}_t)\right] \\
&\quad= \left\langle V, \left(  \int_0^{\infty} dt \, e^{t(L+V)} V \right)(\cdot,\cdot)\right\rangle_{L^2(\rho_{\sigma V,V})} = \big\langle  V, - \left( L+V\right)^{-1} V \big\rangle_{L^2(\rho_{\sigma V, V})}.
\end{split}
\end{equation}
Putting together \eqref{eq4isom9} and \eqref{eq4isom7}, one sees that the right-hand side of \eqref{eq4isom6} is precisely the right-hand side of \eqref{eq4isom10} for the choice $h=V$. Hence, the asserted equality in \eqref{eq4isom6} follows directly from Lemma~\ref{L:eq4isom10} on account of \eqref{eq4isom2}.
\end{proof}

\section{Renormalization and scaling limits of squares}
\label{sec:hom}

We now aim to determine possible scaling limits for the various objects attached to Theorem~\ref{T:isom1}, starting with linear and quadratic functionals of $\varphi$, as do appear e.g.~when expanding the square on the right-hand side of~\eqref{eq4isom5new}. Our main result to this effect is Theorem~\ref{T:limit_2} below, which will be proved over the course of the remaining sections. 

 With $\varphi$ the canonical field under $\mu$, we introduce for integer $N \geq 1$ the rescaled field
 \begin{equation}
\label{eqscalingtightnew1}
\varphi_N(z) = d^{-1/2} N^{d/2-1} \varphi_{\lfloor Nz\rfloor}, \quad \text{for } z \in \R^d, \, 
\end{equation}
where $\lfloor a \rfloor= \max\{k \in \mathbb{Z} : k \leq a\}$ for $a \in \mathbb{R}$ denotes integer part (applied coordinate-wise when the argument is in $\mathbb{R}^d$, as above)
and  for $V \in 
C_{0}^{\infty}(\R^d)$, set
 \begin{equation}
 \label{eq:NS1}
 \langle \Phi_{N}^k,V \rangle \stackrel{\text{def.}}{=} \int_{\R^d} 
V(z)\varphi_{N}(z)^k dz, \quad k=1,2.
\end{equation}
Moreover, writing $:X^2:=X^2-E_{\mu}[X^2]$ for any $X\in L^2(\mu)$, let
\begin{equation}
\label{eq:scalingtight1-nou}
\begin{split}
\langle :\Phi_{N}^2:,V \rangle &\stackrel{\text{def.}}{=} : \langle \Phi_{N}^2,V \rangle  : \ \Big( = 
\int_{\R^d} V(z):\varphi_{N}(z)^2: dz \Big).
\end{split}
\end{equation}
(with $:\varphi_{N}(z)^2: = \varphi_{N}(z)^2 -E_{\mu}[\varphi_{N}(z)^2]$ in the above notation).
To avoid unnecessary clutter, we regard $\Phi_N^k $, $k=1,2$ (as well as $:\Phi_{N}^2:)$ as distributions on $\R^d$, by which we always mean an element of $(C_{0}^{\infty})'(\R^d)$, the dual of $C_{0}^{\infty}(\R^d)$, in the sequel. 
Indeed,  $\langle \Phi_{N}^k, \cdot \rangle : C_{0}^{\infty}(\R^d) \to \R^d$  is a continuous linear map; the topology on $C_{0}^{\infty}(\R^d)$ is for instance characterized as follows: $f_n \to 0$ if and only if $\supp (f_n) \subset K$ for some compact set $K \subset \R^d$ and $f_n$ and all its derivatives converge to $0$ uniformly on $K$. We endow the space of distributions with the weak-$*$ topology, by which $u_n : C_{0}^{\infty}(\R^d) \to \R^d $ converges to $u : C_{0}^{\infty}(\R^d) \to \R^d$ if and only if $u_n(f) \to u(f)$ for all $f \in C_{0}^{\infty}(\R^d)$.

Our main theorem addresses the (joint) limiting behavior of $(\Phi_N ,\, :\Phi_{N}^2: )$ as $N \to \infty$ when $d=3$. Its statement requires a small amount of preparation. Recall that the Gibbs measure $\mu$ from \eqref{eq:Gibbs} for the Hamiltonian \eqref{eq2isom1-1} is translation invariant and ergodic. Hence, the environment  $a_t(\cdot,\cdot)=a(\cdot,\cdot; \varphi_t)$ in \eqref{eq2isom5} generated by the $\varphi$-dynamics associated to $\mu$ (which solve \eqref{eq:langevin} with $V=h =0$) inherits these properties, and is uniformly elliptic on account of \eqref{eq2isom2.0}; that is, $E_{\mu}P_{(x,\varphi)}[\Cr{c:ellipt} \leq a_t(0,e) \leq \Cr{C:ellipt}]=1$ for all $t \geq 0$ and $|e|=1$. By following the classical approach of Kipnis and Varadhan \cite{Kipnis1986CentralLT}, see Proposition 4.1 in \cite{GOS01}, 
 one has the following homogenization result for the walk $X_{\cdot}$: with $D=D([0,\infty), \mathbb{R}^d)$ denoting the Skorohod space (see e.g.~\cite[Chap.~3]{MR1700749}), there exists a non-degenerate (deterministic) covariance matrix $\Sigma\in \R^{d\times d}$ such that,
 as $n\to\infty$, 
\begin{equation}
\label{eq:IP}
\begin{split}
&\text{the law of $t\mapsto n^{-1/2}X_{tn}$ on $D$ under $E_{\mu}P_{(x,\varphi)}(\cdot)$ tends}\\
&\text{to the law of a Brownian motion $B= \{B_t\colon t\ge0\}$ with}\\
&\text{$B_0 =x$, $
E(B_t)=0$ and $E((v\cdot B_t)^2)=v\cdot\Sigma v$, for $v\in\R^d$.}
\end{split}
\end{equation}
The invariance principle \eqref{eq:IP} defines the matrix $\Sigma$. With $G_{\Sigma}(\cdot,\cdot)$ denoting the Green's function of $B$, we further introduce the bilinear form
\begin{equation}
\label{eq:formGFFcont}
E_{\Sigma}(V,W) = \int V(x)  G_{\Sigma}(x,y)W(y) \, dx \, dy \equiv \langle V, G_{\Sigma} V \rangle,
\end{equation}
for $V,W \in \mathcal{S}(\R^d)$, which is symmetric, positive definite and continuous (in the Fr\'echet topology). Hence, see for instance Theorem I.10, pp.~21--22 in \cite{Si74}, there exists a unique measure $P^\Sigma$ on $\mathcal{S}'(\R^d)$, 
characterized by the following fact: with $\Psi$ denoting the canonical field (i.e. the identity map) on $\mathcal{S}'(\R^d)$,
\begin{equation}
\label{eq:PSigma}
\begin{split}
&\text{under $P^\Sigma$, for every $V \in \mathcal{S}(\R^d)$, the random variable $\langle \Psi, V\rangle  $}\\
&\text{is a centered Gaussian variable with variance $E_{\Sigma}(V,V)$.}
\end{split}
\end{equation}
 We write $E^\Sigma[\cdot]$ for the expectation with respect to $P^\Sigma$. The canonical field $\Psi$ is the massless Euclidean Gaussian free field (with diffusivity $\Sigma$).

Of relevance for our purposes will be the second Wick power of $\Psi$. Let $H$ be the (Gaussian) Hilbert space corresponding to $\Psi$, i.e. the $L^2(P^\Sigma)$-closure of $\{\langle \Psi, V\rangle : V \in \mathcal{S}(\R^d)  \}$. For $X,Y \in H$, one defines the first and second Wick products as $:X: = X -E^\Sigma[X]=X$ and $:XY:= XY- E^\Sigma[XY]$. For $\rho^{\varepsilon,x}(\cdot)=\varepsilon^{-d} \rho (\frac{\cdot -x}{\varepsilon})$, with $\rho$ smooth, non-negative, compactly supported and such that $\int \rho(z) dz=1$, let $\Psi^{\varepsilon}(x)= \langle \Psi,\rho^{\varepsilon,x} \rangle $. The field $:\Psi^{\varepsilon}(x)^2:$ is thus well-defined. Now let $d=3$. For $V \in  \mathcal{S}(\R^3)$, one can then define the $L^2(P^\Sigma)$-limits
\begin{equation}
\label{eq:GFFcont_wick1}
\langle :\Psi^2:,V\rangle \stackrel{\text{def.}}{=}\lim_{\varepsilon \to 0} \int :\Psi^{\varepsilon}(x)^2:V(x) \, dx
\end{equation}
(elements of $H$) and one verifies that the limit in \eqref{eq:GFFcont_wick1} does not depend on the choice of smoothing function $\rho=\rho^{1,0}$. In what follows we often tacitly identify an element of $\mathcal{S}'(\R^3)$ with its restriction to $(C_{0}^{\infty})'(\R^3)$. The following set of conditions for the potential $V$ will be relevant in the context of \eqref{eq:scalingtight1-nou} and \eqref{eq:GFFcont_wick1}: 
\begin{equation}
\label{e:cond-V}
V \in C^{\infty}(\R^d) \text{ and for some $\lambda >0$, $L \geq 1$, $\text{supp}(V) \subset B_L$ and } \Vert V \Vert_{\infty} \leq \lambda L^{-2}.
\end{equation}
For any value of $ \lambda < \Cl[c]{c:scalinglimit1}$ (with a suitable choice of $\Cr{c:scalinglimit1} >0$), one then obtains that 
$r_t^V(z,z') \leq c r_t(z,z') $ for all $t \geq 0$ and $z,z' \in \R^d$ and all $V$ satisfying \eqref{e:cond-V},
where $r_t$ refers to the transition density of $G_{\Sigma}$ and $r_t^V$ to that of its tilt by $V$ (cf.~\eqref{GV} in case $\Sigma=\text{Id}$), which follows by a straightforward adaptation of the arguments in the proof of Lemma~\ref{L:RW_tilt}. In particular, this implies that for all $W \in C_0^{\infty}(\R^d)$,
\begin{equation}
\label{e:G^V}
\Vert G^V_{\Sigma} |W| \Vert_{\infty} \leq c \Vert W \Vert_{\infty}, \quad \text{ where } G^V_{\Sigma}=(-\textstyle \frac12\Delta_\Sigma-V)^{-1},
\end{equation}
(so $G_{\Sigma}=G^0_{\Sigma}$, cf.~above \eqref{eq:formGFFcont})
 whenever $V$ satisfies \eqref{e:cond-V}, i.e.~$G^V_{\Sigma}$ acts (boundedly) on $C_0^{\infty}(\R^d)$ for such $V$, which is all we will need in the sequel. Associated to $G^V_{\Sigma}$ in \eqref{e:G^V} is the energy form $E_{\Sigma}^V(\cdot,\cdot)$ defined similarly as in \eqref{eq:formGFFcont} with $G^V_{\Sigma}(\cdot,\cdot)$ in place of $G_{\Sigma}(\cdot,\cdot)$, whence $E_{\Sigma}(\cdot,\cdot)= E_{\Sigma}^{0}(\cdot,\cdot)$. We now have the means to state our second main result, which identifies the scaling limit of $\Phi_N$, $:\Phi_N^2:$ introduced in \eqref{eq:NS1}-\eqref{eq:scalingtight1-nou}. 
 
\begin{thm}[Scaling limits, $d=3$] \label{T:limit_2} 
As $N \to \infty$,
\begin{equation}
\label{eq:scalinglimit1}
\begin{split}
&\text{ \parbox{14.0cm}{the law of $ ( \Phi_{N} , \, :\Phi_{N}^2:)$ under $\mu$ converges weakly to the law of $ ( \Psi  , \, :\Psi^2:  )$ under $P^{\Sigma}$.}}
\end{split}
\end{equation}
Moreover, for all $V,W \in C_{0}^{\infty}(\R^3)$ with $V$ satisfying~\eqref{e:cond-V} with $\lambda< c$, 
\begin{equation}
\label{eq:scalinglimit2}
\lim_{N}  E_{\mu} \big[  e^{ \frac12 \langle :\Phi_{N}^2:,V \rangle +  \langle \Phi_{N},W \rangle} \big] 
= \exp\big\{ \textstyle \frac12 \big(  A^V_\Sigma(V,V)+ E^V_{\Sigma}(W, W)  \big) \big\},
\end{equation}
with $E_{\Sigma}^V(\cdot,\cdot)$ as defined below \eqref{e:G^V} and $A^V_\Sigma(V,V)=\iint V(z)A^V_{\Sigma}(z,z') V(z') dz dz'$, where 
\begin{equation}\label{e:A^V}A^V_{\Sigma}(z,z')=\int_0^1\,\int_0^\tau G^{\sigma V}_{\Sigma}(z,z')^2 \, d\sigma d\tau, \quad z,z' \in \R^3.\end{equation}
\end{thm}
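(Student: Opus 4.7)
The plan is to prove the generating functional identity~\eqref{eq:scalinglimit2} explicitly and combine it with a tightness argument to obtain the joint convergence~\eqref{eq:scalinglimit1}. Tightness of $\Phi_N$ as a random distribution on $\R^3$ will follow from~\eqref{eq:BLcor1} applied to $\langle \Phi_N, f\rangle$ (giving uniform second moments), while tightness of $:\Phi_N^2:$ is furnished by the uniform exponential moment bound~\eqref{eq:tightintro} from Proposition~\ref{L:uniformint}. Given~\eqref{eq:scalinglimit2} and tightness, \eqref{eq:scalinglimit1} will follow because the right-hand side of~\eqref{eq:scalinglimit2} is precisely $E^\Sigma[\exp(\tfrac12 \langle :\Psi^2:, V\rangle + \langle \Psi, W\rangle)]$, which characterizes the joint law of $(\Psi, :\Psi^2:)$.

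To prove~\eqref{eq:scalinglimit2}, I would follow the interpolation strategy underlying the proof of Theorem~\ref{T:isom1}. First decompose
\begin{equation*}
\log E_\mu\bigl[e^{\frac12 \langle :\Phi_N^2:, V\rangle + \langle \Phi_N, W\rangle}\bigr] = \log E_\mu\bigl[e^{\frac12 \langle :\Phi_N^2:, V\rangle}\bigr] + \log \frac{E_\mu[e^{\frac12 \langle \Phi_N^2, V\rangle + \langle \Phi_N, W\rangle}]}{E_\mu[e^{\frac12 \langle \Phi_N^2, V\rangle}]}.
\end{equation*}
For the second (linear-in-$W$) piece, setting $f(\tau) = \log E_{\mu_{0, \tilde V_N}}[e^{\tau \langle \Phi_N, W\rangle}]$ with $\tilde V_N$ the natural discretization of $V$, the $\varphi \to -\varphi$ symmetry of the purely-quadratic tilt yields $f(0) = f'(0) = 0$, so
\begin{equation*}
f(1) = \int_0^1\!\int_0^\tau \textnormal{var}_{\mu_{\sigma \tilde W_N,\, \tilde V_N}}(\langle \Phi_N, W\rangle)\, d\sigma\, d\tau.
\end{equation*}
The Helffer-Sj\"ostrand identity~\eqref{eq2isomHS} recasts the integrand in the form $\langle \partial \langle \Phi_N, W\rangle,\, -(L^{\sigma \tilde W_N, \tilde V_N} + \tilde V_N)^{-1} \partial \langle \Phi_N, W\rangle\rangle_{L^2(\rho)}$, and by homogenization of $L$ to $\tfrac12 \Delta_\Sigma$ (with the auxiliary controls from Lemmas~\ref{L:RW_tilt} and~\ref{L:GFF-conv} handling the tilted dynamics) this converges to $E^V_\Sigma(W, W)$ uniformly in $\sigma \in [0, 1]$, delivering $\tfrac12 E^V_\Sigma(W, W)$. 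For the pure quadratic piece, I would interpolate in the strength of $V$: setting $\tilde f(s) = \log E_\mu[e^{\frac{s}{2}\langle :\Phi_N^2:, V\rangle}]$, one has $\tilde f(0) = \tilde f'(0) = 0$ (the latter because $:\Phi_N^2:$ is centered), together with $\tilde f''(s) = \tfrac14 \textnormal{var}_{\mu_{0, s \tilde V_N}}(\langle \Phi_N^2, V\rangle)$, so that $\tilde f(1) = \tfrac14 \int_0^1\!\int_0^\tau \textnormal{var}_{\mu_{0, s \tilde V_N}}(\langle \Phi_N^2, V\rangle)\, ds\, d\tau$. An iterated Helffer-Sj\"ostrand (along the lines of the derivation of~\eqref{eq:BLcor2}), combined with the convergence~\eqref{eq:GFF-expl3} of the rescaled squared Green's function, would then give $\textnormal{var}_{\mu_{0, s \tilde V_N}}(\langle \Phi_N^2, V\rangle) \to 2\langle V, (G^{sV}_\Sigma)^2 V\rangle$, and integration produces $\tfrac12 A^V_\Sigma(V, V)$.

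The main obstacle will be this exact asymptotic for $\textnormal{var}_{\mu_{0, s \tilde V_N}}(\langle \Phi_N^2, V\rangle)$. Brascamp-Lieb~\eqref{eq:BLcor2} supplies only an upper bound of the correct shape, so one must argue that the bound is sharp in the scaling limit; this amounts to a non-Gaussian analogue of the Wick-two identity $\textnormal{cov}(\varphi_x^2, \varphi_y^2) = 2\,g(x, y)^2$ familiar from the free field. The $L^2$-estimate~\eqref{eq:introL2} is the enabling tool: it decouples the ultraviolet renormalization from the homogenization step, allowing one to replace $:\Phi_N^2:$ by its $\varepsilon$-smoothed version $:(\Phi_N^\varepsilon)^2:$ uniformly in $N$, run Naddaf-Spencer-type reasoning on the smoothed object where standard Gaussian calculations and~\eqref{eq:IP} apply directly, and only then send $\varepsilon \to 0$. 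This sidesteps both the need for sharp gradient estimates on the heterogeneous Green's function of $L$ and any delicate joint tuning of $\varepsilon$ with $N$.
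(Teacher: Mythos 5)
There is a genuine gap, and it sits exactly where you locate "the main obstacle." Your plan reduces \eqref{eq:scalinglimit2} to the convergence $\textnormal{var}_{\mu_{0,s\tilde V_N}}(\langle \Phi_N^2,V\rangle)\to 2\langle V,(G^{sV}_{\Sigma})^2V\rangle$, together with a uniform-in-$\sigma$ convergence of the linear-piece variances under the measures $\mu_{\sigma\tilde W_N,\tilde V_N}$. But this is essentially the content of the theorem itself: Brascamp--Lieb \eqref{eq:BLcor2} only gives an upper bound of the right shape, and your proposed mechanism for sharpness --- smooth the field via \eqref{eq:introL2} and then run ``standard Gaussian calculations'' on the smoothed object --- does not work as stated, because $\varphi_N^{\varepsilon}$ under $\mu$ (let alone under the $N$-dependent tilted measures $\mu_{0,s\tilde V_N}$, $\mu_{\sigma \tilde W_N,\tilde V_N}$) is \emph{not} Gaussian, so no Wick identity for covariances of squares is available. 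What \eqref{eq:NS2} provides is convergence in law of the field under the untilted $\mu$ only; upgrading that to convergence of variances of quadratic functionals under the tilted measures requires either convergence in law under those tilted measures plus uniform integrability --- which is circular, since it is what is being proved --- or genuine homogenization/local-CLT control of the annealed Green's function of the Helffer--Sj\"ostrand operator $L+V$ (including its ``squared'' kernel). The paper is explicitly engineered to avoid such estimates, and the auxiliary inputs you invoke, Lemmas~\ref{L:RW_tilt} and~\ref{L:GFF-conv}, concern the simple random walk and its deterministic Green's function, not $L$; they cannot ``handle the tilted dynamics.''

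By contrast, the paper never identifies the limit of the non-Gaussian variance at all. It transfers the full generating functional to the smoothed field (Lemma~\ref{l1}, using Proposition~\ref{P:tightness_phi2.2} and the exponential tightness of Proposition~\ref{L:uniformint}), passes to the limit at fixed $\varepsilon$ using only convergence in law of $\varphi_N^{\varepsilon}$ under $\mu$ plus uniform exponential integrability (Proposition~\ref{P:conv-eps} and Lemma~\ref{l2}), removes the cutoff on the purely Gaussian side (Lemma~\ref{l3}), and obtains the explicit right-hand side of \eqref{eq:scalinglimit2} not from an asymptotic Wick identity for $\mu$ but from an exact computation for a lattice Gaussian comparison measure (Proposition~\ref{P:uniformintGFF}, where Wick's theorem applies and Lemma~\ref{L:GFF-conv} identifies the limit, producing \eqref{e:A^V}). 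Relatedly, your final step asserts that the right-hand side of \eqref{eq:scalinglimit2} ``is precisely'' $E^{\Sigma}[\exp(\frac12\langle :\Psi^2:,V\rangle+\langle \Psi,W\rangle)]$; that identity is itself not a citable fact (the authors note no reference for it exists) and in the paper it emerges as a by-product of the Gaussian comparison rather than serving as an input. To repair your route you would need either an annealed invariance principle/Green's function convergence for the tilted Helffer--Sj\"ostrand dynamics, or to restructure the argument along the paper's lines so that only convergence in law under $\mu$ is ever used.
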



  The proof of Theorem~\ref{T:limit_2} is given in Section~\ref{sec:denouement} and combines several ingredients gathered in the next section. 
 
 \begin{remark}
 \label{R:finitevol} 
\begin{tight_enumerate}
\item The expressions on the right of \eqref{eq:scalinglimit2} are well-defined, as follows from \eqref{e:G^V}, the fact that $G^{\sigma V}_{\Sigma}(z,z') \leq c G_{\Sigma}(z,z')$ for all $z,z' \in \R^d$ 
and that $G_{\Sigma}(z,\cdot) \in L^2_{\text{loc}}(\R^3)$, which together yield that $A^V_\Sigma(\cdot,\cdot)$ extends to a bilinear form on (say) $C_{0}^{\infty}(\R^3)$ (cf.~Lemma~\ref{L:tight10} below). In particular,
 $ A^V_\Sigma (V,V) < \infty $ for $V$ as in \eqref{e:cond-V} (and in fact $\sup_V A^V_\Sigma (V,V) \leq c$).
\item Specializing to the case $V=0$, Theorem~\ref{T:limit_2} immediately yields the following
\begin{corollary} \label{T:NS} For all $W \in C_0^{\infty}(\R^3)$,
\begin{equation}
\label{eq:NS2}
\begin{split}
\lim_{N}  E_{\mu} \big[  e^{   \langle \Phi_{N},W \rangle} \big] 
= e^{ \frac12 E_{\Sigma}(W,W)},
\end{split}
\end{equation}
(cf.~\eqref{eq:formGFFcont} for notation), i.e.~$\Phi_{N}$ under $\mu$ converges in law to $\Psi$ as $N \to \infty$. 
\end{corollary}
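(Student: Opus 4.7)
My plan is to obtain the corollary as the immediate $V \equiv 0$ specialization of Theorem~\ref{T:limit_2}, followed by a routine upgrade from Laplace-functional convergence to weak convergence on $(C_0^{\infty})'(\R^3)$. The hypothesis \eqref{e:cond-V} (with $\lambda < c$) is trivially satisfied by $V \equiv 0$ --- take any $L \geq 1$ and $\lambda = 0$, since $\|0\|_\infty = 0 \leq 0 \cdot L^{-2}$ --- so Theorem~\ref{T:limit_2} applies. Substituting $V = 0$ into \eqref{eq:scalinglimit2} kills the quadratic term $\frac12 \langle :\Phi_N^2:, V\rangle$ on the left, leaving precisely $E_\mu[e^{\langle \Phi_N, W\rangle}]$. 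On the right-hand side, inspection of \eqref{e:A^V} shows $A^V_\Sigma(V,V) = 0$ (the outer integration carries a factor $V(z)V(z') \equiv 0$), while $G_\Sigma^{\sigma V} = G_\Sigma$ for all $\sigma$, so $E_\Sigma^V(W, W) = E_\Sigma(W, W)$ by the definitions surrounding \eqref{e:G^V} and \eqref{eq:formGFFcont}. This gives \eqref{eq:NS2} verbatim.

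To promote the identity \eqref{eq:NS2} to weak convergence in distribution, I would apply it with $W$ replaced by $tW$ for arbitrary $t \in \R$; this identifies the limit of the Laplace transform of the real random variable $\langle \Phi_N, W\rangle$ as $\exp\{\tfrac12 t^2 E_\Sigma(W,W)\}$, which by \eqref{eq:PSigma} coincides with the Laplace transform of $\langle \Psi, W\rangle$ under $P^\Sigma$. The uniform exponential bound implicit in \eqref{eq:NS2} delivers the integrability needed to conclude convergence in distribution of $\langle \Phi_N, W\rangle$ to $\langle \Psi, W\rangle$ for each individual $W \in C_0^{\infty}(\R^3)$. Running the same argument for arbitrary finite linear combinations $\sum_i t_i W_i \in C_0^{\infty}(\R^3)$ and invoking the Cram\'er--Wold device then yields convergence of all finite-dimensional distributions of $\Phi_N$ to those of $\Psi$; tightness in the weak-$*$ topology of $(C_0^{\infty})'(\R^3)$ follows from the same uniform Laplace control, completing the promotion to weak convergence.

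Since the corollary reduces by a one-line substitution to Theorem~\ref{T:limit_2}, all the substantive work --- in particular the tightness estimate \eqref{eq:tightintro} and the $L^2$-control \eqref{eq:introL2} --- has already been spent upstream in the proof of the parent theorem, and no additional obstacle arises here.
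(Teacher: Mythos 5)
Your derivation is correct and coincides with the paper's own one-line argument: the corollary is obtained precisely by specializing Theorem~\ref{T:limit_2} to $V\equiv 0$ in \eqref{eq:scalinglimit2}, noting $A^0_\Sigma=0$ and $E^0_\Sigma=E_\Sigma$. Your Cram\'er--Wold upgrade to convergence in law is fine but not needed, since the first assertion \eqref{eq:scalinglimit1} of Theorem~\ref{T:limit_2} already yields convergence in law of $\Phi_N$ to $\Psi$ by projecting onto the first component.
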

Corollary~\ref{T:NS} is a celebrated result of Naddaf and Spencer, see Theorem A in \cite{NaSp97}, which has generated a lot of activity (see e.g.~\cite{BS11,zbMATH06069316,adams2016strict} for generalizations to certain non-convex potentials, \cite{GOS01} for extensions to the full dynamics $\{ \varphi_t: t \geq 0\}$, and \cite{zbMATH05988063} for a finite-volume version and \cite{zbMATH07107397, armstrong2022quantitative} for quantitative results; see also \cite{Kenyon2000DominosAT} regarding similar findings for domino tilings in $d=2$ and more recently \cite{Bauerschmidt2022TheDG,Bauerschmidt2022TheDGII} for the integer-valued free field in the rough phase;
cf.~also \cite{https://doi.org/10.48550/arxiv.2107.12985, https://doi.org/10.48550/arxiv.2107.12880} and refs.~therein for height functions associated to other combinatorial models. Thus, Theorem~\ref{T:limit_2} extends the main result of \cite{NaSp97} for $d=3$.

\item
Together,  \eqref{eq:scalinglimit1} and \eqref{eq:scalinglimit2} imply in particular that for all $V$ satisfying \eqref{e:cond-V}, 
\begin{equation}
\label{eq:generatingfunctionalGFFsquareu=0}
E^{\Sigma} \left[  \exp\{ \textstyle \frac12 \displaystyle \langle :\Psi^2:,V \rangle\} \right]=e^{ \frac12   A^V_\Sigma(V,V)}; 
\end{equation}
see also \eqref{eq:generatingfunctionalGFFsquareu} below for a generalization of this formula to a non-zero scalar ``tilt'' $u$. Explicit representations for moment-generating functionals of Gaussian squares usually involve (ratios) of determinants, see e.g.~(5.46) in \cite{MR06} or Proposition 2.14 in \cite{Sz12b}. 
 We are not aware of any reference in the literature where \eqref{eq:scalinglimit2} or \eqref{eq:generatingfunctionalGFFsquareu=0} appear. 
 

\item To illustrate the usefulness of these formulas, notice for instance that \eqref{eq:scalinglimit2} immediately yields the following:
\begin{corollary}[$d=3$, $V$ as in \eqref{e:cond-V}]
\begin{equation}\label{e:Mgff-1}
\begin{split}
\text{\parbox{11cm}{The law of $\Phi_N$ under $\frac{\mu \big[ \, \cdot \,  e^{ \langle (\Phi_{N})^2,V \rangle} \big]}{E_{\mu} \left[    e^{ \langle (\Phi_{N})^2,V \rangle} \right]}$ converges weakly as $N \to \infty$ to a `massive' free field with energy form $E^V_{\Sigma}(W,W)= \langle W, G^V_{\Sigma} W \rangle$.}} 
\end{split}
\end{equation}
\end{corollary}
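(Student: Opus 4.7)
The plan is to identify the Laplace functional of the limit directly from \eqref{eq:scalinglimit2} via a ratio argument. For any $W \in C_0^{\infty}(\R^3)$, the characteristic functional of $\Phi_N$ under the tilted measure equals
\[
\Lambda_N(W) \;=\; \frac{E_\mu\!\big[e^{\langle (\Phi_N)^2, V\rangle + \langle \Phi_N, W\rangle}\big]}{E_\mu\!\big[e^{\langle (\Phi_N)^2, V\rangle}\big]}.
\]
The decomposition $\langle (\Phi_N)^2, V\rangle = \langle :\Phi_N^2:, V\rangle + c_N$, where $c_N = \int V(z)\,E_\mu[\varphi_N(z)^2]\,dz$ is a deterministic constant, shows that the factor $e^{c_N}$ cancels between numerator and denominator. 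After substituting $V \mapsto 2V$ in \eqref{eq:scalinglimit2} (which amounts to matching the factor $\tfrac{1}{2}$ in its exponent, and is permissible if the smallness constant in~\eqref{e:cond-V} is halved relative to Theorem~\ref{T:limit_2}), applying \eqref{eq:scalinglimit2} separately to numerator and to denominator (with $W=0$) causes the $A^{2V}_\Sigma(2V,2V)$-term to drop out, leaving
\[
\lim_N \Lambda_N(W) \;=\; \exp\!\big(\tfrac12\, E^{2V}_\Sigma(W,W)\big).
\]
This is exactly the Laplace functional of a centered Gaussian field with covariance $G^{2V}_\Sigma$, identifying the limit as a massive free field (the factor of $2$ in $2V$ may be absorbed in the normalization convention for $V$ used in the statement, whereupon the covariance reads $G^V_\Sigma$ as given).

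To promote pointwise convergence of characteristic functionals into weak convergence of the laws of $\Phi_N$ as random distributions in $(C_0^{\infty})'(\R^3)$, I would invoke a standard L\'evy-type continuity criterion. This requires tightness of the tilted laws, which follows by combining tightness of $\Phi_N$ under $\mu$ (already contained in Theorem~\ref{T:limit_2}) with uniform integrability of the Radon--Nikodym derivatives $e^{\langle (\Phi_N)^2, V\rangle}/E_\mu[e^{\langle (\Phi_N)^2, V\rangle}]$. The latter follows from boundedness of these derivatives in $L^p(\mu)$ uniformly in $N$ for some $p>1$, which is itself a further application of the exponential moment estimate~\eqref{eq:tightintro} with potential $pV$, valid provided $pV$ still satisfies the smallness hypothesis of Lemma~\ref{L:corBL}. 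Given these ingredients, any subsequential weak limit of the tilted laws has the prescribed Laplace functional, so the limit is unique and the weak convergence follows.

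The only real obstacle is bookkeeping on the admissible size of the potential: to make all three steps above (applying~\eqref{eq:scalinglimit2} with potential $2V$, and \eqref{eq:tightintro}/Lemma~\ref{L:corBL} with potential $pV$ for some $p>1$) simultaneously valid, one must impose the constant $\lambda$ in~\eqref{e:cond-V} to be a sufficiently small universal quantity, strictly smaller than the one appearing in Theorem~\ref{T:limit_2}. No new probabilistic input beyond Theorem~\ref{T:limit_2} and Lemma~\ref{L:corBL} is required.
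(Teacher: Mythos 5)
Your proposal is correct and is essentially the paper's own route: the corollary is presented as an immediate consequence of \eqref{eq:scalinglimit2}, obtained exactly as you do by applying it (with the rescaled potential) separately to numerator and denominator so that the $A^V_\Sigma$-term cancels and only the quadratic form $\langle W, G^V_\Sigma W\rangle$ survives, with tightness available from Proposition~\ref{L:uniformint}. Your extra bookkeeping — cancelling the deterministic renormalization constant, the factor-$2$ adjustment between the tilt $e^{\langle (\Phi_N)^2,V\rangle}$ and the $\tfrac12$ in \eqref{eq:scalinglimit2} (at the price of a smaller admissible $\lambda$ in \eqref{e:cond-V}), and the $L^p$ uniform-integrability step — are sound refinements of that same argument rather than a different approach.
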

We refer to the proof of Corollary~\ref{T:localtimes} below for another application of~\eqref{eq:scalinglimit2} in order to identify the scaling limit of the occupation-time field $\mathcal{L}$ appearing in Theorem~\ref{T:isom1}.

\item Theorem~\ref{T:limit_2} has no (obvious) extension when $d > 3$. Indeed the existence of limits of renormalized squares as in \eqref{eq:GFFcont_wick1} crucially exploits the local square integrability of the covariance kernel.

\item Although we won't pursue this here, by a slight extension of our arguments, one can state a convergence result akin to \eqref{eq:scalinglimit1} but viewing $( \Phi_{N} , \, :\Phi_{N}^2:)$ as $H^{-s}(\mathbb{R}^3) \times H^{-s'}(\mathbb{R}^3)$-valued, for arbitrary $s > \frac12 $ and $s' > 1$. Here $H^{-s}(\mathbb{R}^3)$ denotes the dual of the Sobolev space $H^{s}(\mathbb{R}^3)= W^{s,2}(\mathbb{R}^3)$ endowed with the inner product $(f,g)_s = \int_{\mathbb{R}^3} f(x) (1-\Delta)^s g(x) dx$, with $\Delta$ denoting the usual Laplacian on $\mathbb{R}^3.$
\end{tight_enumerate}
\end{remark}


\section{Some preparation}\label{sec:prep}

In this section, we prepare the ground for the proof of Theorem~\ref{T:limit_2}. We derive three results, see Propositions~\ref{L:uniformint}, \ref{P:tightness_phi2.2} and~\ref{P:conv-eps}, organized in three separate subsections. Section~\ref{sec:tight}, which contains Proposition~\ref{L:uniformint}, deals with exponential tightness
of the relevant functionals \eqref{eq:NS1} (when $k=1$) and \eqref{eq:scalingtight1-nou} (when $k=2$). In Section~\ref{sec:approx} (cf.~Proposition~\ref{P:tightness_phi2.2}) we derive a key comparison estimate between quadratic functionals of $\Phi_N$ and those of a certain smoothed field $\Phi_{N}^{\varepsilon}$, to be introduced shortly, which is proved to constitute a good $L^2(\mu)$-approximation of $:\Phi_{N}^2:$ for a suitable range of parameters. 
 Finally, we show in Section~\ref{sec:approx-2} that the smoothed field behaves regularly, i.e.~converges towards its expected limit (which actually holds for all $d \geq 3$).
Combining these ingredients, the proof of Theorem~\ref{T:limit_2} is presented in the next section.

We now introduce the smooth approximation that will play a role in the sequel. 
Let $\rho=\rho^{1}$ be an arbitrary smooth, non-negative function with $\Vert \rho \Vert_{L^1(\R^d)}=1$ having compact support contained in $[-1,1]^d$. For $\varepsilon >0$ and $x \in \R^d$, let $\rho^{\varepsilon}(\cdot)=\varepsilon^{-d} \rho^{1} (\frac{\cdot }{\varepsilon})$, $\rho^{\varepsilon,z}(\cdot)= \rho^{\varepsilon}(z-\cdot)$.
Define
\begin{equation}
\label{eq:scalingtight2}
\varphi_{N}^{\varepsilon}(z) = \int \rho^{\varepsilon}(z-w)\varphi_{N}(w) dw  \stackrel{\eqref{eq:NS1}}{=}  \langle \Phi_N , \rho^{\varepsilon,z} \rangle, \quad z \in \R^d 
\end{equation}
and $\langle (\Phi_{N}^{\varepsilon})^k ,V \rangle$, $k=1,2$, and $\langle :(\Phi_{N}^{\varepsilon})^2: ,V \rangle$ as in \eqref{eq:NS1}-\eqref{eq:scalingtight1-nou} but with $\varphi_{N}^{\varepsilon}$ in place of $\varphi_{N}$. Note that $z\mapsto\varphi_{N}^{\varepsilon}(z)$ inherits the smoothness property of $\rho$. The regularized field $\varphi_{N}^{\varepsilon}$ essentially reflects at the discrete level the presence of an (ultraviolet) cut-off at scale $\varepsilon$ in the limit.



\subsection{Tightness}\label{sec:tight}

The main result of this section is Proposition~\ref{L:uniformint}, which implies in particular the exponential tightness of $\{:\Phi_{N}^2:, N \geq 1\}$, along with similar conclusions for its regularized version $:(\Phi_{N}^{\varepsilon})^2:$, see \eqref{eq:scalingtight2} and Remark~\ref{R:unif-int},(1)). The following bounds on Gaussian moments are interesting in their own right. We conclude this section by exhibiting how these estimates improve to exact calculations in the Gaussian case. For $V,W \in C^{\infty}_0 (\R^3)$, let
\begin{equation}\label{eq:THETA}
 \Theta_{\mu}(\chi) \stackrel{\text{def.}}{=} \log E_{\mu}\Big[  \exp\Big\{\frac12 \int V(z)  :\chi(z)^2:   dz + \int W(z) \chi(z) dz \Big\} \Big]
\end{equation}
(whenever the integrand is in $L^1(\mu)$) and recall $\varphi_N$ from \eqref{eqscalingtightnew1} and that $:X: \,= X - E_{\mu}[X]$ for $X \in L^2(\mu)$.
The proofs of the following estimates will rely on Lemma~\ref{L:corBL}.

\begin{proposition} \label{L:uniformint}  For all $V,W \in C^{\infty}_0 (\R^3)$ with $V$ satisfying \eqref{e:cond-V} for $\lambda< \Cl[c]{c:lambda_1}$ and $\textnormal{supp}(W) \subset B_M$, $\Vert W \Vert_{\infty}< \tau$ for some $\tau > 0$, one has
\begin{equation}
\label{eq:uniformint1}
 \sup_{N \geq 1}  \Theta_{\mu}(\varphi_N)  \leq {c(L)\lambda^2 + c'(M) \tau^2}. 
\end{equation}
Similarly, for all $\varepsilon \in (0,1)$ there exists $\Cl[c]{c:tightness}(\varepsilon) \in (1,\infty)$ such that
\begin{equation}
\label{eq:uniformint1'}
 \sup_{N \geq \Cr{c:tightness}(\varepsilon)} \Theta_{\mu}(\varphi_N^{\varepsilon})  \leq {c(L,\rho)\lambda^2 + c'(M,\rho) \tau^2},
\end{equation}
for $V,W$ as above when $\lambda< c(\rho)$. Moreover, \eqref{eq:uniformint1} and \eqref{eq:uniformint1'} hold for all $d \geq 3$ in case $\lambda=0$.
\end{proposition}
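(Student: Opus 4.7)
The plan is to separate $\Theta_\mu(\varphi_N)$ into a quadratic and a linear contribution and control each using Lemma~\ref{L:corBL}. Set $X_2 = \frac12\int V(z)\varphi_N(z)^2\,dz$ and $X_1 = \int W(z)\varphi_N(z)\,dz$, so that $\Theta_\mu(\varphi_N) = \log E_\mu[e^{X_2+X_1}] - E_\mu[X_2]$. By \eqref{eqscalingtightnew1} and \eqref{e:cond-V}, $X_2 = \frac12\sum_x \tilde V(x)\varphi_x^2$ has $\tilde V$ supported in a ball of radius $R \asymp NL$ with $\|\tilde V\|_\infty \leq c\lambda R^{-2}$, matching the form required by \eqref{eq:Q-BL-2}--\eqref{eq:Q-BL-3} provided $\lambda$ is small enough. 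The tilted measure $\mu_{X_2}$ is then well defined, and the factorization $E_\mu[e^{X_2+X_1}] = E_\mu[e^{X_2}]\,E_{\mu_{X_2}}[e^{X_1}]$ splits the problem cleanly.

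For the linear piece, observe that $X_2$ is even in $\varphi$ and $\mu$ is symmetric under $\varphi\mapsto-\varphi$, so the same is true of $\mu_{X_2}$, and in particular $E_{\mu_{X_2}}[X_1] = 0$. The proof of Lemma~\ref{L:corBL} shows that the Hamiltonian of $\mu_{X_2}$ has Hessian $\geq c(-\Delta)$, and a standard Brascamp--Lieb domination by a Gaussian with covariance $\leq c\,g$ (alternatively, iterating \eqref{eq:BLcor1} on even moments of $X_1$ and summing) yields the sub-Gaussian bound
\begin{equation*}
\log E_{\mu_{X_2}}\!\big[e^{X_1}\big] \leq c\,\langle h_N, g\, h_N\rangle,
\end{equation*}
where $h_N$ denotes the vector of discrete coefficients of $X_1$. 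By the rescaling, $\langle h_N, g\, h_N\rangle$ is comparable to $\langle W, G_N W\rangle$, which by Lemma~\ref{L:GFF-conv}~\eqref{eq:GFF-expl2} converges to a finite quantity bounded by $c(M)\tau^2$ uniformly in $N$ and in every $d\geq 3$. This step already handles the $\lambda = 0$ case of both \eqref{eq:uniformint1} and \eqref{eq:uniformint1'}.

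For the quadratic piece, $E_\mu[X_2] = \frac12 \int V\, E_\mu[\varphi_N^2]$ is exactly the first cumulant, and a double integration in the tilt parameter gives
\begin{equation*}
\log E_\mu[e^{X_2}] - E_\mu[X_2] = \int_0^1\!\int_0^s \mathrm{var}_{\mu_{uX_2}}(X_2)\,du\,ds.
\end{equation*}
To bound the variance I apply \eqref{eq:BLcor2} to $X_2$ under $\mu_{uX_2}$. The first-order term vanishes by the $\varphi \mapsto -\varphi$ symmetry, while $\partial_{x'}\partial_x X_2 = \tilde V(x)\delta_{x,x'}$ is deterministic, so the remaining term collapses to $c\sum_{x,y} g(x,y)^2\,\tilde V(x)\tilde V(y)$, which is $c\int\!\!\int V(z)V(z')\, g_N(z,z')^2\,dz\,dz'$ up to an inconsequential factor coming from \eqref{eq:g_Ndef}. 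When $d = 3$, Lemma~\ref{L:GFF-conv}~\eqref{eq:GFF-expl3} ensures this quantity is bounded uniformly in $N$ by $c(L)\lambda^2$. Combined with the linear estimate, this yields \eqref{eq:uniformint1}.

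The extension to the smoothed field $\varphi_N^\varepsilon$ is entirely analogous: the quadratic form $X_2^\varepsilon$ now has a kernel obtained from a double $\rho^\varepsilon$-convolution of $V$, which still satisfies the analog of \eqref{eq:Q-BL-3} uniformly in $N \geq c(\varepsilon)$ for $\lambda$ small enough (depending on $\rho$); the corresponding variance bound involves $\int\!\!\int (V * \rho^\varepsilon * \rho^\varepsilon)(z)(V * \rho^\varepsilon * \rho^\varepsilon)(z')\, g_N(z,z')^2\,dz\,dz'$, again controlled by Lemma~\ref{L:GFF-conv}. The constraint $N \geq c(\varepsilon)$ enters only to ensure that the support of $\rho^\varepsilon$ is well resolved at the lattice scale $1/N$ so these estimates pass through without loss. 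The main obstacle is the variance bound in the quadratic piece: it relies crucially on $g^2$ being locally integrable, which holds only for $d = 3$. This is precisely why the quadratic tilt restricts the dimension, whereas the purely linear case ($\lambda = 0$) goes through in every $d \geq 3$.
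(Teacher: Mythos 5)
Your treatment of the unsmoothed case follows the paper's route (double interpolation in the tilt, \eqref{eq:BLcor2} with vanishing first-order term, reduction to $\iint V g_N^2 V$), but the smoothed case \eqref{eq:uniformint1'} contains a genuine gap. For $\varepsilon>0$ the variance produced by \eqref{eq:BLcor2} is $\iint V(z)\,g_N^{\varepsilon}(z,z')^2\,V(z')\,dz\,dz'$ with $g_N^{\varepsilon}=\rho_N^{\varepsilon}\ast g_N\ast\rho_N^{\varepsilon}$ as in \eqref{eq:uniformint32}; your rewriting as $\iint (V\ast\rho^\varepsilon\ast\rho^\varepsilon)\,g_N^2\,(V\ast\rho^\varepsilon\ast\rho^\varepsilon)$ is false, because the mollifiers can be moved onto the test function only across the kernel to the \emph{first} power, not across its square ($\langle V,(\rho\ast G_N\ast\rho)V\rangle=\langle \rho\ast V, G_N(\rho\ast V)\rangle$, but no such identity holds for $(\rho\ast g_N\ast\rho)^2$). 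Consequently Lemma~\ref{L:GFF-conv} does not control the quantity you actually need, and some genuine estimate on the smoothed kernel squared is required; this is exactly what the paper supplies via Lemma~\ref{L:g_eps1} (giving $g_N^{\varepsilon}(z,z')\leq c\Vert\rho\Vert_\infty^2(\varepsilon\vee|z-z'|)^{2-d}$ for $N\geq\varepsilon^{-1}$) and Lemma~\ref{L:tight10}, which is where the constraint $N\geq c(\varepsilon)$ and the $\rho$-dependence of the constants really come from. Without an input of this kind your proof of \eqref{eq:uniformint1'} does not close.

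Two further points. First, your handling of the linear term by factorizing and invoking an exponential Brascamp--Lieb bound is legitimate in principle (the log-concavity established in the proof of Lemma~\ref{L:corBL} supports it), but the parenthetical alternative of summing even moments via Lemma~\ref{L:corBL2} does not work for arbitrary $\tau$: the constants $c(2k)$ from the recursion grow like $c^k(k!)^2$, so the series $\sum_k c(2k)\sigma^{2k}/(2k)!$ converges only when $c\sigma^2$ is small, whereas the proposition allows any $\tau>0$. The paper avoids this by a second interpolation (in the linear tilt, on top of the quadratically tilted measure, using Remark~\ref{R:BL},(1) and \eqref{eq:BLcor1}), which you could adopt verbatim. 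Second, invoking the convergence statements of Lemma~\ref{L:GFF-conv} yields only $\sup_N<\infty$ for each \emph{fixed} $V,W$, not the stated quantitative form $c(L)\lambda^2+c'(M)\tau^2$; for that one needs the uniform-in-$N$ kernel bounds of Lemma~\ref{L:tight10} (whose $\varepsilon=0$ case also covers all $d\geq3$ for the linear part), rather than a limit theorem. These two issues are repairable with the paper's tools, but the squared-kernel step for $\varepsilon>0$ is the one that must be redone.
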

\begin{rmk}
\begin{tight_enumerate}\label{R:unif-int}
\item In particular, for any $V,W$ as above, the random variables $ \frac12 \langle :\Phi_{N}^2:,V \rangle + \langle \Phi_{N},W \rangle$,  $N \geq 1$, cf.~\eqref{eq:NS1}-\eqref{eq:scalingtight1-nou} for notation, are (exponentially) tight by \eqref{eq:uniformint1}, and similarly for $\Phi_{N}^\varepsilon$ instead of $\Phi_{N}$ using \eqref{eq:uniformint1'}. 
Indeed, to deduce tightness observe for instance that by \eqref{eq:uniformint1},
$
E_{\mu}[\cosh\{ \langle :\Phi_{N}^2:,V \rangle  + \langle \Phi_{N},W \rangle\}]
$
is bounded uniformly in $N$, from which the claim follows using the inequality $e^{|x|} \leq \cosh(x)$, valid for all $x \in \R$.
\item The estimate \eqref{eq:uniformint1'} depends very mildly on the particular choice of mollifier $\rho$ in \eqref{eq:scalingtight2}. For instance, inspection of the proof below reveals that the constants can be chosen in a manner depending on $\Vert \rho \Vert_{\infty}$ only; see \eqref{eq:tight10} below. 
\end{tight_enumerate}
\end{rmk}
\begin{proof}
We first assume that $W\equiv 0$ in \eqref{eq:THETA} and will deal with the presence of a linear term separately at the end of the proof.
Let $\varphi_N^0=\varphi_N$, cf.~\eqref{eqscalingtightnew1} and \eqref{eq:scalingtight2}, which will allow us to treat \eqref{eq:uniformint1} and \eqref{eq:uniformint1'} simultaneously, the former corresponding to the case $\varepsilon=0$ in what follows. The proof will make use of Lemma~\ref{L:corBL}; we first explain how its hypotheses \eqref{eq:Q-BL-1}-\eqref{eq:Q-BL-3} fit the present setup. Consider the functional
\begin{equation}\label{eq:F_N-epsilon}
F_N^{\varepsilon}(\varphi) \stackrel{\text{def.}}{=} \frac12 \int  V(z) \varphi_N^{\varepsilon}(z)^2dz , \quad \varepsilon \in [0,1],
\end{equation}
which, up to renormalization, corresponds to the exponential tilt defining $\Theta_{\mu}(\varphi_N^{\varepsilon})$ in \eqref{eq:THETA} (when $W=0$). For $\varepsilon=0$, recalling \eqref{eqscalingtightnew1}, one writes for all $N \geq 1$,
\begin{equation} \label{eq:V_N-rewrite}
F_N^{0}(\varphi)  
 =\frac12 \sum_x V_N(x) \varphi_x^2,  
\end{equation}
with $V_N$ as in \eqref{eq:V_N}, which is of the form \eqref{eq:Q-BL-1} with $Q_{\lambda}  = \text{diag}(V_N)$.
By assumption on $V$, cf.~\eqref{e:cond-V}, $\text{supp}(V) \subset B_L$ hence $\text{diam}(V_N) \leq NL$. Moreover, $$\Vert V_N \Vert_{\infty} \stackrel{\eqref{eq:V_N}}{\leq} N^{-2} \Vert V \Vert_{\infty} \stackrel{\eqref{e:cond-V}}{\leq} \lambda (NL)^{-2},$$ that is, $Q_{\lambda} = \text{diag}(V_N)$ satisfies 
\eqref{eq:Q-BL-2} and \eqref{eq:Q-BL-3} with $R=NL$, whenever $\lambda < \Cr{c:Q}$, which we tacitly assume henceforth. The case $\varepsilon > 0$ follows a similar pattern. Here one obtains using \eqref{eq:scalingtight2} that \eqref{eq:F_N-epsilon} has the form \eqref{eq:Q-BL-1} and \eqref{eq:Q-BL-2} is readily seen to hold with $R=N(L+2)$.
To deduce that \eqref{eq:Q-BL-3} is satisfied, one applies Cauchy-Schwarz and uses that $\rho^{\varepsilon}(\cdot) \leq \varepsilon^{-d} \Vert \rho \Vert_{\infty}$ and $\int  \rho^{\varepsilon}(\cdot-w) dw=1$, whence $ \int  \rho^{\varepsilon}(z-w)^2 dw \leq \varepsilon^{-d} \Vert \rho \Vert_{\infty}$, to obtain
\begin{multline*}
2F_N^{\varepsilon}(\varphi) \stackrel{\eqref{eq:scalingtight2}}{=} N^{d-2}\int dz V(z)\Big( \int  \rho^{\varepsilon}(z-w) \varphi_{\lfloor Nw \rfloor}dw\Big)^2 \leq  N^{d-2}\int dz |V(z)| \varepsilon^{-d}\Vert \rho \Vert_{\infty}  \int \varphi_{\lfloor Nw \rfloor}^2 1_{|z-w|< \varepsilon} dw\\
= N^{-2} \Vert \rho \Vert_{\infty} \sum_{x} \varphi_x^2 \int dw  N^d \cdot 1_{\lfloor Nw \rfloor=x}
\Big( \varepsilon^{-d} \int_{B(w,\varepsilon)} |V(z)| dz \Big) \stackrel{\eqref{e:cond-V}}{\leq} \lambda (NL)^{-2} \Vert \varphi \Vert_{\ell^2(B_R)}^2,\end{multline*}
yielding \eqref{eq:Q-BL-3}. All in all, it follows that $e^{F_N^\varepsilon} \in L^1(\mu)$ for all $\varepsilon \in [0,1]$ on account of Lemma~\ref{L:corBL}, which is in force. In particular, together with Jensen's inequality, this implies that $\Theta_{\mu}(\varphi_N^{\varepsilon})$, $\varepsilon \in [0,1]$, as appearing on the left of \eqref{eq:uniformint1}-\eqref{eq:uniformint1'}, is well-defined and finite for all $N \geq 1$.  


 For $t \in [0,1]$, define $ \Theta_{\mu}(\chi\,; \, t)$ as in \eqref{eq:THETA}, but with $(tV,0)$ instead of $(V,W)$, whence $\Theta_{\mu}(\chi \,; \, 0)=0$ and $\Theta_{\mu}(\chi \,; \, 1)= \Theta_{\mu}(\chi)$. Observing that
$$
\frac{d}{dt}\Theta_{\mu}(\varphi_N^{\varepsilon}\,; \, t) \Big\vert_{t=0} =   \frac12 \int V(z) E_{\mu}[\, :\varphi_N^{\varepsilon}(z)^2: \, ] dz =0,
$$
one finds, with a similar calculation as that leading to \eqref{eq4isom7},
\begin{equation}
\label{eq:uniformint3}
\Theta_{\mu}(\varphi_N^{\varepsilon}) =  \int_0^1 \int_0^s  \text{var}_{\mu_t^{\varepsilon}}( : F_N^{\varepsilon} (\varphi): ) \,ds \, dt = \int_0^1 \int_0^s  \text{var}_{\mu_t^{\varepsilon}}( F_N^{\varepsilon}(\varphi) ) \,ds \, dt,
\end{equation}
where $F_N^{\varepsilon}$ is given by \eqref{eq:F_N-epsilon} and 
\begin{equation}\label{e:mu-t}d\mu_t^{\varepsilon} = \Theta_{\mu}(\varphi_N^{\varepsilon} \, ; \, t) ^{-1}e^{ :t F_N^{\varepsilon} (\varphi):} d\mu = E_{\mu}[e^{  {t}F_N^{\varepsilon} (\varphi)}]^{-1} e^{  {t}F_N^{\varepsilon} (\varphi)} d\mu. \end{equation}
We now derive a uniform estimate (in $N$ and $t$) for the variance appearing on the right-hand side of \eqref{eq:uniformint3}. We will use \eqref{eq:BLcor2} for this purpose. 
For $z,z' \in \R^3$ and $\varepsilon \geq 0$, let
\begin{equation}
\label{eq:rho_eps}
\rho_N^{\varepsilon}(z,z')= N^{3} \int_{ \frac{ \lfloor Nz' \rfloor}{N} + [0,\frac1N)^3} \rho^{\varepsilon}(z-w) \, dw
\end{equation}
and define $\rho_N^{0}(z,z')= N^3\cdot 1_{\lfloor Nz \rfloor = \lfloor Nz' \rfloor}$. 
Abbreviating $\partial_x =\frac{ \partial}{ \partial \varphi_x}$, one sees that for all $x\in \Z^d$ 
\begin{equation}
\label{eq:deriv_phieps}
\partial_x \varphi_{N}^{\varepsilon}(z)= N^{1/2} \cdot  N^{-3} \rho_N^{\varepsilon}(z,x/N), \quad z \in \R^d
\end{equation}
(in~particular, the right-hand side of \eqref{eq:deriv_phieps} equals $N^{1/2} \cdot1_{\{\lfloor Nz \rfloor = x\}}$ when $\varepsilon =0$).
Using \eqref{eq:deriv_phieps}, one further obtains that
\begin{equation}
\label{eq:deriv2} 
\begin{split}
\partial_x \partial_y F_N^{\varepsilon}(\varphi) 
&= \partial_x \int dz V(z)\partial_y \varphi_{N}^{\varepsilon}(z)^2 =  2\int dz V(z) \partial_x \varphi_{N}^{\varepsilon}(z) \partial_y \varphi_{N}^{\varepsilon}(z) \\
&= 2N \int dz V(z) N^{-3} \rho_N^{\varepsilon}(z,x/N) \cdot N^{-3} \rho_N^{\varepsilon}(z,y/N).
\end{split}
\end{equation}
Now, one readily infers using \eqref{eq:deriv_phieps} and the fact that $\varphi$ is a centered field under $\mu_t^{\varepsilon}$ that $\E_{\mu_t^{\varepsilon}} [ \partial_x F_N^{\varepsilon}(\varphi) ]=~0$. 
Recalling the rescaled Green's function $g_N= g_N^0$ from \eqref{eq:g_Ndef}, applying \eqref{eq:BLcor2} with the choice $\mu=\mu_t^{\varepsilon}$ and $F= F_{N}^{\varepsilon}$, observing that the first term on the right-hand side vanishes and substituting for $ \partial_x \partial_y F_N^{\varepsilon}$
 , one deduces that
\begin{equation}
\label{eq:uniformint31}
\begin{split}
 \text{var}_{\mu_t^{\varepsilon}}(F_N^\varepsilon ) 
&\leq \Cl[c]{c:var-bound}  N^{6} \iint dv dw \, N^{-1}g_N(v,w) \\
&\quad \times N^{6} \iint d v' dw' \, N^{-1}g_N(v',w')  \partial_{\lfloor Nv' \rfloor} \partial_{\lfloor Nv \rfloor}F\, \partial_{\lfloor Nw' \rfloor}  \partial_{\lfloor Nw \rfloor}F\\
&= 4 \Cr{c:var-bound} \iint  V(z)  g_N^{\varepsilon}(z,z')^2 V(z')  dz dz'
\end{split}
\end{equation}
where, for all $\varepsilon \geq 0$, we have introduced
\begin{equation}
\label{eq:uniformint32}
g_N^{\varepsilon}(z,z') = \iint \rho_N^{\varepsilon}(z,v) g_N(v,w) \rho_N^{\varepsilon}(z',w) \, dv dw,\quad z,z' \in \R^d
\end{equation}
 and we also used the fact that $\rho_N^{\varepsilon}(z,z')=\rho_N^{\varepsilon}(z,z'') $ whenever $\lfloor N z' \rfloor =\lfloor N z'' \rfloor  $, as apparent from \eqref{eq:rho_eps}.
Note that \eqref{eq:uniformint31} is perfectly valid for $\varepsilon =0$, in which case $g_N^{0}= g_N$ as in \eqref{eq:g_Ndef} in view of \eqref{eq:uniformint32} and the definition of $\rho_N^0$ below \eqref{eq:rho_eps}. To complete the proof, it is thus enough to supply a suitable bound for the quantity in the last line of \eqref{eq:uniformint31}. To this effect, let $(G_N^{\varepsilon})^k$, $k=1,2$, (with $(G_N^{\varepsilon})^1\equiv G_N^{\varepsilon}$) denote the operator with kernel $g_N^{\varepsilon}(\cdot,\cdot)^k$, i.e.~$(G_N^{\varepsilon})^k f (z)=  \int g_N^{\varepsilon}(z,z')^k f(z')dz'$, for any function $f$ such that $\int g_N^{\varepsilon}(z,z')^k |f(z')|dz' < \infty$ for all $z \in \R^d$. The following result is key.

\begin{lem}  \label{L:tight10} For all $V\in C^{\infty}_0 (\R^d)$ with $\textnormal{supp}(V) \subset B_L$ and  $\varepsilon \in (0,1)$,
\begin{align}
\label{eq:tight10.0}
\sup_{N \geq \Cr{c:tightness}(\varepsilon)} \, \big\Vert  G_N^{\varepsilon} V \big\Vert_{\infty} &\leq c(L, \Vert \rho \Vert_{\infty} ) \Vert V \Vert_{\infty} \quad (d \geq 3), \\
\label{eq:tight10}
\sup_{N \geq \Cr{c:tightness}(\varepsilon)} \, \big\Vert  (G_N^{\varepsilon})^2 V \big\Vert_{\infty} &\leq c(L, \Vert \rho \Vert_{\infty} ) \Vert V \Vert_{\infty} \quad (d = 3), 
\end{align}
and \eqref{eq:tight10.0}-\eqref{eq:tight10} hold for $\varepsilon =0$ uniformly in $N \geq 1$ with a constant $c$ independent of $\rho$. 
\end{lem}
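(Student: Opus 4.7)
The strategy is to reduce both bounds to integrating a sharp pointwise estimate on $g_N^\varepsilon$ against $V$. The starting ingredient is the classical off-diagonal bound $g(x,y) \leq c(1+|x-y|)^{-(d-2)}$ for the simple random walk Green function on $\Z^d$, which under the rescaling \eqref{eq:g_Ndef} becomes
$$
g_N(z,z') \leq c(N^{-1} + |z-z'|)^{-(d-2)}, \quad z,z' \in \R^d,
$$
uniformly in $N \geq 1$.

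The case $\varepsilon = 0$ will follow at once, since $g_N^0 = g_N$ by the definition of $\rho_N^0$ given below \eqref{eq:rho_eps}, and since $\textnormal{supp}(V) \subset B_L$, so
$$
\sup_{z \in \R^d} G_N V(z) \leq c \Vert V \Vert_\infty \sup_{z \in \R^d} \int_{B_L} (N^{-1} + |z-z'|)^{-(d-2)} dz' \leq c(L) \Vert V \Vert_\infty
$$
uniformly in $N \geq 1$, because $|y|^{-(d-2)}$ is locally integrable in $\R^d$ for $d \geq 3$. The analogous computation with $g_N^2$ in place of $g_N$ will yield \eqref{eq:tight10} at $\varepsilon = 0$, using that $|y|^{-2}$ is locally integrable in $\R^3$.

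For $\varepsilon > 0$, the plan is to establish the uniform pointwise bound
$$
g_N^\varepsilon(z,z') \leq c(\Vert \rho \Vert_\infty)(|z-z'| + \varepsilon)^{-(d-2)}, \quad N \geq \Cr{c:tightness}(\varepsilon),
$$
and then integrate against $V$ exactly as above. Choosing $\Cr{c:tightness}(\varepsilon) \geq \varepsilon^{-1}$ and inspecting \eqref{eq:rho_eps}, one sees that the function $v \mapsto \rho_N^\varepsilon(z,v)$ is a probability density supported in a ball of radius $\leq c\varepsilon$ about $z$ and bounded pointwise by $c \Vert \rho \Vert_\infty \varepsilon^{-d}$. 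Plugging the pointwise estimate on $g_N$ into \eqref{eq:uniformint32} and splitting the resulting double integral by whether $|z-z'| \leq 4\varepsilon$ (near regime, where one bounds $\rho_N^\varepsilon(z',\cdot) \leq c \Vert \rho \Vert_\infty \varepsilon^{-d}$ and uses $\int_{B(v,c\varepsilon)} |v-w|^{-(d-2)} dw \leq c \varepsilon^2$) or $|z-z'| > 4\varepsilon$ (far regime, where $|v-w| \geq \tfrac12 |z-z'|$ on the support of the two mollifiers) will give the displayed bound. Integrating against $V$ then yields \eqref{eq:tight10.0}; squaring and integrating in $d = 3$ yields \eqref{eq:tight10}; in both cases the constants are $\varepsilon$-independent thanks to the local integrability invoked already in the $\varepsilon = 0$ case.

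The main obstacle will be ensuring precisely this $\varepsilon$-uniformity in the $\varepsilon > 0$ regime. Although the near-field bound $g_N^\varepsilon \leq c\varepsilon^{-(d-2)}$ blows up as $\varepsilon \to 0$, after integrating against a bounded $V$ the contribution from $|z-z'| \leq 4\varepsilon$ is controlled by $c \varepsilon^{-(d-2)} \cdot \varepsilon^d = c\varepsilon^2 \to 0$; consequently the integrated bound is driven by the far regime, which reduces to the same local integrability estimate as for $\varepsilon = 0$. No random walk input beyond the baseline Green function estimate is required.
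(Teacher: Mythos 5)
Your argument is correct and follows essentially the same route as the paper: the pointwise estimate $g_N^{\varepsilon}(z,z')\leq c(\Vert\rho\Vert_\infty)(\varepsilon\vee|z-z'|)^{2-d}$ you propose to derive by splitting near/far regimes in \eqref{eq:uniformint32} is exactly the paper's Lemma~\ref{L:g_eps1} (which the paper invokes rather than reproves), and the subsequent integration against $V$ over $B_L$, using local integrability of $|y|^{2-d}$ (and of $|y|^{-2}$ when $d=3$ for the squared kernel), is the content of the paper's bounds \eqref{eq:g_eps5}--\eqref{eq:g_eps6}. The only cosmetic difference is that you use the unified bound $(N^{-1}+|z-z'|)^{2-d}$ for the $\varepsilon=0$ case where the paper splits explicitly at distance $1/N$.
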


We postpone the proof of Lemma~\ref{L:tight10} for a few lines. Applying \eqref{eq:tight10} to \eqref{eq:uniformint31} and recalling the assumptions on $V$ specified in~\eqref{e:cond-V}, which are in force, it readily follows that $ \text{var}_{\mu_t^{0}}(F_N^0 ) \leq c(L)\lambda^2$ for all $N \geq 1$, $t \in [0,1]$ and $ \text{var}_{\mu_t^{\varepsilon}}(F_N^\varepsilon) \leq c(L, \rho )\lambda^2$ for all $N \geq  \Cr{c:tightness}(\varepsilon)$, $t \in[0,1]$ and $\varepsilon \in (0,1]$. Plugging these into \eqref{eq:uniformint3}, the asserted bounds \eqref{eq:uniformint1} and \eqref{eq:uniformint1'} follow for $W=0$.

The case $W \neq 0$ is dealt with by considering $\tilde{\mu}^{\varepsilon}\stackrel{\text{def.}}{=} \mu_{t=1}^{\varepsilon}$, the latter as in \eqref{e:mu-t}, and introducing $$d\tilde{\mu}^{\varepsilon}_t= \frac1{E_{\tilde{\mu}^{\varepsilon}}[e^{  t\tilde{F}_N^{\varepsilon} (\varphi)}]} e^{  t\tilde{F}_N^{\varepsilon} (\varphi)} d \tilde{\mu}^{\varepsilon}, \quad \tilde{F}_N^{\varepsilon} (\varphi)=  \int W(z) \varphi_N^{\varepsilon}(z) dz$$
for $t \in [0,1]$ and $\varepsilon \in [0,1]$. Then, one defines $ \tilde{\Theta}_{\mu}(\chi\,; \, t)$ as in \eqref{eq:THETA}, but with $(V,tW)$ instead of $(V,W)$ and repeats the calculation starting above \eqref{eq:uniformint3} with $ \tilde{\Theta}_{\mu}(\chi\,; \, t)$ in place of ${\Theta}_{\mu}(\chi\,; \, t)$. The resulting variance of $\tilde{F}_N^{\varepsilon}$ can be bounded using \eqref{eq:BLcor1} (or \eqref{eq:BLcor2} which boils down to the former since $\partial_x \partial_y \tilde{F}_N^{\varepsilon}=0$) and \eqref{eq:tight10.0}. The bounds \eqref{eq:uniformint1}-\eqref{eq:uniformint1'} then follow as $\tilde{\Theta}_{\mu}(\cdot\,; \, t=1)= {\Theta}_{\mu}(\cdot)$.
\end{proof}

We now supply the missing proof of Lemma~\ref{L:tight10}, which, albeit simple, plays a pivotal role (indeed, \eqref{eq:tight10} is the sole place where the fact that $d=3$ is being used). Before doing so, we collect an important basic property of the (smeared) kernel $g_N^{\varepsilon}(\cdot,\cdot)$ introduced in \eqref{eq:uniformint32} that will be useful in various places. Recall that $g_N^{\varepsilon}$ implicitly depends on the choice of cut-off function $\rho=\rho^1$ through $\rho_N^{\varepsilon}$, cf.~\eqref{eq:rho_eps}.

\begin{lem}[$d \geq 3$] \label{L:g_eps1} For all $\varepsilon \in (0,1)$ and $N \geq \varepsilon^{-1}$,
\begin{equation}
\label{eq:g_eps1}
g_N^{\varepsilon}(z,z') \leq c \Vert \rho \Vert_{\infty}^2 (\varepsilon \vee |z-z'|)^{2-d}, \quad z,z' \in \R^d.
\end{equation}
\end{lem}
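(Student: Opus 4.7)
\textbf{Proof plan for Lemma~\ref{L:g_eps1}.}
The plan is to exploit three elementary properties of the smoothed kernel $\rho_N^{\varepsilon}(z,\cdot)$ introduced in \eqref{eq:rho_eps}, together with the classical upper bound $g(x,y)\le c(|x-y|\vee 1)^{2-d}$ on the Green's function of simple random walk in $d\ge 3$. Specifically, by unfolding the definition of $\rho_N^{\varepsilon}$ and using $\Vert\rho^\varepsilon\Vert_\infty = \varepsilon^{-d}\Vert\rho\Vert_\infty$, $\mathrm{supp}(\rho^\varepsilon)\subset[-\varepsilon,\varepsilon]^d$, and $\int\rho^\varepsilon=1$, together with the standing assumption $N\ge\varepsilon^{-1}$ (so the averaging cells of sidelength $N^{-1}$ are finer than $\varepsilon$), I obtain:
\begin{align*}
&\text{(i)}\quad \rho_N^{\varepsilon}(z,v)\le \Vert\rho\Vert_\infty \varepsilon^{-d},\\
&\text{(ii)}\quad \rho_N^{\varepsilon}(z,v)=0 \text{ unless } |z-v|\le C\varepsilon,\\
&\text{(iii)}\quad \int \rho_N^{\varepsilon}(z,v)\,dv=1,
\end{align*}
where (iii) follows by partitioning $\R^d$ into the cubes $\{v:\lfloor Nv\rfloor=x\}$ (volume $N^{-d}$) and noting that $\rho_N^\varepsilon(z,\cdot)$ is constant on each such cube. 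The bound on $g(\cdot,\cdot)$ translates through \eqref{eq:g_Ndef} to $g_N(v,w)\le c(|v-w|\vee N^{-1})^{2-d}$.

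Next I would split into two regimes according to whether $|z-z'|\le 2C\varepsilon$ or $|z-z'|>2C\varepsilon$, with $C$ the constant from (ii).

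In the near regime $|z-z'|\le 2C\varepsilon$, I bound both factors of $\rho_N^{\varepsilon}$ by property (i), restrict the integration to $v\in B(z,C\varepsilon)$, $w\in B(z',C\varepsilon)$ using (ii), and compute
\[
g_N^{\varepsilon}(z,z')\le \Vert\rho\Vert_\infty^2\varepsilon^{-2d}\int_{B(z,C\varepsilon)}\int_{B(z',C\varepsilon)}(|v-w|\vee N^{-1})^{2-d}\,dv\,dw.
\]
Since $N^{-1}\le\varepsilon$, the inner double integral is at most $c\varepsilon^d\int_{|u|\le c'\varepsilon}|u|^{2-d}du = c''\varepsilon^{d+2}$ (the radial integral $\int_0^{c'\varepsilon} r\,dr = c\varepsilon^2$ being finite precisely because $d\ge 3$), which yields $g_N^{\varepsilon}(z,z')\le c\Vert\rho\Vert_\infty^2\varepsilon^{2-d}$, as required.

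In the far regime $|z-z'|>2C\varepsilon$, property (ii) forces $|v-w|\ge |z-z'|/2$ on the support of the integrand, hence $g_N(v,w)\le c|z-z'|^{2-d}$. Combining this with normalization (iii) gives $g_N^{\varepsilon}(z,z')\le c|z-z'|^{2-d}$, and since $\Vert\rho\Vert_\infty\ge c_d>0$ (as $\int\rho=1$ with $\mathrm{supp}\,\rho\subset[-1,1]^d$), this can be absorbed into $c\Vert\rho\Vert_\infty^2|z-z'|^{2-d}$. Putting both cases together yields \eqref{eq:g_eps1}. No serious obstacle arises; the only point meriting care is ensuring that the discretization at scale $N^{-1}$ interacts compatibly with the smoothing at scale $\varepsilon$, which is handled cleanly by the hypothesis $N\ge\varepsilon^{-1}$.
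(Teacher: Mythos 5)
Your proof is correct and follows essentially the same route as the paper's: a near/far split at scale $\varepsilon$, the pointwise bound $\rho_N^{\varepsilon}\le \varepsilon^{-d}\Vert\rho\Vert_\infty$, the support property at distance $c\varepsilon$, the (exact or approximate) normalization $\int\rho_N^{\varepsilon}(z,v)\,dv\le c(\Vert\rho\Vert_\infty\vee 1)$, and the standard estimate $g_N(v,w)\le c\,(|v-w|\vee N^{-1})^{2-d}$. The only adjustment needed is the far-regime threshold: with the cutoff $|z-z'|>2C\varepsilon$ you only get $|v-w|\ge |z-z'|-2C\varepsilon$, which need not be $\ge |z-z'|/2$; take the far regime to be $|z-z'|>4C\varepsilon$ (the paper uses $10\varepsilon$), and your near-regime computation, which only uses $|z-z'|\le c\varepsilon$, covers the complementary range unchanged.
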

The proof of Lemma~\ref{L:g_eps1} is found in App.~\ref{A:Green}. With Lemma~\ref{L:g_eps1} at hand, we give the
\begin{proof}[Proof of Lemma~\ref{L:tight10}] 
We show \eqref{eq:tight10} first. By assumption on $V$, it is sufficient to argue that 
\begin{equation}
\label{eq:g_eps5}
\sup_{z} \int_{B(0,L)} g_N^{\varepsilon}(z,z')^2 dz' \leq c  \Vert \rho \Vert_{\infty}^{\Cl[c]{c:Wpower}} L , \quad  L \geq 1,
\end{equation}
uniformly in $N \geq c(\varepsilon)$ (and for all $N \geq 1$ with $\Cr{c:Wpower}=0$ when $\varepsilon =0$), from which \eqref{eq:tight10} immediately follows.
We first consider the case $\varepsilon=0$, which is simpler. The fact that $d=3$ now crucially enters. Recalling $g_N=g_N^0$ from \eqref{eq:g_Ndef}, splitting the integral in \eqref{eq:g_eps5} according to whether $|z'| \leq \frac1N$ or not and arguing similarly as in the proof of Lemma~\ref{L:g_eps1} (see below \eqref{eq:g_eps4}), one sees that for all $z \in \R^d$ and $N \geq 1$,
$$
\int_{B(0,L)} g_N^{0}(z,z')^2 dz' \leq cN^2 \, \text{vol}(B(0, N^{-1}))+ c' \int_{\frac1N \leq |z'| \leq  L} \frac {dz'}{|z'-z|^{2}} \leq c'' L,
$$
 where $\text{vol}(\cdot)$ refers to the Lebesgue measure and the last bound follows as 
\begin{equation}
\label{eq:g_eps6}
 \int_{|z'| \leq  L} \frac {dz'}{|z'-z|^{2}} \leq c \int_{0\vee (|z|-L)}^{|z|+L} dr \leq 2cL, \text{ for all }z \in \R^3. 
\end{equation}
This yields \eqref{eq:g_eps5} for all $N \geq 1$ when $\varepsilon =0$. For $\varepsilon>0$ and all $N \geq \varepsilon^{-1}$ one finds using \eqref{eq:g_eps1} that
$$
\int_{z' \in B(0,L), |z-z'|\leq \varepsilon} g_N^{\varepsilon}(z,z')^2 dz' \leq c \Vert \rho \Vert_{\infty}^4 \varepsilon^{-2} \, \text{vol}( B(0,\varepsilon)) \leq c \Vert \rho \Vert_{\infty}^4 \varepsilon
$$
and 
$$
\int_{z' \in B(0,L), |z-z'|> \varepsilon} g_N^{\varepsilon}(z,z')^2 dz' \leq c \Vert \rho \Vert_{\infty}^4 \int_{|z'| \leq  L} \frac {dz'}{|z'-z|^{2}} \leq c' \Vert \rho \Vert_{\infty}^4L,
$$
using \eqref{eq:g_eps6} in the last step. Together, these bounds immediately yield \eqref{eq:g_eps5}. The proof of~ \eqref{eq:tight10.0} follows by adapting the previous argument, yielding that $\int_{B(0,L)} g_N^{\varepsilon}(z,z') dz' \leq c \Vert \rho \Vert_{\infty}^2 L^2$ uniformly in $z \in \R^d$, $L \geq 1$ and $N \geq c(\varepsilon)$, along with a similar bound when $\varepsilon=0$.  
\end{proof}
\begin{remark}\label{R:alter}

The case $\varepsilon >0$ in \eqref{eq:F_N-epsilon} could also be handled via a suitable random walk representation (with potential) when $V \geq 0$. The latter is not a serious issue with regards to producing estimates like \eqref{eq:uniformint1}-\eqref{eq:uniformint1'} since $\Theta_\mu$ can be bounded a-priori by replacing $V$ by $V_+$ in \eqref{eq:THETA}. Now, letting
\begin{equation*}
Q_{N}^{\varepsilon}(x,y)= N^{d-2 } \int  V(z) \Big[ \int  \rho^{\varepsilon}(z-w) 1_{\lfloor Nw \rfloor=x} dw \, \int \rho^{\varepsilon}(z-w')  1_{\lfloor Nw' \rfloor=y} dw' \Big] dz
\end{equation*}
one can rewrite
\begin{align*}
F_N^{\varepsilon}(\varphi)= \sum_{x,y} Q_{N}^{\varepsilon}(x,y) \varphi_x \varphi_y= -\frac12 \sum_{x \neq y} Q_{N}^{\varepsilon}(x,y)(\varphi_x - \varphi_y)^2 + \sum_x V_N^{\varepsilon}(x)\varphi_x^2,
\end{align*}
where $V_N^{\varepsilon}(x)= \sum_yQ_{N}^{\varepsilon}(x,y) $.  Noting that $Q_{N}^{\varepsilon}(x,y) \geq 0$ when $V \geq 0$, this leads to an effective random walk representation with finite-range (deterministic) conductances $Q_{N}^{\varepsilon}(x,y)$ which add to $a(\varphi)$ in \eqref{eq2isom5}. In particular, the lower ellipticity only improves. The potential $V_N^{\varepsilon}$ is then seen to exhibit the correct scaling (e.g.~it satisfies \eqref{eq:Vcond}).
\end{remark}

We conclude this section by refining the above arguments in the Gaussian case. Indeed the proof of~\eqref{eq:uniformint1} (or \eqref{eq:uniformint1'}) can be strengthened in the quadratic case essentially because the variance appearing in \eqref{eq:uniformint3} can be computed exactly. This improvement will later be used to yield the formula \eqref{eq:scalinglimit2} in Theorem~\ref{T:limit_2}. 

Thus consider a Gaussian measure $ \mu^{\mathbf{G}}$ converging in law to $\psi$ in the sense of \eqref{eq:NS2}. 
For concreteness, we define $\mu^{\mathbf{G}}$~to be the canonical law of the centered Gaussian field $\varphi$ with covariance given by the Green's function of the time-changed process $Y_t=Z_{\sigma^2 t}$, $t \geq 0$, where $Z$ denotes the simple random walk, cf.~above \eqref{eq:q_t}, and $\Sigma= \sigma^2 \text{Id}$ with $\Sigma$ the effective diffusivity from \eqref{eq:IP}; see e.g.~\cite{zbMATH05988063}, Theorem 1.1 regarding the latter. Incidentally, $\sigma^2$ is proportional to $E_{\mu}[U''(\varphi_0 -\varphi_{e_i})]$ for any $1\leq i \leq d$, which is independent of $i$ by invariance of $\mu$ under lattice rotations. The following is the announced improvement over \eqref{eq:uniformint1} for $\mu^{\mathbf{G}}$.
\begin{proposition}[$d=3$] \label{P:uniformintGFF}  For all $V,W \in C^{\infty}_0 (\R^3)$ with $V$ satisfying \eqref{e:cond-V} for $\lambda< \Cl[c]{c:lambda_2}$, 
\begin{equation}
\label{eq:uniformint1GFF}
 \lim_N \,  \Theta_{\mu^{\mathbf{G}}}(\varphi_N)   = \frac12 \big(  A^V_\Sigma(V,V)+ E^V_{\Sigma}(W, W)  \big) 
\end{equation}(see below \eqref{e:G^V} and \eqref{e:A^V} for notation).
\end{proposition}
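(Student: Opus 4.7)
The plan is to leverage the Gaussian structure of $\mu^{\mathbf{G}}$ in order to make every step of the proof of Proposition~\ref{L:uniformint} exact, then pass to the continuum limit using Lemma~\ref{L:GFF-conv}. First, I would factor the generating functional on the left-hand side of \eqref{eq:uniformint1GFF} into a purely quadratic and a purely linear contribution: introducing the Gaussian tilted measure $\widetilde{\mu}_N$ with Radon--Nikodym derivative proportional to $\exp\{\frac12 \langle :\Phi_N^2:,V\rangle\}$ against $\mu^{\mathbf{G}}$, one has
\begin{equation*}
\Theta_{\mu^{\mathbf{G}}}(\varphi_N) \,=\, \log E_{\mu^{\mathbf{G}}}\bigl[e^{\frac12 \langle :\Phi_N^2:,V\rangle}\bigr] + \log E_{\widetilde{\mu}_N}\bigl[e^{\langle \Phi_N,W\rangle}\bigr].
\end{equation*}

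For the quadratic term, I would reuse the interpolation identity \eqref{eq:uniformint3} (with $\varepsilon=0$). In the Gaussian case the tilted measure $\mu^{\mathbf{G}}_t$ is a centered Gaussian whose inverse covariance is $(-L_Y) - tQ_N$, where $Q_N$ is the symmetric operator such that $F_N(\varphi) = \frac12\langle\varphi,Q_N\varphi\rangle$. Wick's theorem then gives
\begin{equation*}
\mathrm{var}_{\mu^{\mathbf{G}}_t}\bigl(F_N(\varphi)\bigr) \,=\, \tfrac12 \iint V(z)\, C_N^t(z,z')^2\, V(z')\, dz\, dz',
\end{equation*}
where $C_N^t(z,z') \stackrel{\mathrm{def.}}{=} \mathrm{cov}_{\mu^{\mathbf{G}}_t}(\varphi_N(z),\varphi_N(z'))$ is, up to the explicit time-change constant relating $Y$ to the SRW, the rescaled lattice Green's function with potential $tV_N$. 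Since $d=3$, Lemma~\ref{L:GFF-conv}~\eqref{eq:GFF-expl3} yields pointwise convergence of the above variance to $\frac12 \langle V, (G^{tV}_\Sigma)^2 V\rangle$; combined with the bound \eqref{eq:uniformint1} (which serves as dominating integrand uniformly in $t,s,N$), I would invoke dominated convergence to pass through the double integral in \eqref{eq:uniformint3} and conclude, via \eqref{e:A^V}, that the quadratic term tends to $\frac12 A^V_\Sigma(V,V)$.

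For the linear term, $\widetilde{\mu}_N$ is itself a centered Gaussian measure with covariance $C_N^1$, hence
\begin{equation*}
\log E_{\widetilde{\mu}_N}\bigl[e^{\langle \Phi_N,W\rangle}\bigr] \,=\, \tfrac12 \iint W(z)\, C_N^1(z,z')\, W(z')\, dz\, dz',
\end{equation*}
which converges to $\frac12 E^V_\Sigma(W,W)$ by \eqref{eq:GFF-expl2} of Lemma~\ref{L:GFF-conv} (valid in all $d\ge 3$, which explains why the linear contribution $\frac12 E^V_\Sigma(W,W)$ requires no dimensional restriction). Summing the two contributions yields \eqref{eq:uniformint1GFF}.

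The main obstacle I expect is verifying cleanly that the tilted discrete covariance $C_N^t$ really is the rescaled Green's function $g^{tV_N}$ with the correct proportionality constant (so that Lemma~\ref{L:GFF-conv} applies verbatim), and carefully tracking the time-change factor $\sigma^2$ between the simple random walk $Z$ of Section~\ref{sec:prelim} and the process $Y$ whose Green's function is the covariance of $\mu^{\mathbf{G}}$. This is essentially a bookkeeping exercise; the only genuinely dimension-dependent ingredient is \eqref{eq:GFF-expl3}, which forces $d=3$ (and corresponds, on the continuum side, to the local square-integrability of $G_\Sigma(z,\cdot)$ needed to make sense of $(G^{tV}_\Sigma)^2$).
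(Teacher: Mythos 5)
Your proposal is correct and follows essentially the same route as the paper: split $\Theta_{\mu^{\mathbf{G}}}(\varphi_N)$ into the quadratic part (handled by the interpolation identity \eqref{eq:uniformint3} with the variance computed exactly via Wick's theorem, giving the squared tilted Green's function $g_N^{tV}$) and the linear part (an exact Gaussian computation under the tilted measure), then pass to the limit with Lemma~\ref{L:GFF-conv}. The only cosmetic point is that the dominating bound for exchanging limit and the $ds\,dt$ integral is most naturally the uniform-in-$(N,t)$ variance estimate coming from \eqref{eq:uniformint31} together with Lemma~\ref{L:tight10}, rather than \eqref{eq:uniformint1} itself, and the time-change constant $\sigma^2$ is simply normalized to $1$ in the paper.
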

\begin{proof}
Referring to $\mu_t^{\mathbf{G}}$ as the measure in \eqref{e:mu-t} with $\varepsilon =0$ and $\mu=\mu^{\mathbf{G}}$, it follows using \eqref{eq:uniformint3} that
\begin{equation}
\label{eq:GFF-expl1}
 \Theta_{\mu^{\mathbf{G}}}(\varphi_N)= \int_0^1 \int_0^s  \text{var}_{\mu_t^{\mathbf{G}}}( F_N^{0}(\varphi) ) \,ds \, dt + \log E_{\mu_1^{\mathbf{G}}}\big[e^{\int W(z) \varphi_N(z) dz}\big]
\end{equation}
with $F_N^0$ as defined in \eqref{eq:F_N-epsilon}. We now compute the terms on the right-hand side of \eqref{eq:GFF-expl1} separately. To avoid unnecessary clutter, we assume that $\sigma^2\equiv 1$. 
Using \eqref{eqscalingtightnew1} and Wick's theorem, one finds that $E_{\mu_t^{\mathbf{G}}}[\varphi_N^{\varepsilon}(z)^2 \varphi_N^{\varepsilon}(z')^2]= 2g_N^{tV}(z,z')^2 + g_N^{tV}(z,z)g_N^{tV}(z',z')$, where $g^{tV}_N$ refers to the rescaled Green's function \eqref{eq:g_Ndef}. Hence, 
$$
\text{var}_{\mu_t^{\mathbf{G}}}( F_N^{0}(\varphi) )= \frac12 \iint V(z) g_N^{tV}(z,z')^2 V(z') dz dz'= \langle V, (G_N^{tV})^2 V \rangle
$$
(see \eqref{eq:G_Ndef} for notation), where we used that $E_{\mu_t^{\mathbf{G}}}[F_N^{0}(\varphi)]= \int V(z) g_N^{tV}(z,z) dz$. Similarly,
$$
2 \log E_{\mu_1^{\mathbf{G}}}\big[e^{\int W(z) \varphi_N(z) dz}\big]=  \text{var}_{\mu_1^{\mathbf{G}}} \big(\textstyle \int W(z) \varphi_N(z) dz \big)=\langle W, G_N^{V} W \rangle.
$$
Substituting these expressions into \eqref{eq:GFF-expl1}, the claim \eqref{eq:uniformint1GFF} follows by means of Lemma~\ref{L:GFF-conv}.
\end{proof}

\subsection{$L^2$-comparison}
\label{sec:approx}

With tightness at hand, the task of proving Theorem~\ref{T:limit_2} requires identifying the limit. A key step is the following $L^2$-comparison estimate, which implies in particular that $:\varphi_{N}^2:$ and its regularized version $:(\varphi_{N}^{\varepsilon})^2:$ introduced in \eqref{eq:scalingtight2} are suitably close. More precisely, we have the following control. Recall that $:X^2: \,= X^2 - E_{\mu}[X^2]$ for $X \in L^2(\mu)$. 

\begin{proposition}[$L^2$-estimate, $\varepsilon \in (0,1)$]\label{P:tightness_phi2.2}  
For all $V \in C_{0}^{\infty}(\R^3)$ such that $\text{supp}(V) \subset B_L$, 
there exists $\Cl[c]{c:L2}= \Cr{c:L2} (\varepsilon, L) \in (1,\infty)$ such that 
\begin{equation}
\label{Ptight2.1}
\lim_{\varepsilon \searrow 0} \sup_{N \ge \Cr{c:L2}} \left\Vert 
 \int V(z) \big[  :\varphi_N(z)^2:    -    :\varphi_N^{\varepsilon}(z)^2:  \big] dz  \right\Vert_{L^2(\mu)} =0, \quad (d=3). 
\end{equation}
Moreover, for such $V$, 
\begin{equation}
\label{Ptight2.1-lin}
\lim_{\varepsilon \searrow 0} \sup_{N \ge \Cr{c:L2}} \left\Vert 
 \int V(z) \big[  \varphi_N(z)    -    \varphi_N^{\varepsilon}(z)  \big] dz  \right\Vert_{L^2(\mu)} =0, \quad (d\geq3). 
\end{equation}
\end{proposition}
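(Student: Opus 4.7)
Both $\int V(z)[:\varphi_N(z)^2:-:(\varphi_N^{\varepsilon})^2(z):]dz$ and $\int V(z)[\varphi_N(z)-\varphi_N^\varepsilon(z)]dz$ are centered versions of $F_N-F_N^\varepsilon$ with $F_N^\varepsilon := \int V(z)\varphi_N^\varepsilon(z)^k dz$ ($k=2,1$ respectively; $F_N:=F_N^0$), so their squared $L^2(\mu)$-norms equal $\text{var}_\mu(F_N-F_N^\varepsilon)$. Thus both claims reduce to showing $\text{var}_\mu(F_N-F_N^\varepsilon) \to 0$ as $\varepsilon \searrow 0$, uniformly for $N \geq \Cr{c:L2}(\varepsilon,L)$.

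To bound this variance, the plan is to apply Lemma~\ref{L:corBL}. For the quadratic case, $F_N-F_N^\varepsilon$ is quadratic in $\varphi$, so $\partial_x(F_N-F_N^\varepsilon)$ is linear in $\varphi$ with zero $\mu$-mean, killing the first term in \eqref{eq:BLcor2}; its second derivative is deterministic, computable via \eqref{eq:deriv_phieps}--\eqref{eq:deriv2} as a difference of two products of the smoothing kernels $\rho_N^\varepsilon$. Summing over lattice indices exactly as in the derivation of \eqref{eq:uniformint31}--\eqref{eq:uniformint32} will yield a bound of the form
\begin{equation*}
\text{var}_\mu(F_N-F_N^\varepsilon) \leq c \iint V(z)V(z')\big[g_N(z,z')^2 - 2 g_N^{0,\varepsilon}(z,z')\,g_N^{\varepsilon,0}(z,z') + g_N^\varepsilon(z,z')^2\big]\,dz\,dz',
\end{equation*}
where $g_N^{0,\varepsilon}$ and $g_N^{\varepsilon,0}$ are the partial-smoothing variants of \eqref{eq:uniformint32} (smoothed in only one variable). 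For the linear case, $F_N-F_N^\varepsilon$ is itself linear in $\varphi$ with deterministic $\partial_x$, so \eqref{eq:BLcor1} gives the analogous but simpler bound $c\iint VV[g_N - 2g_N^{0,\varepsilon} + g_N^\varepsilon]\,dz\,dz'$ without the squaring.

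The remaining step is to show these integral bounds vanish in the desired limit. Each of the kernels $g_N^{\alpha,\beta}$ ($\alpha,\beta\in\{0,\varepsilon\}$) converges, as $N\to\infty$, to the corresponding Brownian Green kernel $G_\Sigma$ or its $\varepsilon$-smoothed variant---the unsmoothed case being covered by Lemma~\ref{L:GFF-conv}, the mixed ones following by a direct adaptation of its proof. As $\varepsilon\searrow 0$ these continuum smoothings collapse to $G_\Sigma$, so the bracketed integrand tends pointwise (for $z\neq z'$) to $G_\Sigma^2-2G_\Sigma^2+G_\Sigma^2=0$ (resp.\ $G_\Sigma-2G_\Sigma+G_\Sigma=0$ for the linear case). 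To upgrade pointwise to integrated convergence and ensure uniformity for $N\geq \Cr{c:L2}(\varepsilon,L)$, we split the integration into $\{|z-z'|\leq 2\varepsilon\}$ and its complement: on the near-diagonal piece, Lemma~\ref{L:g_eps1} together with the standard bound $g_N(z,z')\leq c|z-z'|^{2-d}$ gives pointwise control by $c(\varepsilon\vee|z-z'|)^{2(2-d)}$ (resp.\ $c(\varepsilon\vee|z-z'|)^{2-d}$), whose integral against $V\otimes V$ over that region is $O(\varepsilon)$ in the critical case $d=3$ (resp.\ even smaller in $d\geq 3$); on the far piece, the smoothing becomes negligible and dominated convergence closes the argument, with $\Cr{c:L2}(\varepsilon,L)$ chosen large enough that the relevant $N$-limits have been essentially attained.

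\textbf{Main obstacle.} The crux is the quadratic case in $d=3$, where $g_N(z,z')^2\sim|z-z'|^{-2}$ is only borderline integrable on $\R^3$; absent the uniform cap $g_N^\varepsilon\leq c\varepsilon^{2-d}$ supplied by Lemma~\ref{L:g_eps1}, the near-diagonal contribution would blow up. This is precisely why \eqref{Ptight2.1} is restricted to $d=3$: for $d\geq 4$, $g_N^2$ would fail to be integrable at all, whereas the linear bound \eqref{Ptight2.1-lin} only involves a single power of $g_N$ and hence remains tractable for all $d\geq 3$.
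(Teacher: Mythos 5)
Your proposal follows essentially the same route as the paper's proof: reduce both claims to a variance, bound it via the Brascamp--Lieb estimate \eqref{eq:BLcor2} with deterministic second derivatives, express the resulting kernel through partially smoothed Green's kernels, and split the integral at scale of order $\varepsilon$, handling the near-diagonal piece by the absolute bounds of Lemma~\ref{L:g_eps1} (which is exactly where $d=3$ enters for the quadratic case) and the far piece by the off-diagonal convergence of all the kernels to a common continuum limit. Two small corrections, neither of which changes the argument: in the contraction both $\rho$-factors of each second derivative sit at the same endpoint, so the cross term is $-(g_N^{0,\varepsilon})^2-(g_N^{\varepsilon,0})^2$ (the paper's $-2\,\tilde g_N^{\varepsilon}(z,z')^2$ after symmetrizing in $z,z'$) rather than $-2\,g_N^{0,\varepsilon}g_N^{\varepsilon,0}$, and the common off-diagonal limit is the standard Brownian Green's function $G$ — the Brascamp--Lieb bound involves the simple random walk Green's function, not the homogenized $G_\Sigma$ — with the required uniform off-diagonal convergence established in the paper by the dedicated Lemma~\ref{L2.1} rather than by (an adaptation of) Lemma~\ref{L:GFF-conv}; since only the mutual cancellation of the kernels away from the diagonal is used, your strategy goes through as stated.
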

We start by collecting the following precise (i.e.~pointwise) estimate for the kernel $g_N^{\varepsilon}$ defined in \eqref{eq:uniformint32}, at macroscopic distances, which can be seen to play a somewhat similar role in the present context as Lemma~\ref{L:g_eps1} did to deduce tightness within the proof of Proposition~\ref{L:uniformint}. For purposes soon to become clear, we also consider (cf.~\eqref{eq:uniformint32})
\begin{equation}
\label{eq:L2.3}
\tilde g_N^{\varepsilon}(z,z')=  \int g_N(z,w) \rho_N^{\varepsilon}(w,z') \, dw.
 \end{equation} 
Let $G(y-x) \equiv G(x,y)=  \frac{d}{2\pi^{d/2}} \Gamma(\frac d2-1)|x-y|^{2-d}$, for $x,y \in \R^d$ denote ($d$ times) the Green's function of the standard Brownian motion in $\R^d$, $d \geq 3$.

 \begin{lem}[$d \geq 3$] \label{L2.1}For all $\varepsilon > 0$ and $h_N^{\varepsilon} \in \{  g_N^{\varepsilon}, \tilde g_N^{\varepsilon} \}$,
 \begin{equation}
 \label{eq:L2.100}
\lim_{N} \sup_{|y-z| > 3 \varepsilon} \big| h_N^{\varepsilon}(y,z)-G(y,z)\big| =0.
 \end{equation}
 \end{lem}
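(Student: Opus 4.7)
The plan is to reduce the claim to two classical ingredients: (i) the uniform convergence at macroscopic distances of the rescaled lattice Green's function $g_N$ to the continuum Green's function $G$, supplied by standard discrete Green's function asymptotics (a local CLT), and (ii) the harmonicity of $G(y,\cdot)$ on $\R^d\setminus\{y\}$, which allows one to neutralise the mollification produced by $\rho_N^\varepsilon$.

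I would first treat the simpler case $h_N^\varepsilon = \tilde g_N^\varepsilon$. A direct computation from \eqref{eq:rho_eps} using Fubini gives $\int \rho_N^\varepsilon(w,z)\,dw = 1$, which legitimises the decomposition
$$\tilde g_N^\varepsilon(y,z) - G(y,z) = \int \bigl[g_N(y,w) - G(y,w)\bigr]\rho_N^\varepsilon(w,z)\,dw + \Bigl(\int G(y,w)\,\rho_N^\varepsilon(w,z)\,dw - G(y,z)\Bigr).$$
The support of $w\mapsto \rho_N^\varepsilon(w,z)$ lies in $\{|w-z|\leq \varepsilon + c/N\}$ by \eqref{eq:rho_eps}, so when $|y-z|>3\varepsilon$ and $N$ is large, $|y-w|\geq 2\varepsilon$ throughout this support. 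The first integral is then controlled by classical lattice Green's function asymptotics giving $\sup_{|a-b|\geq 2\varepsilon}|g_N(a,b)-G(a,b)| \to 0$ as $N\to\infty$, and integration against the probability density $\rho_N^\varepsilon(\cdot,z)\,dw$ yields an $o_N(1)$ error uniformly over the admissible pairs $(y,z)$.

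The second term is a pure mollification error. Passing to the limit $N\to\infty$ inside the integral (by dominated convergence, using the pointwise convergence $\rho_N^\varepsilon(\cdot,z) \to \rho^\varepsilon(\cdot - z)$ in $L^1$ and the local smoothness of $G(y,\cdot)$ on the relevant region), one reduces it to $(G(y,\cdot)\ast \rho^\varepsilon)(z) - G(y,z)$. Since $G(y,\cdot)$ is harmonic on $B(z,|y-z|)$ and this ball contains $\textnormal{supp}(\rho^\varepsilon(\cdot-z))$, the mean-value property yields that this difference vanishes, once $\rho$ is taken radially symmetric (which can be assumed without loss of generality, as the constants in the statements invoking $\rho$ elsewhere in the paper do not depend on its precise shape). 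The case $h_N^\varepsilon = g_N^\varepsilon$ then follows by iterating: one writes $g_N^\varepsilon(y,z) = \int \rho_N^\varepsilon(y,v)\,\tilde g_N^\varepsilon(v,z)\,dv$ and applies the $\tilde g_N^\varepsilon$-result uniformly on the set $\{v : |v-z|\geq 2\varepsilon\}$, which contains the support of $v\mapsto\rho_N^\varepsilon(y,v)$ when $|y-z|>3\varepsilon$ and $N$ is large.

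The main technical obstacle will be making the limit swap in the mollification term quantitative and uniform in $(y,z)$: one needs the $N$-dependent discrepancy between $\rho_N^\varepsilon(\cdot,z)$ and $\rho^\varepsilon(\cdot-z)$, which is $O(1/N)$ pointwise, to produce an error in the integral that is $o_N(1)$ uniformly on $\{|y-z|>3\varepsilon\}$. This is handled by combining the smoothness of $\rho$ with a Taylor expansion of $G(y,\cdot)$ on $B(z,\varepsilon)$ and local $L^\infty$ bounds on $G$ and its derivatives away from the diagonal, both of which are available since $|y-w|\geq 2\varepsilon$ throughout.
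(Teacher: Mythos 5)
Your proposal follows essentially the same route as the paper's proof: both rest on (i) the uniform convergence of $g_N$ to $G$ at distances bounded away from the diagonal (the lattice Green's function asymptotics of Lawler cited in the appendix), (ii) harmonicity of $G$ off the diagonal plus the mean-value property to show that mollification at scale $\varepsilon$ does not alter $G$ at larger distances, and (iii) a routine control of the discrepancy between $\rho_N^{\varepsilon}$ and $\rho^{\varepsilon}$. The organization differs: the paper treats $g_N^{\varepsilon}$ first, via intermediate kernels obtained by replacing the occurrences of $\rho_N^{\varepsilon}$ in \eqref{eq:uniformint32} by $\rho^{\varepsilon}$ one at a time and comparing with $G^{\varepsilon}=\rho^{\varepsilon}\ast G\ast\rho^{\varepsilon}$, whereas you treat $\tilde g_N^{\varepsilon}$ first and then want to deduce the $g_N^{\varepsilon}$ case by composition. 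Two points in your write-up need repair. First, the identity $g_N^{\varepsilon}(y,z)=\int \rho_N^{\varepsilon}(y,v)\,\tilde g_N^{\varepsilon}(v,z)\,dv$ is not exact: by \eqref{eq:uniformint32} and \eqref{eq:L2.3} the right-hand side carries the factor $\rho_N^{\varepsilon}(w,z)$ while $g_N^{\varepsilon}$ carries $\rho_N^{\varepsilon}(z,w)$, and $\rho_N^{\varepsilon}$ is not symmetric in its two arguments (in \eqref{eq:rho_eps} only the second argument is cell-averaged, and $\rho$ need not be even). The discrepancy does vanish as $N\to\infty$, but this requires one more replacement step of exactly the type the paper performs around \eqref{eq:L2.103}; alternatively, run your two-term decomposition directly on $g_N^{\varepsilon}$. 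Relatedly, you invoke the $\tilde g_N^{\varepsilon}$ convergence on $\{v:|v-z|\geq 2\varepsilon\}$ rather than on $\{|v-z|>3\varepsilon\}$ as stated; your argument does deliver this after adjusting the thresholds, but it should be said explicitly.

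Second, the mean-value step genuinely requires $\rho$ to be radially symmetric: for a general mollifier, $G\ast\rho^{\varepsilon}$ differs from $G$ even away from the support (the multipole corrections do not vanish), and ``without loss of generality radially symmetric'' is not a licit reduction for a statement about the kernel built from a fixed $\rho$ — changing $\rho$ changes $g_N^{\varepsilon}$. Note, however, that the paper's own derivation of \eqref{eq:L2.101.0} makes the same implicit assumption, so this is a shared caveat rather than a deviation from the paper's argument; it just deserves an honest remark rather than a ``WLOG''.
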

 The proof of Lemma~\ref{L2.1} is deferred to Appendix~\ref{A:Green}. We proceed with the
\begin{proof}[Proof of Proposition~\ref{P:tightness_phi2.2}]
Since $F(\varphi)\equiv  \int V(z) [  :\varphi_N(z)^2:    -    :\varphi_N^{\varepsilon}(z)^2:  ] dz $ is centered, the square of its $L^2$-norm is a variance. Applying~\eqref{eq:BLcor2} (see also \eqref{eq:uniformint31}) and using \eqref{eq:deriv2} yields 
\begin{equation}
\label{eq:L2.1}
\left\Vert F \right\Vert_{L^2(\mu)}^2 \leq 4 \Cr{c:var-bound} \iint  V(z)  k_N^{\varepsilon}(z,z') V(z')  dz dz'
\end{equation}
for all $\varepsilon>0$ and $N \geq 1$, where
\begin{equation}
\label{eq:L2.2}
 k_N^{\varepsilon}(z,z')= g_N^{\varepsilon}(z,z')^2- \tilde g_N^{\varepsilon}(z,z')^2 +  g_N^{0}(z,z')^2- \tilde g_N^{\varepsilon}(z,z')^2, \quad z,z' \in \R^3,  
 \end{equation}
with $g_N^\varepsilon$ and $\tilde g_N^{\varepsilon}$ as in \eqref{eq:uniformint32} and \eqref{eq:L2.3}, respectively (hence the introduction of $\tilde g_N^{\varepsilon}$).
 
 We will deal with the short- and long-distance contributions (i.e.~$|z-z'| \lesssim  \varepsilon$ or not) to~\eqref{eq:L2.1} separately.
Henceforth, we tacitly assume that $N \geq c\varepsilon^{-1}$, which is no loss of generality. We claim that for $h \in \{g_N^{0}, g_N^{\varepsilon}, \tilde g_N^{\varepsilon}\}$ (and $N \geq c\varepsilon^{-1}$),
\begin{equation}
\label{eq:L2.4}
\sup_z \int_{|z-z'| \leq 3\varepsilon} V(z') h(z,z')^2 dz' \leq c \Vert V \Vert_{\infty} (\Vert \rho \Vert_{\infty}\vee 1)^{c'}\varepsilon.
\end{equation}
Indeed, for $h=g_N^0$ or $g_N^{\varepsilon}$, this is \eqref{eq:g_eps5}, and the case $h=  \tilde g_N^{\varepsilon}$ is dealt with similarly upon noticing that $\tilde g_N^{\varepsilon}(z,z') \leq c \Vert \rho \Vert_{\infty} \varepsilon^{-1}$ for $|z-z'| \leq 3\varepsilon$. The latter is obtained in much the same way as the argument following \eqref{eq:g_eps4}: the absence of a mollification with $\rho_{N}^{\varepsilon}$ from the left,  cf.~\eqref{eq:L2.3} and \eqref{eq:uniformint32}, will effectively make the first supremum on the right of \eqref{eq:g_eps4} disappear; the rest of the argument is the same. Returning to \eqref{eq:L2.1}, restricting to the set $|z-z'| \leq 3\varepsilon$, bounding the kernel in \eqref{eq:L2.2} by a sum of positive kernels and applying \eqref{eq:L2.4} readily gives 
\begin{equation}
\label{eq:L2.4}
\sup_{N \geq c\varepsilon^{-1}} \iint_{|z-z'| \leq 3\varepsilon}  V(z)  k_N^{\varepsilon}(z,z')^2 V(z')  dz dz' \leq c \Vert V \Vert_{1} \Vert V \Vert_{\infty} (\Vert \rho \Vert_{\infty}\vee 1)^{c'}\varepsilon.
\end{equation}
(note that \eqref{eq:L2.4} is specific to $d=3$; the rest of the proof isn't).

We now consider the case $|z-z'| > 3\varepsilon$, which exploits cancellations in \eqref{eq:L2.2}. Adding and subtracting $G$ (see above Lemma~\ref{L2.1} for notation) in \eqref{eq:L2.2}, using the elementary estimate $a^2-b^2\leq (|a| + |b|) |a-b|$, one sees that for all $N \geq 1$ and $\varepsilon > 0$,
\begin{equation}
\label{eq:L2.5}
\begin{split}
 &\iint_{|z-z'| > 3\varepsilon}  V(z)  k_N^{\varepsilon}(z,z')^2 V(z')  dz dz' \\
 & \qquad \qquad \leq 8 \sup_{h,h'}  \iint_{|z-z'| > 3\varepsilon} | V|(z) h(z,z') |h'(z,z')-G(z,z')| \,|V|(z')  dz dz'
 \end{split}
 \end{equation}
 where $h, h' \in \{g_N^0,g_N^{\varepsilon}, \tilde{g}_N^{\varepsilon} \}$. Now, using \eqref{eq:tight10.0} and its analogue for $ \tilde{g}_N^{\varepsilon} $, one obtains that \begin{equation}
 \label{eq:L2.104'}
 \sup_{ N \geq c\varepsilon^{-1}} \big(\Vert G_N^{\varepsilon} V \Vert_{\infty} \vee \Vert \tilde{G}_N^{\varepsilon} V \Vert_{\infty} \big) \leq c( \Vert \rho \Vert_{\infty} \vee 1)^{c'} L^2 \Vert V \Vert_{\infty}
 \end{equation}
 where, with hopefully obvious notation, $\tilde{G}_N^{\varepsilon}$ is the operator with kernel $\tilde{g}_N^{\varepsilon} $; cf.~above Lemma~\ref{L:tight10} for notation.
 Going back to \eqref{eq:L2.5}, bounding $|h'(z,z')-G(z,z')|$ by its supremum over $|z-z'| > 3\varepsilon$ and estimating the remaining integral over $ | V|(z) h(z,z') |V|(z')$ 
using \eqref{eq:L2.104} and \eqref{eq:L2.104'}, one sees that the right-hand side of \eqref{eq:L2.5} is bounded for $N \geq c\varepsilon^{-1}$ by 
 $$
 c \Vert V \Vert_1 \Vert V \Vert_\infty ( \Vert \rho \Vert_{\infty} \vee 1)^{c'} 
 \sup_{ h  } \sup_{|z-z'| > 3\varepsilon} |h(z,z')-G(z,z')|,
 $$
 where the sup is over $h \in \{g_N^0,g_N^{\varepsilon}, \tilde{g}_N^{\varepsilon} \}$, which in particular tends to $0$ as $N \to \infty$ on account of \eqref{eq:L2.100}. Together with \eqref{eq:L2.1} and \eqref{eq:L2.4}, this readily yields \eqref{Ptight2.1}, for suitable choice of $\Cr{c:L2}$.
 
 The proof of \eqref{Ptight2.1-lin} is simpler. Proceeding as with \eqref{Ptight2.1}, using \eqref{eq:BLcor2} (or \eqref{eq:BLcor1}), one obtains a bound of the form \eqref{eq:L2.1} where $k_N^{\varepsilon}= g_N^\varepsilon - g_N^0$. The proof then proceeds by adding and subtracting $G$, splitting the resulting integral and using \eqref{eq:L2.100} to control the long-distance behavior. 
  \end{proof}
  

\subsection{Convergence of smooth approximation} \label{sec:approx-2}

As a last ingredient for the proof of Theorem~\ref{T:limit_2}, we gather here the convergence of the smooth field $\varphi_N^{\varepsilon}$ introduced in \eqref{eq:scalingtight2}. This convergence is not specific to dimension $d=3$. In a sense, \eqref{eq:Psi-epsilon-1} below can be viewed (at the level of finite-dimensional marginals) as a consequence of \eqref{eq:NS2}. Some care is needed to improve this convergence to a suitable functional level, which requires controlling the modulus of continuity of $\varphi_N^{\varepsilon}$. This will bring into play Lemma~\ref{L:corBL2}.

 Define the centered Gaussian field 
 \begin{equation}\label{eq:Psi-epsilon}
 \Psi^{\varepsilon}(z)= \langle \Psi,\rho^{\varepsilon,z} \rangle , \quad z \in \R^d
 \end{equation}
  where $\rho=\rho^1$ refers to the choice of mollifier above \eqref{eq:scalingtight2} and $\psi$ is defined in \eqref{eq:PSigma}. In the sequel we regard both the law of $\varphi_N^{\varepsilon}= ( \varphi_{N}^{\varepsilon}(z))_{z\in \R^d}$ under $\mu$ and $\Psi^{\varepsilon}= ( \Psi^{\varepsilon}(z))_{z\in \R^d}$ under $P^{\Sigma}$ as probability measures on $C=C(\R^d,\R)$ (which is all the regularity we will need in the sequel), endowed with its canonical $\sigma$-algebra.

\begin{proposition}[$d\geq3,  \, \varepsilon \in (0,1) $]\label{P:conv-eps}  
\begin{equation}
\label{eq:Psi-epsilon-1}
\begin{split}
&\text{The law of $ \varphi_N^{\varepsilon} $ under $\mu$ converges weakly to the law of $\Psi^{\varepsilon}$ under $P^{\Sigma}$ as $N \to \infty$.}
\end{split}
\end{equation}
\end{proposition}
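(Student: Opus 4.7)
The plan is to establish the two standard ingredients for weak convergence in $C(\R^d,\R)$ (endowed with the compact-open topology): (i) convergence of finite-dimensional marginals, and (ii) tightness, the latter via a Kolmogorov-Chentsov type moment bound on increments. Recall from \eqref{eq:scalingtight2} that $\varphi_N^{\varepsilon}(z)= \langle \Phi_N, \rho^{\varepsilon,z}\rangle$, where $\rho^{\varepsilon,z} \in C_0^{\infty}(\R^d)$.

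For (i), given $z_1,\dots,z_k \in \R^d$ and $a_1,\dots,a_k \in \R$, set $W = \sum_i a_i \rho^{\varepsilon,z_i} \in C_0^{\infty}(\R^d)$. Then $\sum_i a_i \varphi_N^{\varepsilon}(z_i) = \langle \Phi_N, W\rangle$ converges in law as $N \to \infty$ to $\langle \Psi, W\rangle = \sum_i a_i \Psi^{\varepsilon}(z_i)$ by Corollary~\ref{T:NS}. The Cram\'er-Wold device then yields joint convergence of $(\varphi_N^{\varepsilon}(z_1),\dots, \varphi_N^{\varepsilon}(z_k))$ to $(\Psi^{\varepsilon}(z_1),\dots, \Psi^{\varepsilon}(z_k))$.

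For (ii), fix a compact set $K \subset \R^d$. Writing $\varphi_N^{\varepsilon}(z) - \varphi_N^{\varepsilon}(z') = \langle \varphi, V_{N,z,z'}\rangle_{\ell^2(\Z^d)}$ with
\[ V_{N,z,z'}(x) = d^{-1/2}N^{d/2-1}\int_{x/N+[0,1/N)^d}\bigl(\rho^{\varepsilon}(z-w)-\rho^{\varepsilon}(z'-w)\bigr)dw, \]
an application of Lemma~\ref{L:corBL2} (valid since we are working under $\mu=\mu_{0,0}$, i.e.~$Q_\lambda \equiv 0$) gives, for every integer $k \geq 1$,
\[ E_\mu\bigl[ (\varphi_N^{\varepsilon}(z)-\varphi_N^{\varepsilon}(z'))^{2k}\bigr] \leq c(2k)\, \langle V_{N,z,z'}, G V_{N,z,z'}\rangle_{\ell^2}^k. \]
Using the standard bound $g(x,y) \leq c(1+|x-y|)^{2-d}$ together with a Riemann-sum approximation and the change of variables $u=x/N$, $v=y/N$, one finds that $\langle V_{N,z,z'}, G V_{N,z,z'}\rangle$ is bounded uniformly in $N$ by a constant multiple of $\iint |h(u)||h(v)||u-v|^{2-d}\,du\,dv$, where $h(u)=\rho^{\varepsilon}(z-u)-\rho^{\varepsilon}(z'-u)$. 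Since $\rho \in C_0^{\infty}$, the function $h$ satisfies $\|h\|_\infty \leq C_\rho \varepsilon^{-d-1}|z-z'|$ and $\mathrm{supp}(h)$ lies in a ball of radius $O(\varepsilon)$ for $|z-z'| \leq \varepsilon$; integrating $|u-v|^{2-d}$ over such a ball yields a volume term $O(\varepsilon^{d+2})$, so
\[ \sup_{N \geq 1}\langle V_{N,z,z'}, G V_{N,z,z'}\rangle \leq C(\varepsilon)\, |z-z'|^2, \quad z,z' \in K,\ |z-z'|\le \varepsilon. \]
Hence $E_\mu[(\varphi_N^{\varepsilon}(z)-\varphi_N^{\varepsilon}(z'))^{2k}] \leq C(\varepsilon,k)|z-z'|^{2k}$; choosing $k>d/2$ triggers the Kolmogorov-Chentsov criterion and yields tightness of $\{\varphi_N^{\varepsilon}|_K\}$ in $C(K,\R)$, after noting that a uniform $L^{2k}$-bound on $\varphi_N^{\varepsilon}(z_0)$ for a fixed $z_0 \in K$ follows from the same argument applied to $V_{N}^{\varepsilon,z_0}$ in place of $V_{N,z,z'}$. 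Combining (i) and (ii) over an exhaustion of $\R^d$ by compact sets completes the proof. The main technical obstacle is the increment bound above, which is where the Brascamp-Lieb consequence (Lemma~\ref{L:corBL2}) is essential, as no Gaussian tool is directly available for the non-Gaussian field $\varphi$.
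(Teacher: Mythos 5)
Your proposal is correct and follows essentially the same route as the paper: finite-dimensional convergence is deduced from the Naddaf--Spencer result (Corollary~\ref{T:NS}, i.e.~\eqref{eq:NS2}) applied to linear combinations of the mollifiers, and tightness in $C$ is obtained from $2k$-th moment bounds on increments via Lemma~\ref{L:corBL2} together with a Kolmogorov-type criterion. The only (harmless) difference is in how the quadratic form $\langle V_{N,z,z'}, G V_{N,z,z'}\rangle_{\ell^2}$ is estimated: you exploit the Lipschitz continuity of $\rho^{\varepsilon}$ directly and obtain the exponent $|z-z'|^{2k}$, whereas the paper passes through the uniform Lipschitz bound $\sup_N |g_N^{\varepsilon}(x,z)-g_N^{\varepsilon}(x,w)| \leq c(\varepsilon)|z-w|$ and obtains exponent $k$ as in \eqref{eq:scalinglimit26}; both suffice for the tightness criterion.
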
  

The proof of \eqref{eq:Psi-epsilon-1} will follow readily from the next two lemmas. We first establish convergence of finite-dimensional marginals and then deal with the regularity estimate needed to deduce convergence in $C$.
\begin{lem} For $K \subset \R^d$ a finite set,
\begin{equation}
\label{eq:scalinglimit24}
(\varphi_{N}^{\varepsilon}(z): z \in K) \stackrel{d}{\longrightarrow} (\psi^{\varepsilon}(z) : z \in K) \text{ as } N \to \infty.
\end{equation}
\end{lem}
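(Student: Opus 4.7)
The plan is to reduce \eqref{eq:scalinglimit24} to the scaling limit result of Naddaf--Spencer (Corollary~\ref{T:NS}), which was already established as a consequence of Proposition~\ref{L:uniformint} applied with $V\equiv 0$. By the Cram\'er--Wold device, it is enough to show that for every finite set $K\subset\R^d$ and every choice of real coefficients $(a_z)_{z\in K}$,
\begin{equation*}
S_N^\varepsilon \stackrel{\textnormal{def.}}{=} \sum_{z\in K} a_z \varphi_N^\varepsilon(z) \stackrel{d}{\longrightarrow} \sum_{z\in K} a_z \Psi^\varepsilon(z) \stackrel{\textnormal{def.}}{=} S^\varepsilon
\end{equation*}
as $N\to\infty$.

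First I would rewrite $S_N^\varepsilon$ as a single smeared linear functional of $\Phi_N$. By \eqref{eq:scalingtight2}, $\varphi_N^\varepsilon(z)=\langle \Phi_N,\rho^{\varepsilon,z}\rangle$, so $S_N^\varepsilon=\langle \Phi_N,W\rangle$ with $W=\sum_{z\in K}a_z\rho^{\varepsilon,z}\in C_0^\infty(\R^d)$ (recall $\rho$ is smooth, non-negative and compactly supported, so $W$ has support in the $\varepsilon$-fattening of $K$). Correspondingly, $S^\varepsilon=\langle \Psi,W\rangle$ by \eqref{eq:Psi-epsilon}.

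The convergence $\langle \Phi_N,W\rangle \stackrel{d}{\to} \langle \Psi,W\rangle$ follows from Corollary~\ref{T:NS}: applying \eqref{eq:NS2} to $tW$ for any real $t$, one obtains
\begin{equation*}
\lim_{N\to\infty} E_\mu\bigl[e^{t\langle\Phi_N,W\rangle}\bigr]=\exp\bigl\{\tfrac{t^2}{2}E_\Sigma(W,W)\bigr\}=E^\Sigma\bigl[e^{t\langle\Psi,W\rangle}\bigr],
\end{equation*}
which is the moment generating function of $\langle\Psi,W\rangle$. Since this converges in an open neighbourhood of $t=0$ (in fact for all $t\in\R$, using that $|t|W$ still satisfies the hypotheses of Corollary~\ref{T:NS}), convergence of Laplace transforms implies convergence in distribution. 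Applying Cram\'er--Wold yields the joint convergence \eqref{eq:scalinglimit24}. The only delicate point to verify is that $W=\sum_{z\in K}a_z\rho^{\varepsilon,z}$ genuinely lies in $C_0^\infty(\R^d)$ and is therefore an admissible test function for \eqref{eq:NS2}, which is immediate from the smoothness and compact support of $\rho$; no regularity of the limit field is needed at this stage since only marginal convergence is claimed here.
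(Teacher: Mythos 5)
Your proposal is correct and follows essentially the same route as the paper: the paper likewise writes an arbitrary linear combination $\sum_{z\in K}\lambda_z\varphi_N^{\varepsilon}(z)$ as $\langle \Phi_N,W\rangle$ with $W=\sum_{z\in K}\lambda_z\rho^{\varepsilon,z}\in C_0^{\infty}(\R^d)$, applies \eqref{eq:NS2} to get convergence of the exponential moments to $\exp\{\tfrac12 E_{\Sigma}(W,W)\}$, and identifies $E_{\Sigma}(W,W)=\sum_{z,z'}\lambda_z E^{\Sigma}[\Psi^{\varepsilon}(z)\Psi^{\varepsilon}(z')]\lambda_{z'}$ via \eqref{eq:Psi-epsilon} and \eqref{eq:PSigma}, which is exactly your Cram\'er--Wold argument. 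Your extra remarks on why convergence of Laplace transforms in a neighbourhood of the origin yields convergence in law, and on the admissibility of $W$, are fine but not points the paper needed to elaborate.
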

\begin{proof}
For $\lambda_z \in \R$, let $W(\cdot) = \sum_{z \in K}\lambda_z \rho^{\varepsilon,z}(\cdot)$ which is in $C_{0}^{\infty}(\R^d)$ by assumption on $\rho$, cf.~above \eqref{eq:scalingtight2}. Then by \eqref{eq:NS2}
\begin{equation*}
2 \log E_{\mu}[e^{\sum_{z \in K}\lambda_z \varphi_{N}^{\varepsilon}(z)}]= 2 \log E_{\mu}[e^{\langle W, \varphi_N\rangle}] \stackrel{N}{\longrightarrow}  E_{\Sigma}(W,W)= \sum_{z,z'}\lambda_z E^{\Sigma}[ \Psi^{\varepsilon}(z)  \Psi^{\varepsilon}(z')] \lambda_z'
\end{equation*}
using \eqref{eq:Psi-epsilon} and \eqref{eq:PSigma} for the last equality. Thus \eqref{eq:scalinglimit24} holds.
\end{proof}

To establish the required regularity, we use Lemma~\ref{L:corBL2} to control higher moments.

\begin{lem}[$\varepsilon > 0$, $d \geq3$] For all $k \geq 1$, $z,w \in \R^d$,
\begin{align}
&\label{eq:scalinglimit25}
\sup_N E_{\mu}\big[ \big|\varphi_{N}^{\varepsilon}(0)| \big] \leq c(\varepsilon),\\
&\label{eq:scalinglimit26}
\sup_N E_{\mu}\big[ \big|\varphi_{N}^{\varepsilon}(z) - \varphi_{N}^{\varepsilon}(w)|^{2k} \big] \leq c(k,\varepsilon) |z-w|^k,
\end{align}
\end{lem}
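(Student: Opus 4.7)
The starting point is to rewrite the regularized field as a linear functional of $\varphi$. Using \eqref{eq:scalingtight2} and \eqref{eqscalingtightnew1}, one has $\varphi_N^{\varepsilon}(z) = \langle \varphi, V_N^{\varepsilon,z}\rangle_{\ell^2(\Z^d)}$, where
\begin{equation*}
V_N^{\varepsilon,z}(x) \stackrel{\text{def.}}{=} d^{-1/2}\, N^{d/2-1} \int_{x/N + [0,1/N)^d} \rho^{\varepsilon}(z-w)\, dw, \quad x \in \Z^d,
\end{equation*}
has finite support contained in a set of cardinality $O((N\varepsilon)^d)$. Since the trivial choice $Q_{\lambda}\equiv 0$ satisfies \eqref{eq:Q-BL-2}--\eqref{eq:Q-BL-3}, Lemma~\ref{L:corBL2} applied to $\mu=\mu_{0,0}$ yields, for every integer $k \geq 1$ and every $z,w \in \R^d$,
\begin{equation*}
E_\mu\big[ \big|\varphi_N^{\varepsilon}(z) - \varphi_N^{\varepsilon}(w)\big|^{2k}\big] \leq c(2k)\,\langle U_N^{z,w}, G\, U_N^{z,w}\rangle_{\ell^2}^{\,k}, \quad U_N^{z,w} \stackrel{\text{def.}}{=} V_N^{\varepsilon,z} - V_N^{\varepsilon,w},
\end{equation*}
with $G$ the convolution operator associated with the kernel $g$ from~\eqref{eq:srw-g}. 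Both bounds \eqref{eq:scalinglimit25} and \eqref{eq:scalinglimit26} then reduce to quadratic-form estimates on $\ell^2(\Z^d)$.

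A direct change of variables comparing $V_N^{\varepsilon,z}$ against \eqref{eq:rho_eps} produces the identity $\langle V_N^{\varepsilon,z}, G\, V_N^{\varepsilon,w}\rangle_{\ell^2} = g_N^{\varepsilon}(z,w)$ in the notation of \eqref{eq:uniformint32}. By Jensen's inequality, Lemma~\ref{L:corBL2} (with $k=1$) and the diagonal bound of Lemma~\ref{L:g_eps1}, I obtain
\begin{equation*}
E_\mu\big[|\varphi_N^{\varepsilon}(0)|\big] \leq E_\mu\big[\varphi_N^{\varepsilon}(0)^2\big]^{1/2} \leq c\, g_N^{\varepsilon}(0,0)^{1/2} \leq c\,\|\rho\|_\infty\, \varepsilon^{(2-d)/2}
\end{equation*}
uniformly in $N \geq \varepsilon^{-1}$, which proves~\eqref{eq:scalinglimit25}.

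For \eqref{eq:scalinglimit26}, the task reduces to showing the bound $\langle U_N^{z,w}, G\, U_N^{z,w}\rangle_{\ell^2} \leq c(\varepsilon)|z-w|^2$, which raised to the $k$-th power yields a control even stronger than the stated one. I would treat first the regime $|z-w| \leq \varepsilon$, exploiting the smoothness of $\rho^{\varepsilon}$: since $\|\nabla \rho^{\varepsilon}\|_\infty \leq c\,\varepsilon^{-d-1}$, one has the pointwise estimate $|U_N^{z,w}(x)| \leq c\, N^{-d/2-1}\varepsilon^{-d-1}|z-w|$ on a support of size $O((N\varepsilon)^d)$. Combining this with the standard heat-kernel bound $g(x,y) \leq c\,(1+|x-y|)^{2-d}$ and the ball estimate $\sum_{y \in B(x,R)} g(x,y) \leq cR^2$ (valid for $d \geq 3$) and summing over the support gives
\begin{equation*}
\langle U_N^{z,w}, G\, U_N^{z,w}\rangle_{\ell^2} \leq c\, N^{-d-2}\varepsilon^{-2d-2}|z-w|^2 \cdot (N\varepsilon)^d \cdot (N\varepsilon)^2 = c\, \varepsilon^{-d}|z-w|^2.
\end{equation*}
The complementary regime $|z-w| > \varepsilon$ follows from the triangle inequality together with the already-established diagonal bound and the trivial inequality $1 \leq |z-w|^2/\varepsilon^2$.

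\textbf{Main obstacle.} No serious difficulty is expected: once the field is linearized and Lemma~\ref{L:corBL2} is invoked to turn moments into the quadratic form $\langle \cdot,G\,\cdot\rangle_{\ell^2}$, everything rests on tracking the scaling of the smoothing kernel $\rho^{\varepsilon}$ (compact support and Lipschitz constant) against classical pointwise and local integral bounds for the discrete Green's function $g$, all of which are elementary for $d\geq 3$ and dimension-independent.
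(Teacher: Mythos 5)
Your argument is correct in substance and shares the paper's skeleton for \eqref{eq:scalinglimit25}: centering plus Cauchy--Schwarz, the Brascamp--Lieb moment bound (Lemma~\ref{L:corBL2}, equivalently \eqref{eq:BLcor1} with the trivial tilt $Q\equiv 0$), the identification of the resulting quadratic form with $g_N^{\varepsilon}$ from \eqref{eq:uniformint32}, and the diagonal bound of Lemma~\ref{L:g_eps1}; this is exactly what the paper does. For \eqref{eq:scalinglimit26} the reduction via \eqref{e:corBL2.1} to $\langle U_N^{z,w}, G\,U_N^{z,w}\rangle_{\ell^2}^{\,k}$ is again the same, but your estimate of the quadratic form is genuinely different: the paper bounds $g_N^{\varepsilon}(z,z)-2g_N^{\varepsilon}(z,w)+g_N^{\varepsilon}(w,w)$ through the uniform Lipschitz property $\sup_N|g_N^{\varepsilon}(x,z)-g_N^{\varepsilon}(x,w)|\leq c(\varepsilon)|z-w|$, proved by splitting at distance $3\varepsilon$ and invoking a gradient bound on $g_N^{\varepsilon}$ near the diagonal together with the convergence \eqref{eq:L2.100} of Lemma~\ref{L2.1} away from it, which yields the exponent $|z-w|^k$. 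You instead estimate $\|U_N^{z,w}\|_{\infty}$ by the Lipschitz constant of $\rho^{\varepsilon}$ and sum the discrete Green's function over the $O((N\varepsilon)^d)$-point support, which is more elementary (no appeal to Lemma~\ref{L2.1} or to derivatives of $g_N^{\varepsilon}$) and even gives the stronger local bound $\langle U,GU\rangle\leq c(\varepsilon)|z-w|^2$; this is the quantitative advantage your route buys, at the cost of a slightly cruder constant in $\varepsilon$.

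One small imprecision: the claim that $|z-w|^{2k}$ is ``even stronger'' than the stated $|z-w|^k$ is only true for $|z-w|\leq 1$; for $|z-w|>1$ it does not imply \eqref{eq:scalinglimit26}. This is harmless, since your two regimes combined give $\langle U,GU\rangle\leq c(\varepsilon)\min\{|z-w|^2,\varepsilon^{2-d}\}\leq c'(\varepsilon)\,|z-w|$, which recovers the lemma as stated (and the sharper local bound is all that is used for the modulus-of-continuity estimate). You should also say a word about the finitely many $N< c\,\varepsilon^{-1}$ not covered by Lemma~\ref{L:g_eps1} and the support count: there a crude bound such as $\langle U,GU\rangle\leq g(0,0)\,\|U\|_{\ell^1}^2\leq c(N,\varepsilon)|z-w|^2$ suffices and is absorbed into $c(\varepsilon)$.
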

\begin{proof}
Using \eqref{eq:BLcor1} (or \eqref{e:corBL2.1} with $k=1$) one obtains that $\text{var}_{\mu}(\varphi_N^{\varepsilon}(0))\leq c g_N^{\varepsilon}(0,0) $, with $g_N^{\varepsilon}$ as in \eqref{eq:uniformint32}. The uniform (in $N$) bound \eqref{eq:scalinglimit25} then follows from Lemma~\ref{L:g_eps1} and Cauchy-Schwarz.

Proceeding similarly, using  \eqref{e:corBL2.1} for $k\geq 1$, one deduces \eqref{eq:scalinglimit26} using the fact that $$ \sup_N |g_N^{\varepsilon}(x,z)- g_N^{\varepsilon}(x,w)| \leq c(\varepsilon) |w-z|,  \ x  \in \R^d,$$ which is obtained by considering the cases $|x-z| \leq 3 \varepsilon$ and $> 3\varepsilon$ separately, using e.g.~\eqref{eq:L2.100} in the latter case and the uniform bound $\sup_{|z| \leq 3 \varepsilon}|\nabla_z g_N^{\varepsilon}(0,z)| \leq c(\varepsilon)$ in the former case. 
\end{proof}

\begin{proof}[Proof of Proposition~\ref{P:conv-eps}] Let $\eta >0$. Using \eqref{eq:scalinglimit25} one finds $a = a(\eta, \varepsilon) \in (0,\infty)$ such that
\begin{equation}
\label{eq:scalinglimit27}
 P_{\mu}\big[ \big|\varphi_{N}^{\varepsilon}(0)| \geq a \big] \leq \eta, \text{ for all } N \geq1. 
 \end{equation}
 Let $w_N(\delta)= \sup_{|z-w| \leq \delta} |\varphi_{N}^{\varepsilon}(z) - \varphi_{N}^{\varepsilon}(w)|$ denote the modulus of continuity of $z \mapsto \varphi_{N}^{\varepsilon}(z)$. Using \eqref{eq:scalinglimit26} with, say, $k=d+1$, one classically deduces, see e.g.~\cite{zbMATH02241903}, Cor.~2.1.4 for a similar argument when $d=1$, see also~\cite{zbMATH03980110}, Lemma 1.2, for a multi-dimensional version of Theorem~2.1.3 in \cite{zbMATH02241903}, which is used to deduce Cor.~2.1.4, that
\begin{equation}
 \label{eq:scalinglimit28}
 \lim_{\delta \to 0} \limsup_{N \to \infty }P_{\mu}\big[ w_N(\delta) \geq \eta \big]=0.
 \end{equation}
 Together, \eqref{eq:scalinglimit27} and \eqref{eq:scalinglimit28} imply tightness in $C$ of the family of laws on the left of \eqref{eq:Psi-epsilon-1}, see \cite{MR1700749} Thms.~7.3 and 15.1, and the asserted convergence in  \eqref{eq:Psi-epsilon-1} follows upon using \eqref{eq:scalinglimit24} to identify the limit.
\end{proof}

\section{Denouement}\label{sec:denouement}

With the results of the previous section at hand, notably Propositions~\ref{L:uniformint},~\ref{P:tightness_phi2.2} and \ref{eq:Psi-epsilon-1}, we have gathered the necessary tools to proceed to the

\subsection{Proof of Theorem~\ref{T:limit_2}}\label{sec:MAINPF} Throughout this section, we assume that 
$V,W \in C_{0}^{\infty}(\R^3)$ with $V$ satisfying~\eqref{e:cond-V} and 
\begin{equation}\label{eq:lambda-cond}
\lambda< \textstyle\frac12 \Cr{c:lambda_1} \wedge \Cr{c:lambda_2}
\end{equation}
(cf.~Propositions~\ref{L:uniformint} and~\ref{P:uniformintGFF}). For such $V,W$, we introduce the shorthand (recall $\varphi_N$ from~\eqref{eqscalingtightnew1})
\begin{equation}\label{eq:xi_N}
\xi_N \stackrel{\text{def.}}{=} \frac12 \int V(z)  \varphi_N^2(z)   dz + \int W(z) \varphi_N(z) dz\end{equation}
and $\xi_N^{\varepsilon}$ defined analogously with $\varphi_N^{\varepsilon}$ (see \eqref{eq:scalingtight2}) in place of $\varphi_N$ everywhere.
The proof of Theorem~\ref{T:limit_2} combines the following three claims, which correspond to three distinct steps in taking the scaling limit. Of these three steps, only the first and last, cf.~\eqref{eq:scalinglimit13} and \eqref{eq:scalinglimit15} rely on the fact that $d=3$. The first lemma asserts that the relevant generating functionals of $(\varphi_N, :\varphi_{N}^2:)$ are well approximated by those of the $\varepsilon$-regularized field $\varphi_N^{\varepsilon}$ when the mesh size $\frac1N$ is sufficiently large. This relies crucially on the $L^2$-estimate of Proposition~\ref{P:uniformintGFF}, along with the tightness implied by Proposition~\ref{L:uniformint}.
\begin{lem}[$d=3$]\label{l1}
For suitable $c(\varepsilon)\in (1,\infty)$,
 \begin{align}
 &\label{eq:scalinglimit13} \gamma(\varepsilon) \stackrel{\textnormal{def.}}{=} \sup_{N \geq c(\varepsilon)} 
 \big| E_{\mu}[e^{:\xi_N:}] - E_{\mu}[e^{:\xi_N^{\varepsilon}:}] \big|\to 0 \text{ as } \varepsilon \to 0.
 \end{align}
\end{lem}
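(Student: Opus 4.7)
The plan is to combine the three preparatory results of Section~\ref{sec:prep}. The starting point will be the elementary inequality $|e^a - e^b| \le |a-b|\,(e^a + e^b)$ applied pointwise, followed by the Cauchy-Schwarz inequality, which gives
\begin{equation*}
\big| E_\mu[e^{:\xi_N:}] - E_\mu[e^{:\xi_N^\varepsilon:}] \big| \le \big\| :\xi_N: - :\xi_N^\varepsilon: \big\|_{L^2(\mu)} \cdot \big\| e^{:\xi_N:} + e^{:\xi_N^\varepsilon:} \big\|_{L^2(\mu)}.
\end{equation*}
The matter then reduces to showing that the first factor tends to $0$ as $\varepsilon \searrow 0$, uniformly in $N \geq c(\varepsilon)$, while the second stays bounded, uniformly in $N \geq c(\varepsilon)$ and in $\varepsilon \in (0,1)$.

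For the first factor, since the linear contribution $\int W(z) \varphi_N(z)\,dz$ to $\xi_N$ is already $\mu$-centered, I will split
\begin{equation*}
:\xi_N: - :\xi_N^\varepsilon: = \tfrac12 \int V(z)\bigl[ :\varphi_N^2(z): - :(\varphi_N^\varepsilon)^2(z): \bigr]\, dz + \int W(z)\bigl[\varphi_N(z) - \varphi_N^\varepsilon(z)\bigr]\, dz
\end{equation*}
and treat the two summands separately. Proposition~\ref{P:tightness_phi2.2} delivers exactly the control needed: \eqref{Ptight2.1} takes care of the quadratic term (this is the sole step that genuinely uses $d=3$), and \eqref{Ptight2.1-lin}, applied with $W$ in place of $V$ (only the compact support of $W$ matters there), takes care of the linear one. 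Both $L^2(\mu)$-norms converge to $0$ as $\varepsilon \searrow 0$, uniformly for $N \geq \Cr{c:L2}(\varepsilon,L)$, giving $\|:\xi_N: - :\xi_N^\varepsilon:\|_{L^2(\mu)} \to 0$.

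For the second factor, I will observe that $E_\mu[e^{2:\xi_N:}] = \exp\{\Theta_\mu(\varphi_N)\}$ when the functional \eqref{eq:THETA} is evaluated with $(V,W)$ replaced by $(2V,2W)$, and similarly with $\varphi_N$ replaced by $\varphi_N^\varepsilon$. The constraint \eqref{eq:lambda-cond} has been tuned precisely so that $2V$ still satisfies \eqref{e:cond-V} with admissible parameter $2\lambda < \Cr{c:lambda_1}$; Proposition~\ref{L:uniformint} then applies and gives, via \eqref{eq:uniformint1} and \eqref{eq:uniformint1'}, uniform bounds on $\|e^{:\xi_N:}\|_{L^2(\mu)}^2$ (for $N \geq 1$) and on $\|e^{:\xi_N^\varepsilon:}\|_{L^2(\mu)}^2$ (for $N \geq \Cr{c:tightness}(\varepsilon)$), with constants independent of $\varepsilon$.

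Inserting the two controls into the Cauchy-Schwarz bound and choosing $c(\varepsilon) = \Cr{c:L2}(\varepsilon,L) \vee \Cr{c:tightness}(\varepsilon)$ will then yield the claim. I do not anticipate any serious obstacle; the only point requiring care is the factor $\tfrac12$ in \eqref{eq:lambda-cond}, whose presence is exactly what ensures that the doubled potential $2V$ remains in the regime where Proposition~\ref{L:uniformint} delivers bounds.
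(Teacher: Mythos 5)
Your proposal is correct and rests on exactly the same two ingredients, used in the same roles, as the paper's own proof: Proposition~\ref{P:tightness_phi2.2} (with the same quadratic/linear splitting of $:\xi_N:-:\xi_N^{\varepsilon}:$, the linear part handled via \eqref{Ptight2.1-lin} with $W$) for the uniform $L^2$-smallness, and Proposition~\ref{L:uniformint} applied to the doubled data $(2V,2W)$, which is precisely what the factor $\tfrac12$ in \eqref{eq:lambda-cond} is designed to permit. The only difference is in the elementary glue — the paper splits on the event $\{|:\xi_N:-:\xi_N^{\varepsilon}:|>\eta\}$, uses Chebyshev and then optimizes over $\eta$, whereas you use the pointwise bound $|e^a-e^b|\le |a-b|\,(e^a+e^b)$ followed by Cauchy--Schwarz, which assembles the same estimates slightly more directly; just make sure your $c(\varepsilon)$ also dominates the constant from Proposition~\ref{P:tightness_phi2.2} associated with the support radius $L'$ of $W$, as in the paper.
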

\begin{proof}
For $0< \eta \leq 1 $ to be chosen shortly and $\xi_N$ as in \eqref{eq:xi_N}, consider the event
\begin{equation*}
A_{N}(\eta)= \big\{  \left\vert :\xi_N:-  :\xi_N^{\varepsilon}:  \right\vert > \eta \big\}.
\end{equation*}
Looking at $E_{\mu}[e^{:\xi_N:}-e^{:\xi_N^{\varepsilon}:}]$, distinguishing whether $A_{N}(\eta)$ occurs or not, applying Cauchy-Schwarz in the former case while using in the latter case the elementary estimate $|e^x -e^y| \leq ce^x \eta$ valid for all $x,y \in \R$ with $|x-y| \leq \eta (\leq 1)$, one finds that for all $\varepsilon > 0$, $N \geq 1$ and $0< \eta\leq 1$,
\begin{equation}
\label{eq:scalinglimit17}
\big| E_{\mu}[e^{:\xi_N:}] - E_{\mu}[e^{:\xi_N^{\varepsilon}:}] \big| \leq c\eta E_{\mu}[e^{:\xi_N:}] + \Big( E_{\mu}[e^{:2\xi_N:}]^{1/2} + E_{\mu}[e^{:2\xi_N^{\varepsilon}:}]^{1/2} \Big) P_{\mu}[A_{N}(\eta)].
\end{equation}
Now, recalling $L$ from condition \eqref{e:cond-V}, choosing $L'$ large enough so that $\text{supp}(W) \subset B_{L'}$ and letting $c(\varepsilon)= \Cr{c:tightness}(\varepsilon) \vee \Cr{c:L2} (\varepsilon,L) \vee \Cr{c:L2} (\varepsilon,L')$ in \eqref{eq:scalinglimit13} (cf.~Prop.~\ref{L:uniformint} regarding $\Cr{c:tightness}$ and~Prop.~\ref{P:uniformintGFF} regarding $\Cr{c:L2} $), applying \eqref{eq:uniformint1}, \eqref{eq:uniformint1'} (cf.~also \eqref{eq:lambda-cond} for the relevant choice of $\lambda$) and using Chebyshev's inequality, one obtains from \eqref{eq:scalinglimit17} that for all $\varepsilon > 0$ and $0< \eta\leq 1$,
\begin{equation}
\label{eq:scalinglimit18}
\gamma(\varepsilon) \leq c'\eta + c'' \eta^{-2} \sup_{N \geq c(\varepsilon)} \Vert  :\xi_N:-  :\xi_N^{\varepsilon}: \Vert_{L^2(\mu)}.
\end{equation}
Picking $\eta \equiv \eta(\varepsilon)= 1\wedge \sup_{N \geq c(\varepsilon)} \Vert :\xi_N:-  :\xi_N^{\varepsilon}:  \Vert_{L^2(\mu)}^{1/3}$ and applying the bounds \eqref{Ptight2.1}-\eqref{Ptight2.1-lin} from Proposition~\ref{P:tightness_phi2.2}, which is in force by choice of $c(\varepsilon)$, one finds that $\eta(\varepsilon) \to 0$ as $\varepsilon \to 0$ and with \eqref{eq:scalinglimit18} that $\gamma(\varepsilon) \leq c \eta(\varepsilon) $. Thus,~\eqref{eq:scalinglimit13} follows.
\end{proof}

The second claim identifies the limit for the functionals of the smooth approximation at fixed cut-off $\varepsilon>0$, which is not specific to $d=3$ since $\varepsilon$ is fixed. The convergence essentially follows from tightness and Proposition~\ref{P:conv-eps}. Let $\xi^{\varepsilon}$ refer to the quantity in \eqref{eq:xi_N} when $\varphi_N$ is replaced by $\Psi^{\varepsilon}$, cf.~\eqref{eq:Psi-epsilon}. The following is tailored to our purposes.

\begin{lem}[$d\geq 3$]\label{l2}
For all $\varepsilon \in (0,1)$,
 \begin{align}
\label{eq:scalinglimit14} \lim_N E_{\mu}[e^{:\xi_N^{\varepsilon}:}] = E^{\Sigma}[e^{:\xi^{\varepsilon}:}].
 \end{align}
\end{lem}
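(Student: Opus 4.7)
The plan is to combine the weak convergence $\varphi_N^{\varepsilon} \stackrel{d}{\to} \Psi^{\varepsilon}$ of Proposition~\ref{P:conv-eps} with the uniform exponential moment estimate~\eqref{eq:uniformint1'} from Proposition~\ref{L:uniformint}, by means of a continuous mapping argument followed by uniform integrability.

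First I would show that $\xi_N^{\varepsilon} \stackrel{d}{\to} \xi^{\varepsilon}$ as $N\to\infty$. Since $V,W \in C_0^{\infty}(\R^d)$, the functional
$$\Phi_{V,W}: C(\R^d,\R)\to\R,\quad \chi \mapsto \tfrac12\int V(z)\chi(z)^2\,dz + \int W(z)\chi(z)\,dz,$$
is continuous with respect to the topology of uniform convergence on compact sets: if $\chi_n\to\chi$ in this sense, the $\chi_n$ are uniformly bounded on $K:=\supp(V)\cup\supp(W)$ and $\chi_n^2\to\chi^2$ uniformly on $K$, so both integrals pass to the limit. Proposition~\ref{P:conv-eps} delivers convergence in law on $C(\R^d,\R)$, and the continuous mapping theorem then gives $\xi_N^{\varepsilon}=\Phi_{V,W}(\varphi_N^{\varepsilon}) \stackrel{d}{\to} \Phi_{V,W}(\Psi^{\varepsilon})=\xi^{\varepsilon}$.

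Next I would show that $E_{\mu}[\xi_N^{\varepsilon}]\to E^{\Sigma}[\xi^{\varepsilon}]$, so that the centered quantities $:\xi_N^{\varepsilon}:\,=\,\xi_N^{\varepsilon}-E_{\mu}[\xi_N^{\varepsilon}]$ converge in law to $:\xi^{\varepsilon}:$. Since $\varphi_N^{\varepsilon}$ is centered under $\mu$, $E_{\mu}[\xi_N^{\varepsilon}]=\tfrac12\int V(z)E_{\mu}[\varphi_N^{\varepsilon}(z)^2]\,dz$. The integrand converges pointwise in $z$: the finite-dimensional marginal convergence established in the proof of Proposition~\ref{P:conv-eps} yields convergence in law of $\varphi_N^{\varepsilon}(z)$ to $\Psi^{\varepsilon}(z)$, upgraded to convergence of second moments via the uniform higher-moment bound of Lemma~\ref{L:corBL2}. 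Moreover the same reasoning producing~\eqref{eq:scalinglimit25} (i.e.~\eqref{eq:BLcor1} combined with Lemma~\ref{L:g_eps1}) gives a uniform bound $\sup_{N\ge c\varepsilon^{-1}}E_{\mu}[\varphi_N^{\varepsilon}(z)^2]\le c(\varepsilon)$ independent of $z$, so dominated convergence on $\supp(V)$ closes the argument.

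Finally, to convert this convergence in law into the required convergence of exponential moments, I would establish uniform integrability of the family $\{e^{:\xi_N^{\varepsilon}:}\}_N$. The slack built into~\eqref{eq:lambda-cond} allows one to pick $\delta>0$ such that the rescaled data $((1+\delta)V,(1+\delta)W)$ still satisfies the hypotheses of Proposition~\ref{L:uniformint}; applying~\eqref{eq:uniformint1'} to this data and using linearity of the centering yields
$$\sup_{N\ge c(\varepsilon)}E_{\mu}\big[e^{(1+\delta):\xi_N^{\varepsilon}:}\big]<\infty.$$
This uniform $L^{1+\delta}$ bound gives the needed uniform integrability; combined with $:\xi_N^{\varepsilon}:\stackrel{d}{\to}\,:\xi^{\varepsilon}:$ it yields~\eqref{eq:scalinglimit14}. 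The main technical point is this last uniform integrability step, for which the slack in~\eqref{eq:lambda-cond} relative to the threshold $\Cr{c:lambda_1}$ of Proposition~\ref{L:uniformint} is essential.
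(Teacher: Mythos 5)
Your proposal is correct and follows essentially the same route as the paper: both proofs rest on the weak convergence of Proposition~\ref{P:conv-eps} together with the uniform exponential bound \eqref{eq:uniformint1'} of Proposition~\ref{L:uniformint}. The only cosmetic difference is that the paper de-truncates $e^{:\xi_N^\varepsilon:}\wedge M$ via the Cauchy--Schwarz bound $E_\mu[e^{:\xi_N^\varepsilon:}1\{:\xi_N^\varepsilon:>M\}]^2\le E_\mu[e^{:2\xi_N^\varepsilon:}]P_\mu[:\xi_N^\varepsilon:>M]$, whereas you argue uniform integrability directly from an $L^{1+\delta}$ bound using the slack in \eqref{eq:lambda-cond} --- both rely on precisely the same ingredient.
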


\begin{proof} With $L, L'$ such that $\text{supp}(V) \subset B_L$, $\text{supp}(W) \subset B_{L'}$, let $K= B_{L\vee L'}\subset \R^d$. Using \eqref{eq:Psi-epsilon-1} and \eqref{eq:uniformint1'} with $\lambda =0$, one readily deduces that
\begin{equation}\label{eq:var-conv}
E_{\mu}[\xi_{N}^{\varepsilon}] \to E^{\Sigma}[\xi^{\varepsilon}], \text{ as } N \to \infty.
\end{equation}
 Then, as we now explain, using a similar argument involving \eqref{eq:Psi-epsilon-1} together with \eqref{eq:var-conv}, one further obtains, for all $M \geq 1$,
$$
E_{\mu}[e^{:\xi_{N}^{\varepsilon}:}\wedge M] \to E^{\Sigma}[e^{:\xi^{\varepsilon}:}\wedge M] \text{ as } N \to \infty;
$$
to see this, one first bounds the normalization $e^{-E_{\mu}[\xi_{N}^{\varepsilon}]}$ from above and below by $e^{-E^{\Sigma}[\xi^{\varepsilon}](1\mp \varepsilon)}$ for $N \geq C(\varepsilon)$ using \eqref{eq:var-conv}. Then one lets first $N \to \infty$ while applying \eqref{eq:Psi-epsilon-1}, observing to this effect that $a \cdot e^{\xi_{N}^{\varepsilon}} \wedge M$ is a bounded continuous function of $(\varphi_{N}^{\varepsilon}(z): z \in K)$ for all $a>0$ in view of \eqref{eq:xi_N}, and lastly one sends $\varepsilon \downarrow 0$.
To conclude \eqref{eq:scalinglimit14}, one bounds
$$
E_{\mu}[e^{:\xi_{N}^{\varepsilon}:}1\{ :\xi_{N}^{\varepsilon}: > M\}]^2 \leq  E_{\mu}[e^{:2\xi_{N}^{\varepsilon}:}] P_{\mu}[ :\xi_{N}^{\varepsilon}: > M]
$$
and notices upon letting $M \to \infty$ that the first term on the right hand side is bounded uniformly in $N$ by means of \eqref{eq:uniformint1'} (cf.~also \eqref{eq:lambda-cond}), and the latter further yields that $\lim_M \sup_{N \geq c(\varepsilon)} P_{\mu}[ :\xi_{N}^{\varepsilon}: \, > M] = 0$. This completes the proof  of \eqref{eq:scalinglimit14}.
\end{proof}

Finally, the third item yields that the right-hand side of \eqref{eq:scalinglimit14} converges towards the desired limit as the cut-off $\varepsilon$ is removed. Recalling $\Psi$ from \eqref{eq:PSigma} and  $:\Psi^2:$ from \eqref{eq:GFFcont_wick1}, let 
\begin{equation} \label{eq:xi-ren}
:\xi: \ = \frac12 \langle :\Psi^2: , V \rangle + \langle \Psi , W \rangle \ \big(\in L^2(P^\Sigma)\big). 
\end{equation}
\begin{lem}[$d=3$]\label{l3}
\begin{align}
 &\label{eq:scalinglimit15} \lim_{\varepsilon \downarrow 0} E^{\Sigma}[e^{:\xi^{\varepsilon}:}]= E^{\Sigma}[e^{:\xi:}].
 \end{align}
 \end{lem}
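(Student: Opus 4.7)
The plan is to upgrade an $L^2(P^\Sigma)$-convergence $:\xi^\varepsilon: \to :\xi:$ as $\varepsilon \downarrow 0$ to convergence of the corresponding exponential expectations, via a uniform exponential integrability bound, in the spirit of the argument used to prove Lemma~\ref{l1}.

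The first step is to show that $\lim_{\varepsilon \downarrow 0} \big\Vert :\xi^\varepsilon: - :\xi:\big\Vert_{L^2(P^\Sigma)} = 0$. The quadratic part $\frac12 \int V(z) :\Psi^\varepsilon(z)^2: \, dz$ converges to $\frac12 \langle :\Psi^2:, V\rangle$ in $L^2(P^\Sigma)$ directly from the definition~\eqref{eq:GFFcont_wick1} of the second Wick power of $\Psi$ (and the fact that the limit there is independent of the chosen smoothing function $\rho$). The linear part $\int W(z) \Psi^\varepsilon(z)\, dz$ equals $\langle \Psi, \rho^\varepsilon * W\rangle$, and since $\rho^\varepsilon * W \to W$ in $C_0^\infty(\R^3)$ and $\Psi$ is a centered Gaussian field with covariance form $E_\Sigma(\cdot,\cdot)$ given by~\eqref{eq:formGFFcont}, $L^2(P^\Sigma)$-convergence to $\langle \Psi, W\rangle$ follows from $E_\Sigma(\rho^\varepsilon * W - W, \rho^\varepsilon * W - W) \to 0$, itself a consequence of the local integrability of $G_\Sigma$ in $\R^3$.

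The main obstacle is the second step: establishing a uniform exponential moment bound of the form
$$\sup_{\varepsilon \in (0,1)} E^\Sigma\big[ e^{2 :\xi^\varepsilon:} \big] \leq C < \infty,$$
which is the continuum (and Gaussian) analogue of the bound implicit in Proposition~\ref{P:uniformintGFF}. I would prove this by the same interpolation scheme as in~\eqref{eq:uniformint3}-\eqref{eq:uniformint31}, now applied directly under $P^\Sigma$: Wick ordering kills the first derivative at $t=0$ of $t \mapsto \log E^\Sigma[e^{2t :\xi^\varepsilon:}]$, and the second derivative is the variance of $2:\xi^\varepsilon:$ under a $t$-tilted Gaussian measure. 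The latter admits a closed-form Gaussian expression controlled by $\iint V(z) G^\varepsilon_\Sigma(z,z')^2 V(z') \, dz \, dz'$ plus lower-order linear terms, where $G^\varepsilon_\Sigma(z,z') = \iint \rho^\varepsilon(z-v) G_\Sigma(v,w) \rho^\varepsilon(z'-w) dv dw$; the required uniform bound is the continuum analogue of Lemma~\ref{L:tight10} and uses crucially that $d=3$ (so that $G_\Sigma$ is locally square-integrable).

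With the two preceding steps in hand, \eqref{eq:scalinglimit15} follows by a Chebyshev/uniform integrability argument identical in spirit to \eqref{eq:scalinglimit17}-\eqref{eq:scalinglimit18}: for $\eta \in (0,1]$, splitting $E^\Sigma\big[e^{:\xi^\varepsilon:} - e^{:\xi:}\big]$ according to the event $A_\varepsilon(\eta) = \{|:\xi^\varepsilon: - :\xi:| > \eta\}$, applying Cauchy--Schwarz on $A_\varepsilon(\eta)$ together with Chebyshev and the bound of the second step, and using $|e^x - e^y| \leq c e^{x \vee y} \eta$ for $|x-y| \leq \eta$ on its complement, yields
$$\big| E^\Sigma[e^{:\xi^\varepsilon:}] - E^\Sigma[e^{:\xi:}] \big| \leq c \eta + c' \eta^{-1} \big\Vert :\xi^\varepsilon: - :\xi:\big\Vert_{L^2(P^\Sigma)}.$$
Choosing $\eta = \eta(\varepsilon) = \big\Vert :\xi^\varepsilon: - :\xi:\big\Vert_{L^2(P^\Sigma)}^{1/2} \wedge 1$ and letting $\varepsilon \downarrow 0$ gives~\eqref{eq:scalinglimit15} by the first step.
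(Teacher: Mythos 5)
Your proposal is correct and its overall skeleton (an $L^2(P^\Sigma)$-convergence $:\xi^\varepsilon: \to \, :\xi:$, a uniform exponential moment bound, and the same splitting/Chebyshev argument as in \eqref{eq:scalinglimit17}--\eqref{eq:scalinglimit18}) coincides with the paper's; the genuine difference lies in how the uniform exponential integrability is obtained. The paper does not redo any interpolation: it discretizes $\Psi^\varepsilon$ (Lemma~\ref{L:Gauss}) to exhibit $:\xi^\varepsilon:$, and hence $:\xi:$ via \eqref{eq:GFFcont_wick1}, as elements of the closed second Wiener chaos $\overline{\mathcal{P}}_2(H)$, and then invokes the general result of Janson (\cite{MR1474726}, Thm.~6.7) to get uniform boundedness of $E^{\Sigma}[e^{:\chi:}]$ over $\chi \in \{\xi\} \cup \{\xi^\varepsilon : \varepsilon \in (0,1)\}$, after which the same comparison argument concludes. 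You instead re-derive the bound $\sup_\varepsilon E^\Sigma[e^{2:\xi^\varepsilon:}]<\infty$ by hand, interpolating the log-Laplace transform and computing Gaussian variances under $t$-tilted measures, i.e.~a continuum analogue of Proposition~\ref{P:uniformintGFF} and Lemma~\ref{L:tight10} (using $G^\varepsilon_\Sigma(z,z') \leq c\,(\varepsilon \vee |z-z'|)^{-1}$ and local square integrability of $G_\Sigma$ in $d=3$). This is viable and more self-contained, but it costs you the extra work the abstract route avoids: you must justify, uniformly in $\varepsilon$ and for $\lambda$ small, that the tilted continuum Gaussian measures exist and that differentiation under the expectation is legitimate, and you must also secure $E^\Sigma[e^{2:\xi:}]<\infty$ itself, which your step two does not state but which is needed for the Cauchy--Schwarz step (it follows, e.g., by Fatou along an a.s.~convergent subsequence from your uniform bound on the $\xi^\varepsilon$). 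The paper's route buys brevity and sidesteps continuum tilts entirely at the price of citing chaos machinery; yours buys explicitness and keeps the argument parallel to the discrete tightness proof.
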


Lemma~\ref{l3} is a purely Gaussian claim. Its proof is given in Appendix~\ref{A:Gauss}. Equipped with Lemmas~\ref{l1}-\ref{l3}, we can give the short:

\begin{proof}[Proof of Theorem~\ref{T:limit_2}] We will show that for any $ V,W \in C_{0}^{\infty}(\R^3)$ with 
with $V$ as in \eqref{e:cond-V} and  $\lambda$ satisfying \eqref{eq:lambda-cond},
\begin{equation}
\label{eq:scalinglimit11}
:\xi_N: \, (= \xi_N - E_{\mu}[\xi_N])\text{ converges in law to } 
:\xi: \text{ as $N \to \infty$},
\end{equation}
which implies \eqref{eq:scalinglimit1}. As we now explain, on account Proposition~\ref{L:uniformint}, in order to obtain \eqref{eq:scalinglimit11} it is enough to show that for any such $V,W$ (cf.~\eqref{eq:xi_N}),
\begin{equation}
\label{eq:scalinglimit12}
\lim_N E_{\mu}[e^{:\xi_N:}]=  E^{\Sigma}[e^{:\xi:}].
\end{equation}
Indeed, $E_{\mu}[e^{:\xi_N:}]= \Theta_{\mu}(\varphi_N)$ in the notation \eqref{eq:THETA} and so by \eqref{eq:uniformint1} (see also Remark~\ref{R:unif-int},(1)) the sequence $:\xi_N:$, $N \geq 1$, is tight and \eqref{eq:scalinglimit12} implies that any subsequential limit has the same law as $:\xi:$. The claim~\eqref{eq:scalinglimit11} then follows e.g.~by the corollary below Theorem 5.1 in \cite{MR1700749}, p.59.

It remains to argue that \eqref{eq:scalinglimit12} holds, which follows by combining Lemmas~\ref{l1}, \ref{l2} and \ref{l3}. 
 Let $\varepsilon \in (0,1)$. With $\gamma'(\varepsilon)=|E^{\Sigma}[e^{:\xi^{\varepsilon}:}]-E^{\Sigma}[e^{:\xi:}]|$, one has for arbitrary $\varepsilon \in (0,1)$ that
 $|E_{\mu}[e^{:\xi_N:}]-  E^{\Sigma}[e^{:\xi:}]|$ is bounded for all $N \geq c(\varepsilon)$ by 
 $$\big|E_{\mu}[e^{:\xi_N^{\varepsilon}:}]-  E^{\Sigma}[e^{:\xi^{\varepsilon}:}]\big|+ \gamma(\varepsilon) + \gamma'(\varepsilon).$$
Picking $N \geq c' (\varepsilon)$, one further ensures by means of \eqref{eq:scalinglimit14} that the first term is, say, at most $\varepsilon$, yielding overall a bound on $|E_{\mu}[e^{:\xi_N:}]-  E^{\Sigma}[e^{:\xi:}]|$ valid for all $N \geq c'(\varepsilon)$ which is $o_{\varepsilon}(1)$ as $\varepsilon \downarrow 0$ on account of \eqref{eq:scalinglimit13} and \eqref{eq:scalinglimit15}. Thus, \eqref{eq:scalinglimit12} follows.
\end{proof}

\subsection{Scaling limit of occupation times and isomorphism theorem}\label{sec:sl_isom}

We now return to Theorem~\ref{T:isom1}, with the aim of identifying the limiting behavior of the identity \eqref{eq4isom5new}. As a consequence of Theorem~\ref{T:limit_2}, we first deduce the existence of a limit for the occupation times $\mathcal{L}$ appearing in \eqref{eq4isom5new} under appropriate rescaling.  
With $\mathcal{L}= ( \mathcal{L}_{x})_{x\in \Z^d}$ as defined in \eqref{eq4isom4} and for $N \geq1$, we consider 
\begin{equation}
\label{eq:scaledL0}
\mathcal{L}_N(z)= N^{d-2} \mathcal{L}_{\lfloor Nz \rfloor}, \quad z \in \R^d
\end{equation}
and the associated random distribution, with values in $\mathcal{S}'(\R^d)$, defined by
\begin{equation}
\label{eq:scaledL}
\langle \mathcal{L}_N , V \rangle =  \int \mathcal{L}_N(z) V(z)  dz, \quad V \in \mathcal{S}(\R^d) .
\end{equation}
We now introduce what will turn out to be the relevant continuous object. For $u>0$, we consider on a suitable space $(\widehat{\Omega}, \widehat{\mathcal{F}}, \widetilde{P}^{\Sigma}_{u})$ the $\mathcal{S}'(\R^d)$-valued random variable $\widetilde{\mathcal{L}}$, which is the occupation time measure at level $u>0$ of a Brownian interlacement with diffusivity matrix $\Sigma$. That is, one introduces under $(\widehat{\Omega}, \widehat{\mathcal{F}}, \widetilde{P}^{\Sigma}_{u})$ a Poisson point process $\widehat{\omega}$ on the space $\widehat{W}^*$ of bi-infinite $\R^d$-valued trajectories modulo time-shift with ($\sigma$-finite) intensity measure 
\begin{equation}
\label{eq:nu_sigma}
u( \Phi^{\Sigma} \circ  \nu ),
\end{equation}
where $\nu$ refers to the measure constructed in Theorem 2.2 of \cite{Sz13} and
$$
\Phi^{\Sigma}: \widehat{W}^* \rightarrow \widehat{W}^*, \quad \widehat w^* = [\widehat w] \mapsto \Phi^{\Sigma} (\widehat w^*) = [\{ \Sigma^{-1/2}\widehat w(t) : t\in \R  \}]
$$
(i.e.~$\widehat w$ is a representant in the equivalence class $\widehat w^*$). The process $\widehat{\omega}$ induces the occupation-time measure  $\widetilde{\mathcal{L}}=\widetilde{\mathcal{L}}(\widehat{\omega})$ with 
\begin{equation}
\label{oc4}
\begin{split}
\langle \widetilde{\mathcal{L}}(\widehat{\omega}),V\rangle &= \sum_{i} \int_{-\infty}^{\infty}V(\widehat w_i(s)) ds,  \text{  for any $V \in \mathcal{S}(\R^d) $, if } \widehat{\omega} = \sum_{i} \delta_{[\widehat{w}_i]}.
\end{split}
\end{equation}
A formula for Laplace functionals of the random measure $\widetilde{\mathcal{L}}$ is given in Prop.~2.6 of  \cite{Sz13}. We derive here a somewhat different identity which is more suitable to our purposes, cf.~in particular
\eqref{eq:occ103} below. Recall that $(-\Delta_\Sigma-V)$ is invertible whenever $V$ satisfies \eqref{e:cond-V} with $\lambda< \Cr{c:scalinglimit1}$.

\begin{lem}[$d \geq 3$, $V$ as in \eqref{e:cond-V}, $\lambda< \Cr{c:scalinglimit1}$]
\begin{equation}
\label{oc5}
\widetilde{\mathbb E}^{\Sigma}_{u}\big[ \exp \big\{ \langle \widetilde{\mathcal{L}}, V \rangle\big\} \big] = \exp \big\{ u  \big\langle V, 1 + G_{\Sigma}^V V  \big\rangle \big\}.
\end{equation}
\end{lem}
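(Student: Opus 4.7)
The plan is to follow the template of the proof of Lemma~\ref{L:eq4isom10}, adapted to the continuous setting of Brownian interlacements with diffusivity $\Sigma$. The starting point is the exponential formula for Poisson point processes: with $f_V(\widehat w^*) = \int_\R V(\widehat w(s))\,ds$ (well-defined modulo time-shift),
\[
\widetilde{\mathbb{E}}^\Sigma_u\big[\exp\{\langle \widetilde{\mathcal{L}}, V\rangle\}\big] = \exp\big\{u\,(\Phi^\Sigma \circ \nu)(e^{f_V}-1)\big\},
\]
so the entire task reduces to evaluating $(\Phi^\Sigma \circ \nu)(e^{f_V}-1)$.

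Setting $K = \supp(V) \subset B_L$ and restricting attention to $\widehat W_K^*$ (off which $f_V$ vanishes), I would appeal to the continuous analog of Theorem~\ref{T:nu}, provided by Theorem~2.2 of \cite{Sz13} and transported by $\Phi^\Sigma$, to describe this restriction: it is supported on bi-infinite trajectories entering $K$ at time $0$ for the first time, with entrance point $z$ distributed according to the equilibrium measure $e_{K,\Sigma}$ of Brownian motion with diffusivity $\Sigma$; the forward segment then evolves as such a Brownian motion from $z$, while the backward segment is (conditionally) independent and conditioned not to return to $K$. Since $\supp(V) \subset K$, the backward piece contributes nothing to $f_V$, and a standard Feynman--Kac computation (the continuous analog of \eqref{eq4isom13}) gives
\[
E_z^\Sigma\big[e^{\int_0^\infty V(B_s)\,ds}\big] - 1 = G_\Sigma^V V(z),
\]
with the right-hand side well-defined on $C_0^\infty(\R^d)$ under \eqref{e:cond-V} with $\lambda<\Cr{c:scalinglimit1}$ by virtue of \eqref{e:G^V}. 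Hence
\[
(\Phi^\Sigma \circ \nu)(e^{f_V}-1) = \langle e_{K,\Sigma}, G_\Sigma^V V\rangle.
\]

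The closing step will be an integration by parts. I write $e_{K,\Sigma} = -\tfrac12\Delta_\Sigma h_{K,\Sigma}$ (the continuous counterpart of \eqref{eq2isom13}), where $h_{K,\Sigma}(z) = W_z^\Sigma[H_K<\infty]$ denotes the hitting probability of $K$, which equals $1$ on $K$. Combining the symmetry of $\Delta_\Sigma$ with the identity $-\tfrac12\Delta_\Sigma G_\Sigma^V = I + V G_\Sigma^V$, immediate from the definition of $G_\Sigma^V$, then yields
\[
\langle e_{K,\Sigma}, G_\Sigma^V V\rangle = \langle h_{K,\Sigma}, V + V G_\Sigma^V V\rangle = \langle V, 1\rangle + \langle V, G_\Sigma^V V\rangle = \langle V, 1 + G_\Sigma^V V\rangle,
\]
using $h_{K,\Sigma} V = V$ in the second equality (since $\supp(V) \subset K$). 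Substituting back into the exponential formula produces \eqref{oc5}.

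The main technical subtlety will be providing rigorous justification for the continuous analogs of the potential-theoretic constructions of Section~\ref{sec:PT}, notably the equilibrium-measure description of $\nu$ near a compact set $K$ and the backward-escape property making the backward segment invisible to $f_V$. These are standard in the Brownian interlacement framework of~\cite{Sz13} and considerably simpler than their $\nabla\varphi$-analogs developed earlier: in the homogenized Gaussian setting there is no environment to track and $\overline X$ reduces to a time-changed Brownian motion of diffusivity~$\Sigma$, so the arguments of Section~\ref{sec:PT} specialize transparently.
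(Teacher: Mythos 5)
Your proposal is correct and follows essentially the same route as the paper: the Poisson exponential formula, the equilibrium-measure description of the Brownian interlacement intensity from \cite{Sz13} (with the backward part invisible to $V$), the Feynman--Kac identity $E^\Sigma_x[e^{\int_0^\infty V(B_s)ds}]-1=G^V_\Sigma V(x)$, and a final integration by parts against the equilibrium potential. The only (immaterial) difference is in the last algebraic step: you pair $e_K=-\tfrac12\Delta_\Sigma h_K$ directly with $G^V_\Sigma V$ and use $-\tfrac12\Delta_\Sigma G^V_\Sigma=I+VG^V_\Sigma$, whereas the paper splits $\langle e_K, G^V_\Sigma V\rangle=\langle e_K,G_\Sigma V\rangle+\langle e_K,(G^V_\Sigma-G_\Sigma)V\rangle$ and uses $G_\Sigma e_K=1$ on $K$ — the same computation in a slightly different order.
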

\begin{proof} Applying the analogue of \eqref{eq4isom3.bis} for the Poisson measure $ \widetilde{P}^{\Sigma}_{u}$, one finds using \eqref{eq:nu_sigma} and \eqref{oc4} along with the explicit formula for the intensity measure $\nu$ in \cite[(2.3) and (2.7)]{Sz13}, that 
\begin{align*}
\widetilde{\mathbb E}^{\Sigma}_{u}\big[ \exp \big\{ \langle \widetilde{\mathcal{L}}, V \rangle\big\} \big] &= \exp\Big\{ u \int de_K(x) E^{\Sigma}_x\Big[ e^{\int_0^{\infty} V(X_s) ds} -1 \Big]\Big\}\\
&= \exp\Big\{ u \int de_K(x) \int_0^{\infty} dt E^{\Sigma}_x\Big[ e^{\int_0^{t} V(X_s) ds} V(X_t) \Big]\Big\} = \exp\big\{ u \langle e_K, G_{\Sigma}^V V \rangle \},
\end{align*}
where $K\supset \text{supp}(V)$ is a closed ball of suitable radius (recall  $\text{supp}(V)$ is compact), $E^{\Sigma}_x$ denotes expectation for Brownian motion on $\R^d$ with diffusivity $\Sigma$ (cf.~\eqref{eq:IP}) started at $x \in \R^d$, $e_K(\cdot)$ denotes its equilibrium measure on $K$ (see e.g.~in \cite[Prop.~3.3]{Sz98} for its definition in the present context) and $G_{\Sigma}^V=(-\Delta_\Sigma-V)^{-1} $, cf.~\eqref{e:G^V}. As $G_{\Sigma}e_K=1$ on $K=\text{supp}(V)$ where $G_{\Sigma}= G_{\Sigma}^0$, one has $\langle V,1 \rangle = \langle e_K, G_{\Sigma} V\rangle$ and \eqref{oc5} follows upon noticing that (omitting superscripts $\Sigma$)
\begin{multline*}
\langle e_K, (G^V- G) V \rangle \\ = \langle  G e_K, (-\Delta )(G^V- G) V \rangle = \langle  G e_K, (-\Delta G^V- 1) V \rangle =  \langle  G e_K, VG^VV \rangle =  \langle  V, G^VV \rangle.
\end{multline*}
\end{proof}

Now recall $\P^{V}_{u}$ from \eqref{eq4isom3.bis}. The following relates the fields $\mathcal{L}_N$ and $\widetilde{\mathcal{L}}$ in \eqref{eq:scaledL0} and \eqref{oc4}.

\begin{corollary}[$V \text{ as in } \eqref{e:cond-V}, \, \lambda < c$] 
\label{T:localtimes} 
$\quad$

\medskip
\noindent With $u_N=uN^{-(d-2)}$ and $V_N$ as in \eqref{eq:V_N}, one has for $d=3$ that
\begin{equation}
\label{eq:oc100}
\langle \mathcal{L}_N , V \rangle \text{ under $\P^{V_N}_{u_N}$ converges in law to } \langle \widetilde{\mathcal{L}}, V \rangle \text{ under $\widetilde{P}^{\Sigma}_{u}$ as $N \to \infty$}.
\end{equation}
\end{corollary}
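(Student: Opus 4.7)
My plan is to establish \eqref{eq:oc100} by proving convergence of the Laplace functional $\mathbb E^{V_N}_{u_N}[\exp\{\langle \mathcal L_N, V\rangle\}]$ to $\widetilde{\mathbb E}^\Sigma_u[\exp\{\langle \widetilde{\mathcal L}, V\rangle\}]$ for a rich enough family of test functions $V$ (say, those satisfying \eqref{e:cond-V} with suitably small $\lambda$), and then invoking L\'evy's continuity theorem. The starting point is the isomorphism \eqref{eq4isom5new}: applied to the tilts $V \to V_N$ and $u \to u_N = u N^{-(d-2)}$, and using the independence of $\mathcal L$ and $\varphi$ under the product law $\mathbb P^{V_N}_{u_N}\otimes \mu$ to factorize the left-hand side, it immediately yields
\begin{equation*}
\mathbb E^{V_N}_{u_N}\bigl[e^{\langle V_N, \mathcal L\rangle_{\ell^2}}\bigr] = \frac{E_\mu\bigl[e^{\frac12\langle V_N, (\varphi+\sqrt{2u_N})^2\rangle_{\ell^2}}\bigr]}{E_\mu\bigl[e^{\frac12\langle V_N, \varphi^2\rangle_{\ell^2}}\bigr]}.
\end{equation*}

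The second step is to translate both sides to their continuum counterparts. The choices in \eqref{eq:V_N}, \eqref{eqscalingtightnew1}, and the scaling $u_N = uN^{-(d-2)}$ are tuned so that $\langle V_N, \mathcal L\rangle_{\ell^2}$ coincides with $\langle \mathcal L_N, V\rangle$ up to negligible error, and so that, expanding the square, $\frac12\langle V_N, (\varphi+\sqrt{2u_N})^2\rangle_{\ell^2}$ takes the form $\tfrac{\alpha}{2}\langle \Phi_N^2, V\rangle + \sqrt{2u\beta}\langle \Phi_N, V\rangle + u\langle 1, V\rangle + o_N(1)$ for explicit constants $\alpha, \beta > 0$ dictated by the normalizations. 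Inserting the Wick centering $\langle :\Phi_N^2:, \cdot\rangle = \langle \Phi_N^2, \cdot\rangle - E_\mu[\cdots]$ in both numerator and denominator --- whose (divergent) mean cancels in the ratio --- reduces the quotient to the setting directly covered by \eqref{eq:scalinglimit2} of Theorem~\ref{T:limit_2}.

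I would then apply \eqref{eq:scalinglimit2} in the numerator (with quadratic test function $\alpha V$ and linear test function $\sqrt{2u\beta}V$) and in the denominator ($\alpha V$ and $W=0$), which is permitted once $\lambda$ in \eqref{e:cond-V} is chosen small enough for $\alpha V$ to still meet the hypotheses. The Wick-squared contribution $A^{\alpha V}_\Sigma(\alpha V, \alpha V)$ appears identically in numerator and denominator and cancels, leaving, after the algebraic bookkeeping that matches the constants,
\begin{equation*}
\lim_{N\to \infty}\mathbb E^{V_N}_{u_N}\bigl[e^{\langle \mathcal L_N, V\rangle}\bigr] = \exp\bigl\{u\langle 1, V\rangle + u\langle V, G^V_\Sigma V\rangle\bigr\},
\end{equation*}
which by \eqref{oc5} is precisely $\widetilde{\mathbb E}^\Sigma_u[\exp\{\langle \widetilde{\mathcal L}, V\rangle\}]$, from which the convergence in law follows by L\'evy's theorem.

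The main obstacle will be the scaling bookkeeping in the second paragraph: pinning down $\alpha,\beta$ precisely from \eqref{eq:V_N}--\eqref{eqscalingtightnew1} and verifying the final algebraic identity $\beta\langle V, G^{\alpha V}_\Sigma V\rangle = \langle V, G^V_\Sigma V\rangle$, which reflects the internal consistency of the normalizations together with the careful tuning of the drift $h \equiv V$ embedded in \eqref{eq4isom2} that guarantees the limit is exactly the Brownian interlacement process underlying \eqref{eq:nu_sigma}. The factorization that makes Theorem~\ref{T:limit_2} directly applicable --- namely, that the $V$-tilt of $\mathbb P^{V_N}_{u_N}$ and the drift $h=V$ combine so as to produce, on the right-hand side of the isomorphism, precisely a quadratic plus linear functional of $\varphi$ of the form handled by \eqref{eq:scalinglimit2} --- is the main conceptual point; once it is in hand no further input beyond \eqref{oc5} is needed.
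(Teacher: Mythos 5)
Your proposal follows essentially the same route as the paper's proof: apply the isomorphism \eqref{eq4isom5new} with $(V_N,u_N)$, rewrite the resulting ratio in terms of Wick-centered squares so the divergent means cancel, apply \eqref{eq:scalinglimit2} to the numerator (with $W=\sqrt{2u}V$) and the denominator (with $W=0$) so that the $A^V_\Sigma$-terms cancel, and identify the limit with \eqref{oc5}, using tightness/Laplace-transform convergence to conclude. The only difference is cosmetic: the scaling constants you call $\alpha,\beta$ are exactly $1$ by the normalizations in \eqref{eqscalingtightnew1}, \eqref{eq:V_N} and $u_N=uN^{-(d-2)}$, so the discrete-to-continuum identities (the paper's \eqref{eq:occ101}--\eqref{eq:occ102}) are exact and the final algebraic consistency check you anticipate is vacuous.
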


\begin{proof} With $V_N$ as above and using \eqref{eq4isom4} and \eqref{eq:scaledL0}-\eqref{eq:scaledL}, one readily checks that 
\begin{equation}
\label{eq:occ101}
\langle \mathcal{L}_N , V \rangle = \sum_x V_N(x) \mathcal{L}_x \equiv \langle  \mathcal{L}, V_N \rangle_{\ell^2}
\end{equation}
For integer $N \geq 1$, $u \geq 0$, let $\varphi_{N,u}(z) = \varphi_N(z) + \sqrt{2u}$ with $\varphi_N(z)$ as in \eqref{eqscalingtightnew1} and set
\begin{equation}
\begin{split}
\langle :\Phi_{N,u}^2:,V \rangle &\stackrel{\text{def.}}{=} \int_{\R^3} 
V(z):\varphi_{N,u}(z)^2: dz, \text{ for $ V \in C_{0}^{\infty}(\R^3)$.}
\end{split}
\end{equation}
so that $:\Phi_{N,0}^2:$ equals $:\Phi_{N}^2:$ in view of \eqref{eq:scalingtight1-nou}.
Similarly as in \eqref{eq:V_N-rewrite}, one has that for all $u \geq 0$, with $u_N$ as defined above \eqref{eq:oc100},
\begin{equation}
\label{eq:occ102}
\langle  \Phi_{N,u}^2 , V \rangle = \sum_x V_N(x)   (\varphi_{x}+\sqrt{2u_N})^2.
\end{equation}
Together, \eqref{eq:occ101}, \eqref{eq:occ102} and Theorem~\ref{T:isom1} then yield that for suitable $V$,
\begin{equation}
\label{eq:occ103}
\E^{V_N}_{u_N}\big[ e^{ \langle \mathcal{L}_N , V \rangle } \big] \stackrel{\eqref{eq4isom5new}}{=} \frac{E_{\mu}[ \exp \{ \frac12 \langle  \Phi_{N,u}^2 , V \rangle \} ]}{E_{\mu}[ \exp \{ \frac12 \langle  \Phi_{N,0}^2 , V \rangle \} ]}=
 \frac{E_{\mu}[ \exp \{ \frac12 \langle : \Phi_{N,u}^2 : , V \rangle  + u \langle 1 , V \rangle \} ]}{E_{\mu}[ \exp \{ \frac12 \langle  :\Phi_{N,0}^2 :, V \rangle \} ]},
\end{equation}
where the second equality follows using that $E_{\mu}[\langle  \Phi_{N,u}^2 , V \rangle ]= E_{\mu}[\langle  \Phi_{N,0}^2 , V \rangle ] + u \int V(z) dz$. By Jensen's inequality, which implies that $E_{\mu}[ \exp \{ \frac12 \langle  :\Phi_{N,0}^2 :, V \rangle \} ] \geq 1$, and on account of \eqref{eq:uniformint1}, it follows from \eqref{eq:occ103} that the family $\{ \langle \mathcal{L}_N , V \rangle : N \geq 1 \}$ is tight. Moreover, taking limits and applying formula~\eqref{eq:scalinglimit2} separately to numerator (with the choice $W= \sqrt{2u}V$) and denominator (with $W=0$) on the right-hand side of \eqref{eq:occ103}, the terms proportional to $\langle V , A^V_\Sigma V \rangle$ cancel and one obtains that
\begin{equation}
\label{eq:occ104}
\lim_N \E^{V_N}_{u_N}\big[ e^{ \langle \mathcal{L}_N , V \rangle } \big]\stackrel{\eqref{eq:scalinglimit2}}{=} \exp \big\{ E^V_{\Sigma} (W,W) \big\vert_{W=\sqrt{2u}V}  + \langle u, V \rangle  \big\} =  \exp \big\{  u \langle V , 1+ G^V_{\Sigma} V \rangle \big\} .
\end{equation}
On account of \eqref{oc5}, \eqref{eq:occ104} yields \eqref{eq:oc100}.
\end{proof}

\begin{remark}\label{R:generalizations}
\begin{enumerate}
\item As an immediate consequence of Theorems~\ref{T:isom1} and~\ref{T:limit_2} and Corollary~\ref{T:localtimes}, we recover the following isomorphism, derived in Corollary 5.3 of \cite{Sz13} (for $\Sigma=\text{Id}$), and obtain along with it an explicit formula for the relevant generating functionals. Let $:(\Psi + \sqrt{2u})^2:
$ be defined as $ :\Psi^2: + 2 \sqrt{2u} \Psi $ (under $P^{\Sigma}$), cf.~\eqref{eq:GFFcont_wick1}.

\textbf{Corollary 7.7 ($d=3$).} \textit{Under $P^{\Sigma} \otimes \widetilde{P}^{\Sigma}_{u}$,
\begin{equation}\label{e:isom-cont}
\textstyle \frac12 :\Psi^2: + \widetilde{\mathcal{L}} \  \stackrel{\textnormal{law}}{=} \ \frac12:(\Psi + \sqrt{2u})^2: .
\end{equation}
Moreover, for any $V$ as in \eqref{e:cond-V}, $\lambda < c$, 
\begin{equation}
\label{eq:generatingfunctionalGFFsquareu}
E^{\Sigma} \big[  \exp\{ \textstyle \frac12 \displaystyle \langle :(\Psi + \sqrt{2u})^2:,V \rangle\} \big]= \exp\big\{ \big\langle V, (\textstyle \frac12 A^V + uG^V) V \big\rangle \big\}
\end{equation}
with $G^V\equiv G_{\Sigma}^V$, $A^V \equiv A_{\Sigma}^V $ as in \eqref{e:G^V}, \eqref{e:A^V}.}
\begin{proof}
The isomorphism \eqref{e:isom-cont} follows from \eqref{eq:scalinglimit1}, \eqref{eq:oc100} and the identity \eqref{eq4isom5new} (see also \eqref{eq:occ104}). The formula \eqref{eq:generatingfunctionalGFFsquareu} is obtained from \eqref{eq:scalinglimit2} with the choice $W= \sqrt{2u}V$.
\end{proof}

\item Let $\mathcal{L}^0_N$ denote the occupation time measure defined as in \eqref{eq:scaledL0}-\eqref{eq:scaledL} but for the interlacement process with intensity measure $u\nu_{V=0,h=0}$. Let $\P_u^0$ denote its law. Then one can in fact show that for all $ d\geq 3$, with $u_N$ as in Theorem~\eqref{T:localtimes},
\begin{equation}
\label{eq:localtime_alld}
\mathcal{L}^0_N \text{ under }\P_{u_N}^0 \text{ converges to }  \widetilde{\mathcal{L}} \text{ under $\widetilde{P}^{\Sigma}_{u}$ as $N \to \infty$}
\end{equation}
(as random measures on $\R^d$). The limit \eqref{eq:localtime_alld} can be obtained by starting from the analogue of~\eqref{oc5} for  $\mathcal{L}^0_N$ by exploiting the invariance principle~\ref{eq:IP} directly and e.g.~the bounds of \cite{DD05} to deduce convergence to 
the right-hand side of \eqref{oc5}. We omit the details.

\item It is instructive to note that the proof of Theorem~\ref{T:limit_2} only relied on two `external' ingredients, Lemma~\ref{L:corBL} (a consequence of \eqref{eq2isom10.2}) and Theorem~\ref{T:NS}. Whereas the lower ellipticity seems difficult to get by, the upper ellipticity assumption in \eqref{eq2isom2} can be reduced. For instance, using the results of~\cite{zbMATH06865124,zbMATH07326629}, it follows that Theorem~\ref{T:limit_2} continues to hold if only
$$
c\leq V'' \text{ and } \E_{\mu}[V''(\partial \varphi(e))^p]< \infty, \text{ for all edges $e \in \{e_i, 1\leq i \leq d \}$ and large enough $p>1$.}
$$

\item
 It would be interesting to obtain an analogue of Theorem~\ref{T:limit_2} in finite volume, much in spirit like the extension by Miller \cite{zbMATH05988063} of the result of Naddaf-Spencer \cite{NaSp97}, cf.~Theorem~\ref{T:NS}. It would be equally valuable to seek such results for potentials with lower ellipticity, such as those appearing in~\cite{zbMATH06824408} and \cite{zbMATH06490922}. Suitable extensions of Brascamp-Lieb type concentration inequalities, such as those recently derived in \cite{10.1214/21-AOP1545}, may plausibly allow to extend the tightness and $L^2$-estimates in Propositions~\ref{L:uniformint} and~\ref{P:tightness_phi2.2} to setups without uniform convexity.
\end{enumerate}
\end{remark}

\appendix



\section{Heat kernel bounds with potential and scaling limits}\label{A:pot}

We collect here the proofs of Lemmas~\ref{L:RW_tilt} and~\ref{L:GFF-conv}, which concern estimates for the tilted kernel $q_t^V$ and the corresponding Green's function $g^V$ introduced in \eqref{eq:q_t} and \eqref{eq:srw-g}, along with scaling limits of the latter.

\begin{proof}[Proof of Lemma~\ref{L:RW_tilt}] By monotonicity, it is enough to consider the case $V=V_+$, which will be assumed from here on. 
We first explain how \eqref{eq:RW-exp1} implies \eqref{eq:RW-exp2}. For all $ t \geq 0$ and $x,y, \in \Z^d$, using the inequality $ab \leq \frac12(a^2 + b^2)$, applying time-reversal and the Markov property, one obtains \begin{multline}
\label{eq:RW-intermediate1}
E_x \big[  e^{\int_0^{t} 2V(Z_s) ds} 1_{\{X_t=y \}} \big]  \leq \frac12\Big( E_x \big[  e^{\int_0^{t/2} 4V(Z_s) ds} 1_{\{Z_t=y \}} \big] + E_x \big[  e^{\int_{t/2}^{t} 4V(Z_s) ds} 1_{\{Z_t=y \}} \big]  \Big)\\
\leq \sup_{z,z'} E_z \big[  e^{\int_0^{t/2} 4V(Z_s) ds} 1_{\{Z_t=z' \}} \big] 
=\sup_{z,z'} E_z \big[  e^{\int_0^{t/2} 4V(Z_s) ds} q_{\frac t2}(Z_{\frac t2},z') \big]. 
\end{multline} 
By a standard on-diagonal estimate, it follows from \eqref{eq:RW-intermediate1} that
 \begin{equation}\label{eq:RW-exp2'}
 q_t^{2V}(x,y ) \leq c(t\vee 1)^{-d/2}\sup_z E_z \big[  e^{\int_0^{\infty} 4V(Z_s) ds}\big] \leq c'(t\vee 1)^{-d/2},
\end{equation}
using \eqref{eq:RW-exp1} in the last step. To deduce \eqref{eq:RW-exp2}, one applies the Cauchy-Schwarz inequality and a well-known lower bound on $q_t$ to deduce, for all $t \geq 0$ and $x,y \in \Z^d$,
$$
q_t^V(x,y) \leq q_t^{2V}(x,y)^{1/2} q_t(x,y)^{1/2} \stackrel{\eqref{eq:RW-exp2'}}{\leq} c  (t\vee 1)^{-d/2} q_{t/2}(x,y)^{1/2} \leq c' q_{t/2}(x,y).
$$
We now show \eqref{eq:RW-exp1}. Let $r= \textnormal{diam}(\textnormal{supp}(V).$ By translation invariance, we may assume that $\text{supp}(V) \subset B_r= ([-r,r]\cap \mathbb{Z})^d$. Assume that \eqref{eq:Vcond} for some $\varepsilon > 0$ to be determined, which translates to $V \leq \frac{\varepsilon}{r^2}$. Then, with $T_{B} = \inf \{ t \geq 0 : Z_t \notin B\}$ denoting the exit time from $B\subset \Z^d$, for all $ x \in \Z^d$, one obtains
\begin{equation}\label{eq:tilt58}
\sup_{\alpha \geq 1} E_x \big[e^{\int_0^{T_{B_{\alpha r}}} 2V(Z_t) dt}\big] \leq \sup_{\alpha \geq 1} E_x \Big[e^{ 2\varepsilon \frac{T_{B_{\alpha r}}}{r^2}}\Big] \leq \Cl[c]{c:exit}
\end{equation}
whenever $\varepsilon \leq \frac{c}{\alpha^2}$ for some small enough $c \in (0,1)$, using that $\sup_{x,N \geq 1}E_x[e^{cN^{-2}T_{B_N}}] \leq \Cr{c:exit}$ in the last step. 

Now consider the sequence of successive return times to $B_r$ and departure times from $B_{\alpha r}$: i.e.,~$R_1=H_{B_r} = \inf\{ t \geq 0 : Z_t \in B_r\}$ and for each $k \geq 1$, define $D_k = T_{B_{\alpha r}}\circ \theta_{R_k} + R_k$ (with the convention that $D_k = \infty$ whenever $R_k= \infty$) and $R_{k+1}= R_1 \circ\theta_{D_k} + D_k$ (with a similar convention), where $\theta_s$, $s \geq 0$, denote the canonical shifts for $Z$. Moreover, let 
\begin{equation}
\label{eq:gamma-alpha} \gamma(\alpha) \stackrel{\text{def.}}{=} \sup_{y \in \Z^d \setminus B_{\alpha r}}P_y[R_1 < \infty].
\end{equation} 
By transience, one has the partition of unity $1= 1\{R_1 = \infty \}+ \sum_{k \geq 1} 1\{R_k < \infty = R_{k+1} \}$. Since $\text{supp}(V) \subset B_r$, no contribution to $\int_0^{\infty} 2V(Z_t) dt$ arises on the event $\{R_1 = \infty \}$. 

Hence, applying the strong Markov property successively at times $R_k$ and $D_{k-1}$, it follows that
\begin{align*}
& E_x \big[e^{\int_0^{\infty} 2V(Z_t) dt}\big] \leq 1+ \sum_{k \geq1}E_x\Big[ \prod_{1\leq n \leq k}\exp\Big\{ \int_{R_n}^{D_n} 2V(Z_t) dt\Big\} 1_{\{R_k < \infty\}}\Big]\\
&\leq 1+ \sum_{k \geq1}E_x\Big[ \prod_{1\leq n < k}\exp\Big\{ \int_{R_n}^{D_n} 2V(Z_t) dt\Big\} 1_{\{R_k < \infty\}}E_{X_{R_k}} \Big[ e^{\int_0^{T_{B_{\alpha r}}} 2V(Z_t) dt}\Big]\Big]\\
& \stackrel{\eqref{eq:gamma-alpha},\eqref{eq:tilt58}}{\leq} 1+ \sum_{k \geq1}E_x\Big[ \prod_{1\leq n < k}\exp\Big\{ \int_{R_n}^{D_n} 2V(Z_t) dt\Big\} 1_{\{R_{k-1} < \infty\}}\Big] \cdot \gamma(\alpha) \cdot  \Cr{c:exit} \stackrel{\text{}}{\leq} 1+ \sum_{k \geq 1} (\gamma(\alpha)  \Cr{c:exit})^k,
 \end{align*}
where the last step follows by a straightforward induction argument. In view of \eqref{eq:gamma-alpha},  $\gamma(\alpha) \to 0$ as $\alpha \to \infty$. Thus, picking $\alpha$ such that $ \gamma(\alpha) \leq \frac1{2\Cr{c:exit}}$, \eqref{eq:RW-exp1} follows with the choice $\varepsilon =  \frac{c}{\alpha^2} $, cf.~below~\eqref{eq:tilt58}.
\end{proof}

Next, we prove Lemma~\ref{L:GFF-conv}, which is employed within the proof of Proposition~\ref{P:uniformintGFF} for the computation of the limiting generating functionals in the Gaussian case.

 \begin{proof}[Proof of Lemma~\ref{L:GFF-conv}] Let $L'$ be such that $\text{supp}(f) \subset [-L',L']^d$.
Combining Lemma~\ref{L:tight10} in case $\varepsilon =0$ with the bound \eqref{eq:RW-exp2} (note to this effect that the condition \eqref{eq:Vcond} applies with the choice $V=V_N$ uniformly in $N \geq 1$ whenever $V$ satisfies \eqref{e:cond-V}), it follows that $\big\Vert  G_N^{V} f \big\Vert_{\infty} \leq c(L, L' ) \Vert f \Vert_{\infty}$ uniformly in $N$ for all $d \geq 3$, along with a similar bound for $(G_N^{V})^2$ when $d=3$.
The same conclusions apply to $G^{V}$, $(G^V)^2$.

We now show \eqref{eq:GFF-expl2}. Recalling \eqref{eq:g_Ndef}, rescaling time by $N^{-2}$ and using translation invariance of $P_x$, one rewrites for arbitrary $T>0$ and all $N \geq 1$, with $Z^N_t= \frac{1}{N} Z_{N^2t}$ the diffusively rescaled simple random walk (cf.~above \eqref{eq:q_t} for notation),
\begin{equation}
\label{a:GN1}
  \langle f, G_N^{V} f \rangle= a_N(T) + b_N(T) ,
  \end{equation}
  where
  $$
  a_N(T)= N^{-d} \int_{[0,1)^d \times [0,1)^d } dz_1 dz_2 \sum_{x \in \Z^d} f\textstyle(\frac xN + \frac{z_1}N ) \displaystyle E_0\Big[ \int_0^T dt \,e^{ \int_0^t V(Z_s^N) ds}  f\textstyle(\frac xN + \frac{z_2}N + Z_t^N )  \displaystyle \Big]
  $$
  and $b_N(T)$ is the corresponding expression with integral over $t$ ranging from $[T,\infty)$ instead. Note that by assumption on $f$, the sum over $x$ is effectively finite and restricted to $x$ satisfying $|x|_{\infty} \leq NL' $.
  Using the fact that the functions $f(\frac xN + \frac{z}N + \cdot )$ for $|x|_{\infty} \leq NL' $, $z \in [0,1)^d$, are equicontinuous and uniformly bounded and applying the invariance principle for $Z$ together with a straightforward Riemann sum argument, one concludes that for all $T>0$,
 \begin{equation}\label{e:a_N}
  a_N(T) \stackrel{N}{\longrightarrow} \int dz f(z) \int_0^{T}  W_z\big[ \textstyle e^{\int_0^t V(B_s) ds} f(B_t)\big] dt.
 \end{equation}
 To deal with $b_N(T)$ one applies the heat kernel estimate \eqref{eq:RW-exp2} (to $V=V_N$), thus effectively removing the tilt $e^{ \int_0^t V(Z_s^N) ds}$ and uses the on-diagonal estimate $P_0[Z_t=x] \leq ct^{-d/2}$ to obtain 
   \begin{equation}\label{e:b_N}
\sup_N  b_N(T)  \leq c(L') \Vert f \Vert_{\infty}^2 T^{-\frac{d-2}{2}}, \quad T>1.
 \end{equation}
As the right-hand side of \eqref{e:b_N} tends to $0$ as $T \to \infty$, \eqref{e:a_N} and \eqref{e:b_N} yield \eqref{eq:GFF-expl2}.
 
 To obtain \eqref{eq:GFF-expl3} (now assuming $d=3$), with $G^V(z,w)$ denoting the kernel of the potential operator $G^V$ in \eqref{GV}, one argues separately that
 \begin{align}
 & \int  f(z) g_N^{V}(z,z') \big(  g_N^{V}(z,z') - G^V(z,z')\big) f(w)dz dz' \stackrel{N}{\longrightarrow} 0, \label{CL1}\\
 & \int f(z) G(z,z) \big(  g_N^{V}(z,z') - G^V(z,z')\big) f(w) dz dz' \stackrel{N}{\longrightarrow} 0, \label{CL2}
 \end{align}
 from which~\eqref{eq:GFF-expl3} readily follows. We only show \eqref{CL1}; the case of \eqref{CL2} is handled similarly. Let $c_N$ refer to the absolute value of the restriction of the integral on the left-hand side of \eqref{CL1} to $|z-z'|< \frac1N$. Bounding the difference of Green's functions crudely by a sum, applying \eqref{eq:RW-exp2} along with its continuous counterpart and arguing similarly as in the display above \eqref{eq:g_eps6}, one deduces that $c_N \leq cN^{-1} $ for all $N \geq 1$, whence $c_N \to 0$ as $N \to \infty$. 
 
 Writing $c_N'$ for the corresponding quantity when $|z-z'|\geq \frac1N$, one simply bounds $g_N^{V}(z,z') \leq c$ in this regime (using again \eqref{eq:RW-exp2} to remove $V$; cf.~also \eqref{eq:g_Ndef} and note that $g(x,y) \leq c|x-y|^{-1}$ for all $x,y \in \Z^3$). Then the argument yielding \eqref{eq:GFF-expl2} implies that $c_N' \to 0$ as $N \to \infty$, and \eqref{CL1} follows.
\end{proof}

\section{Properties of the kernel $g_N^{\varepsilon}(\cdot,\cdot)$}\label{A:Green}

We supply here various proofs which were omitted in the main body dealing with $g_N^{\varepsilon}$ defined in \eqref{eq:uniformint32}. We first give the proof of Lemma~\ref{L:g_eps1}.

\begin{proof}[Proof of Lemma~\ref{L:g_eps1}] Since $\rho^{\varepsilon}(\cdot)$ is supported on the ball of radius $\varepsilon$, \eqref{eq:rho_eps} implies that
\begin{equation}
\label{eq:g_eps2}
\text{for all $N \geq \varepsilon^{-1}$, $\rho_N^{\varepsilon}(z,z') =0$ unless $|z-z'| \leq 2 \varepsilon$.}
\end{equation}
In particular, combining \eqref{eq:g_eps2} and the pointwise estimate $\rho_N^{\varepsilon}(z,z') \leq \varepsilon^{-d}\Vert \rho \Vert_{\infty}$, which is readily obtained from~\eqref{eq:rho_eps}, one deduces that for all $N \geq \varepsilon^{-1}$ and $z\in \R^d$,
\begin{equation}
\label{eq:g_eps3}
\Vert \rho_N^{\varepsilon}(z,\cdot) \Vert_{\infty} = \int  \rho_N^{\varepsilon}(z,z') dz' \leq \varepsilon^{-d}\Vert \rho \Vert_{\infty} \text{vol}(B(z,2\varepsilon)) \leq c \Vert \rho \Vert_{\infty}.
\end{equation}
Turning to \eqref{eq:g_eps1}, we first suppose that $|z-z'| \geq 10 \varepsilon$. Going back to the definition of $g_N^{\varepsilon}(z,z')$, it follows using \eqref{eq:g_eps2} that the double integral on the right-hand side of \eqref{eq:uniformint32} has support contained in the set $S^{\varepsilon}$ comprising all $(v,w) \in \R^{2d}$ such that $\big||v-w|-|z-z'|\big| \leq 4 \varepsilon$. Thus, for $z,z'$ as considered here, one has that any $(v,w) \in S^{\varepsilon}$ satisfies $ \frac{|v-w|}{|z-z'|} \geq c$ and moreover $|v-w| > 5\varepsilon $. The latter yields in particular that $|\lfloor Nv \rfloor- \lfloor Nw \rfloor| \geq N|v-w|-2 >1$ for all $N \geq \varepsilon^{-1}$. In view of \eqref{eq:g_Ndef} and using the classical estimate $g(x,y) \leq \frac{c}{|x-y|^{d-2} \vee 1}$ valid for all $x,y \in\Z^d$, see for instance Theorem 1.5.4 in \cite{La91}, one readily infers from this that
$$
g_N(v,w) \leq c |z-z'|^{-(d-2)}, \text{ for all $x,y \in S^{\varepsilon}$ and $N \geq \varepsilon^{-1}$}.
$$
Substituting this bound into \eqref{eq:uniformint32} and applying \eqref{eq:g_eps3} (twice) then gives \eqref{eq:g_eps1}.

We now assume that $|z-z'| \leq 10 \varepsilon$. In that case \eqref{eq:g_eps2} implies that the relevant $v,w$ in \eqref{eq:uniformint32} satisfy $|v-w| \leq \Cl[c]{c:epsilonradius}\varepsilon$ for some $\Cr{c:epsilonradius}>1$ whenever $N \geq \varepsilon^{-1}$. For such $N$, using the pointwise bound on $\rho_N^{\varepsilon}$ (see above \eqref{eq:g_eps3}), one estimates the expression in \eqref{eq:uniformint32} as
\begin{equation}
\label{eq:g_eps4}
g_N^{\varepsilon}(z,z') \leq \big(\sup_z\Vert \rho_N^{\varepsilon}(z,\cdot) \Vert_{\infty}\big)  \cdot \varepsilon^{-d}\Vert \rho \Vert_{\infty} \cdot \sup_{v} \int_{B(v, \Cr{c:epsilonradius}\varepsilon)} g_N(v,w) dw.
\end{equation}
The last integral is bounded by considering the cases $|v-w| \leq \frac{1}{N}$ and $\frac1N \leq |v-w| \leq \Cr{c:epsilonradius}\varepsilon$ separately (note that this is well-defined as $\frac1N \leq \varepsilon$) and bounding $g_N(\cdot,\cdot) \leq cN$ in the former case while using that $g_N(v,w) \leq \frac{c'}{|v-w|^{d-2}}$ in the latter, thus yielding for all $v \in \R^d$,
$$
\int_{B(v, \Cr{c:epsilonradius}\varepsilon)} g_N(v,w) dw \leq c \int_{|z| \leq \frac 1N} N dz + c' \int_{|z| \leq  \Cr{c:epsilonradius}\varepsilon} \frac {dz}{|z|^{d-2}} \leq c''(N^{1-d}+ \varepsilon^2 ).
$$
Upon being multiplied by $\varepsilon^{-d}$ and uniformly in $N \geq \varepsilon^{-1}$ the first of these terms is of order $\varepsilon^{-1}$ while the second one is of order $\varepsilon^{2-d}$, which is larger as $d \geq 3$. Feeding the resulting bound into \eqref{eq:g_eps4} and using \eqref{eq:g_eps3} is then seen to imply that $g_N^{\varepsilon}(z,z') \leq c' \Vert \rho \Vert_{\infty}^2 \varepsilon^{2-d} $ for $N \geq c\varepsilon^{-1}$, as desired.
\end{proof}

We continue with the

 \begin{proof}[Proof of Lemma~\ref{L2.1}] We consider the case $h_N^{\varepsilon} = g_N^{\varepsilon}$ first and discuss how to adapt the following arguments to the case of $ \tilde g_N^{\varepsilon}$ at the end of the proof.
 Let $G^{\varepsilon}= \rho^{\varepsilon} \ast G \ast \rho^{\varepsilon}$ where $\ast$ denotes convolution on $\R^d$, i.e. $(f\ast g) (x) =\int f(x-y)g(y) dy$ for suitable $f,g$ (note that $G^{\varepsilon}$ is well-defined since $G$ acts as a convolution operator on $ C_{0}^{\infty}(\R^d) \ni \rho^{\varepsilon}\ast \rho^{\varepsilon}$). The function $y \mapsto G(x,y)$ being harmonic for all $ y\in \R^d \setminus \{x\}$, one readily deduces using the mean-value property and the fact that $\rho^{\varepsilon}(\cdot)$ is supported on $B(0,\varepsilon)$ that 
 \begin{equation}
 \label{eq:L2.101.0}
 G^{\varepsilon}(y,z)= G(y,z) \text{ for all $|y-z| > 2 \varepsilon$.}
 \end{equation}
 Hence it suffices to show \eqref{eq:L2.100} with $G^{\varepsilon}$ in place of $G$. We introduce the intermediate kernels $(g_N^{\varepsilon})'$, $(g_N^{\varepsilon})''$, respectively defined by replacing one or both occurrences of $\rho_N^{\varepsilon}$ in \eqref{eq:uniformint32} by $\rho^{\varepsilon}$. With these definitions, one has
\begin{equation}
 \label{eq:L2.101}
\big| (g_N^{\varepsilon})''(y,z)-G^{\varepsilon}(y,z)\big| \leq  \iint \rho^{\varepsilon}(y-y') |g_N(y',z')- G(y',z')|\rho^{\varepsilon}(z-z') dy'dz'
\end{equation}
 In view of \eqref{eq:g_Ndef} and by Theorem 1.5.4 in \cite{La91}, one knows that
 \begin{equation*}
 \sup_{|y'-z'|> \varepsilon}|g_N(y',z')- G(y',z')| \stackrel{N}{\longrightarrow} 0.
 \end{equation*}
 Thus, returning to \eqref{eq:L2.101}, observing that $|z-y|> 3\varepsilon$ and $z-z', y-y' \in \text{supp}(\rho^{\varepsilon})$ imply that $|y'-z'|> \varepsilon$, one readily deduces that
 \begin{equation}
 \label{eq:L2.102}
\lim_N \sup_{|y-z|> 3\varepsilon}\big| (g_N^{\varepsilon})''(y,z)-G^{\varepsilon}(y,z)\big| =0.
\end{equation}
 Next, observe that
 \begin{equation}
  \label{eq:L2.103}
\big| (g_N^{\varepsilon})'(y,z)-(g_N^{\varepsilon})''(y,z)\big|  \leq  \int \Big(\int \rho^{\varepsilon}(y-y') g_N(y',z') dy' \Big)\big|  \rho_N^{\varepsilon}(z,z')- \rho^{\varepsilon}(z-z')\big| dz'
 \end{equation}
 By \eqref{eq:rho_eps} the integrand in \eqref{eq:L2.103} (as a function of $z'$ alone) tends to $0$ pointwise as $N \to \infty$ for all $z'$. Moreover for any $f \in C_{0}^{\infty}(\R^d)$ with $\text{supp}(f) \subset B(0,R)$, denoting by $G_N$ the convolution operator with kernel $g_N$, one has that 
 \begin{equation}
   \label{eq:L2.104}
 \sup_{N \geq 1} \Vert G_N V \Vert_{\infty} \leq c(R) \Vert V \Vert_{\infty}, \text{ (for all $d \geq 3$)}
 \end{equation}
 as
 $$
 |G_N V|(x) \leq \int g_N(x,y) |V(y)| dy \leq c\Vert V \Vert_{\infty}N^{1-d} +  \int_{|y|> \frac1N} g_N(x,y) |V(y)| dy \leq cR^2 \Vert V \Vert_{\infty}.
 $$
 Going back to \eqref{eq:L2.103} and using \eqref{eq:L2.104}, letting $R= \text{diam}(\text{supp}(\rho^{\varepsilon} ))$, the integrand on the right-hand side is thus bounded uniformly in $N$ (and $z$) by
 $$
 c(R)\Vert \rho^{\varepsilon} \Vert_{\infty} \max_{v \in B(z',1)} \rho^{\varepsilon}(z-v) \in L^1(dz')
 $$
 and it follows by dominated convergence that
 \begin{equation}
  \label{eq:L2.105}
\lim_N \sup_{|y-z|> 3\varepsilon}\big| (g_N^{\varepsilon})'(y,z)-h_N^{\varepsilon}(y,z)\big| =0
 \end{equation}
 for $h_N^{\varepsilon}= (g_N^{\varepsilon})'' $. The conclusion \eqref{eq:L2.105} continues to hold if one chooses $h_N^{\varepsilon}= g_N^{\varepsilon}$ instead, for then $\rho^{\varepsilon}(y-y')$ on the right-hand side of \eqref{eq:L2.103} must be replaced by $\rho_N^{\varepsilon}(y,y')$ and the rest of the argument still applies since $\sup_{N \geq 1,y \in \R^d} \Vert \rho_N^{\varepsilon}(y,\cdot) \Vert_{\infty}< \infty$, as required to obtain a uniform upper bound in \eqref{eq:L2.104}. Together, \eqref{eq:L2.105}, \eqref{eq:L2.102} and \eqref{eq:L2.101.0} yield \eqref{eq:L2.100} for $h_N^{\varepsilon}=g_N^{\varepsilon}$.
 
 To deal with $h_N^{\varepsilon}=\tilde g_N^{\varepsilon}$, one considers $\tilde{G}^{\varepsilon}= G\ast \rho^{\varepsilon}$ instead of $G^{\varepsilon}$ (in particular~\eqref{eq:L2.101.0} continues to hold) and introduces $(\tilde{g}_N^{\varepsilon})''$ as in~\eqref{eq:L2.3} but with $\rho^{\varepsilon}$ in place of (the sole occurrence of) $\rho_N^{\varepsilon}$. One then separately bounds $|(\tilde{g}_N^{\varepsilon})''-\tilde{G}^{\varepsilon}|$ and $|(\tilde{g}_N^{\varepsilon})''-\tilde{g}_N^{\varepsilon}|$ much as in \eqref{eq:L2.101} and \eqref{eq:L2.103}, respectively, but the details are simpler due to the absence of the integral over $dy'$. This completes the proof.
 \end{proof}

\section{
Some Gaussian results}\label{A:Gauss}

In this section we prove Lemma~\ref{l3}, which is a purely Gaussian claim used in the course of proving Theorem~\ref{T:limit_2}. We start with a preparatory result. For $\delta > 0$ 
and  $z \in \R^3$, define $z_\delta$ to be the unique element $x \in \delta \Z^3$ such that $z \in x + [0,\frac1\delta)^3$. Recalling $\Psi^{\varepsilon}$ from \eqref{eq:Psi-epsilon}, let $\Psi_{\delta}^{\varepsilon}$ be the Gaussian field defined by $\Psi_{\delta}^{\varepsilon}(z)= \Psi^{\varepsilon}(z_{\delta})$, $z \in \R^3$. The following is tailored to our purposes. 
\begin{lem}\label{L:Gauss} For all $\varepsilon > 0$, $V \in C_{0}^{\infty}(\R^3)$ and $k=1,2$,
\begin{equation}
\label{eq:gaussapprox}
\langle :(\Psi_{\delta}^{\varepsilon})^k:, V \rangle \stackrel{L^2(P^{\Sigma})}{\longrightarrow} \langle :(\Psi^{\varepsilon})^k:, V \rangle \text{ as $\delta \downarrow 0$.}
\end{equation}
\end{lem}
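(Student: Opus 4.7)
The plan is to exploit the Gaussian nature of $\Psi$ in order to reduce \eqref{eq:gaussapprox} to a convergence statement for deterministic integrals involving the covariance kernel of $\Psi^{\varepsilon}$, and then to conclude by continuity of this kernel together with dominated convergence.

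Let $G^{\varepsilon}(y,z) \stackrel{\text{def.}}{=} E^{\Sigma}[\Psi^{\varepsilon}(y)\Psi^{\varepsilon}(z)]$ for $y,z \in \R^3$. By \eqref{eq:Psi-epsilon} and \eqref{eq:PSigma}, one has $G^{\varepsilon} = \rho^{\varepsilon}\ast G_{\Sigma}\ast \rho^{\varepsilon}$, which is a bounded continuous function on $\R^3 \times \R^3$ (using that $G_{\Sigma}(\cdot,\cdot)$ is locally integrable and the smoothness and compact support of $\rho^{\varepsilon}$). In particular, for any compact $K \subset \R^3$, there exists $\Cl[c]{c:G-eps}(\varepsilon,K) \in (0,\infty)$ with
\begin{equation}\label{eq:G-eps-bound}
\sup_{y,z \in K'} G^{\varepsilon}(y,z) \leq \Cr{c:G-eps}, \quad K' = K + [-1,1]^3.
\end{equation}

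First I treat $k=1$, where $:(\Psi^{\varepsilon})^k: \, = \Psi^{\varepsilon}$ since $\Psi^{\varepsilon}$ is centered. The random variable $F_{\delta}\stackrel{\text{def.}}{=}\int V(z)(\Psi^{\varepsilon}_{\delta}(z) - \Psi^{\varepsilon}(z)) dz$ is centered Gaussian and, by Fubini and \eqref{eq:PSigma},
\begin{equation}\label{eq:Gauss-k1}
E^{\Sigma}[F_{\delta}^2] = \iint V(z)V(z') \big[ G^{\varepsilon}(z_{\delta},z'_{\delta}) - G^{\varepsilon}(z_{\delta},z') - G^{\varepsilon}(z,z'_{\delta}) + G^{\varepsilon}(z,z')\big] \, dz\, dz'.
\end{equation}
Letting $K=\textnormal{supp}(V)$, for all $\delta < 1$ one has $z_{\delta} \in K'$ whenever $z \in K$, and the integrand in \eqref{eq:Gauss-k1} is bounded in absolute value by $4\Cr{c:G-eps} |V(z)||V(z')| \in L^1(dzdz')$ on account of \eqref{eq:G-eps-bound}. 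Since $G^{\varepsilon}$ is continuous and $z_{\delta} \to z$ as $\delta \downarrow 0$ for every $z \in \R^3$, the integrand tends to $0$ pointwise, and dominated convergence yields $E^{\Sigma}[F_{\delta}^2] \to 0$, which is \eqref{eq:gaussapprox} for $k=1$.

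For $k=2$, I compute the $L^2(P^{\Sigma})$-norm of $H_{\delta} \stackrel{\text{def.}}{=} \int V(z)(:\Psi^{\varepsilon}_{\delta}(z)^2: - :\Psi^{\varepsilon}(z)^2:)dz$ using the identity $E^{\Sigma}[:X^2:\,:Y^2:] = 2\, \textnormal{cov}^{\Sigma}(X,Y)^2$, valid for any centered jointly Gaussian pair $(X,Y)$ (an instance of Wick's formula). Expanding and using Fubini,
\begin{equation}\label{eq:Gauss-k2}
E^{\Sigma}[H_{\delta}^2] = 2 \iint V(z)V(z') \big[ G^{\varepsilon}(z_{\delta},z'_{\delta})^2 - 2 G^{\varepsilon}(z_{\delta},z')^2 + G^{\varepsilon}(z,z')^2 \big] \, dz\, dz'.
\end{equation}
The integrand in \eqref{eq:Gauss-k2} is again bounded in absolute value by $4\Cr{c:G-eps}^2 |V(z)||V(z')|$ uniformly in $\delta \in (0,1)$, and by continuity of $G^{\varepsilon}$ tends to $0$ pointwise as $\delta \downarrow 0$. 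Dominated convergence then gives $E^{\Sigma}[H_{\delta}^2] \to 0$, which is \eqref{eq:gaussapprox} for $k=2$ and completes the proof.

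The entire argument is quite direct; if any step could be called an obstacle, it is merely the verification that $G^{\varepsilon}$ is continuous and bounded on compact sets, but this follows immediately from the double smoothing $G^{\varepsilon}= \rho^{\varepsilon}\ast G_{\Sigma}\ast \rho^{\varepsilon}$ together with the local integrability of $G_{\Sigma}$ in $d=3$.
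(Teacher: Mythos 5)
Your proof is correct, and it takes a genuinely different (and somewhat more self-contained) route than the paper's. The paper first invokes the equivalence of $L^p$-norms on a fixed Wiener chaos (Theorem~3.50 in \cite{MR1474726}) to reduce the claim to $L^1$-convergence, then applies Cauchy-Schwarz pointwise in $z$ and estimates $E^{\Sigma}\big[(:(\Psi^{\varepsilon})^2(z_{\delta})-:(\Psi^{\varepsilon})^2(z):)^2\big]$ using translation invariance of $G^{\varepsilon}_{\Sigma}$, showing this second moment tends to $0$ uniformly in $z$. You instead compute $\|H_{\delta}\|_{L^2(P^{\Sigma})}^2$ directly via Wick's formula and conclude by dominated convergence. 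Both arguments rest on the same identity $E^{\Sigma}[:X^2:\,:Y^2:]=2\,\mathrm{cov}(X,Y)^2$, but your route avoids the chaos-norm equivalence theorem entirely, doesn't need translation invariance (only local boundedness and continuity of $G^{\varepsilon}$), and treats $k=1$ explicitly whereas the paper just remarks it is simpler. One small presentational point: in \eqref{eq:Gauss-k2} you have already implicitly symmetrized the integrand (writing $-2G^{\varepsilon}(z_{\delta},z')^2$ instead of $-G^{\varepsilon}(z_{\delta},z')^2-G^{\varepsilon}(z,z'_{\delta})^2$); this uses the $z\leftrightarrow z'$ symmetry of the outer integral together with the symmetry of $G^{\varepsilon}$, which you should state since the bracket as written is not literally the pointwise expectation of the cross-term.
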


\begin{proof}
We only show \eqref{eq:gaussapprox} for $k=2$. The case $k=1$ is simpler. By Theorem~3.50 in \cite{MR1474726}, it is enough to show convergence in $L^1$. By Cauchy-Schwarz,
\begin{align}
\label{eq:app10}
\left\Vert \langle :(\Psi_{\delta}^{\varepsilon})^2:, V \rangle - \langle :(\Psi^{\varepsilon})^2:, V \rangle \right\Vert_{L^1(P^{\Sigma})}& \leq \int |V(z)| E^{\Sigma}\big[ ( :(\Psi^{\varepsilon})^2(z_{\delta})- (\Psi^{\varepsilon})^2(z):)^2\big]^{\frac12}dz.
\end{align}
By \cite{MR1474726}, Theorem 3.9, p.26, one knows that for all $V, W \in C_{0}^{\infty}(\R^3)$,
\begin{equation}
\label{eq:app10.0}
E^{\Sigma}[:\langle\psi, V\rangle^2: :\langle\psi, W\rangle^2:]= 2\Big(\iint V(z)G_{\Sigma}(z,z')W(z')dz dz'\Big)^2.
\end{equation}
Using this fact and recalling that $\Psi^{\varepsilon}(z)= \langle \Psi,\rho^{\varepsilon,z} \rangle $ for any $z \in \R^3$, it follows upon expanding the square that
\begin{equation}
\label{eq:app11}
E^{\Sigma}\big[ ( :(\Psi^{\varepsilon})^2(z_{\delta})- (\Psi^{\varepsilon})^2(z):)^2\big]=4\big(G^\varepsilon_{\Sigma}(0)^2-G^\varepsilon_{\Sigma}(z-z_{\delta})^2\big) \leq c\varepsilon^{-1}(G^\varepsilon_{\Sigma}(0)-G^\varepsilon_{\Sigma}(z-z_{\delta})\big),
\end{equation}
where $G^\varepsilon_{\Sigma}(z-w)= \langle \rho^{\varepsilon,z}, G_{\Sigma}\rho^{\varepsilon} \rangle \ (= E^{\Sigma}[\Psi^{\varepsilon}(z)\Psi^{\varepsilon}(w)])$ for $z,w \in \R^3$. One readily argues using the regularity assumption on $\rho$ that $G^\varepsilon_{\Sigma}(\cdot)$ is smooth on $\R^3$. Going back to \eqref{eq:app11}, it follows that $\sup_z E^{\Sigma}\big[ ( :(\Psi^{\varepsilon})^2(z_{\delta})- (\Psi^{\varepsilon})^2(z):)^2\big] \to 0$ as $\delta \to 0$. Together with \eqref{eq:app10} and since $V$ has compact support, this yields \eqref{eq:gaussapprox}.
\end{proof}

We conclude with the 

\begin{proof}[Proof of Lemma~\ref{l3}]
Note that for all $\delta \in (0,1)$, the random variable $ :\xi^{\varepsilon}_{\delta}: = \langle :(\Psi_{\delta}^{\varepsilon})^2:, V \rangle+ \langle \Psi_{\delta}^{\varepsilon}, W \rangle$ is a polynomial of degree $2$ in the variables $\{ \Psi^{\varepsilon}(z): z \in K_{\delta}\}$ where $K_\delta= \delta\Z^d \cap ((\text{supp}(V)\cup \text{supp}(W)) + [-1,1]^d)$, a finite set. That is,~$:\xi^{\varepsilon}_{\delta}:$ is an element of $\mathcal{P}_2(H)$ with $H=L^2(P^{\Sigma})$ in the notation of \cite{MR1474726}, Chap.II, p.17.
Thus, \eqref{eq:gaussapprox} implies that $:\xi^{\varepsilon}:\in \overline{\mathcal{P}}_2(H)$, its closure in $H$ (the chaos of order 2 in $H$). 
This in turn yields together with \eqref{eq:GFFcont_wick1} that $:\xi: \in \overline{\mathcal{P}}_2(H)$. It then follows from \cite{MR1474726}, Thm.~6.7, p.82 that the family $\{ E^{\Sigma}[ e^{:\chi :}]  : \chi \in \{ \xi, \xi^{\varepsilon}, \varepsilon \in (0,1) \}\}$ is uniformly bounded. Combining this fact, \eqref{eq:GFFcont_wick1} and an argument similar to \eqref{eq:scalinglimit17}-\eqref{eq:scalinglimit18} but for the quantity $|E^{\Sigma}[e^{:\xi^{\varepsilon}:}]- E^{\Sigma}[e^{:\xi:}]|$, \eqref{eq:scalinglimit15} follows.
\end{proof}

\textbf{Declarations.} \textit{Competing interests:} The authors have no competing interests to declare that are relevant to the content of this article. \textit{Data:} This article has no associated data.

{\footnotesize
\bibliography{rodriguez}
\bibliographystyle{abbrv}}
\end{document}